\title[Free and based path groupoids]{Free and based path groupoids}
\author{Andr\'es \'Angel}
\address{Departamento de Matem\'aticas, Universidad de los Andes, Carrera 1 N. 18A - 10, Bogot\'a, Colombia}
\email{ja.angel908@uniandes.edu.co}
\urladdr{}
\address{
	 Departamento de Matem\'aticas y Estad\'istica\\
       Universidad del Norte\\ 
       Km. 5 via Puerto Colombia, Barranquilla, Colombia}    
       \email{ajangel@uninorte.edu.co}
\author{Hellen Colman}
\address{Department of Mathematics, Wright College,
 4300 N.\ Narragansett Avenue, Chicago, IL 60634 USA}
\email{hcolman@ccc.edu}
\urladdr{https://www.hellencolman.com/math}
\subjclass[2010]{55P35, 18B40, 58E40 (Primary);  55R91, 58D19 (Secondary)}
\theoremstyle{definition}
\newtheorem{defn}{Definition}[section]
\newtheorem{exam}[defn]{Example}
\newtheorem{remark}[defn]{Remark}
\theoremstyle{plain}
\newtheorem{thm}[defn]{Theorem}
\newtheorem{prop}[defn]{Proposition}
\newcommand{\bdm}{\begin{displaymath}}
\newcommand{\edm}{\end{displaymath}}
\newcommand{\X}{{\mathcal X}}
\newcommand{\Gd}{{\mathcal G}}
\newcommand{\Grd}{\mathsf{Gpd}}
\newcommand{\Top}{\mathsf{Top}}
\newcommand{\Grp}{\mathsf{Grp}}
\newcommand{\TG}{\mathsf{TopG}}
\newcommand{\MTG}{\mathsf{MTopG}}
\newcommand{\TrG}{\mathsf{TrG}}
    \newcommand{\MTrG}{\mathsf{MTrG}}
    \newcommand{\B}{\mathbf{B}}
\newcommand{\cK}{{\mathcal K}}
\newcommand{\simT}{~ {\sim}_{T} ~}
\newcommand{\e}{\epsilon}
\newcommand{\cL}{\mathcal L}
\newcommand{\Int}{\mathcal I}
\newcommand{\cH}{\mathcal H}
\newcommand{\cP}{{\mathcal P}}
\newcommand{\cJ}{{\mathcal J}}
\newcommand{\Y}{{\mathcal Y}}
\newcommand{\inv}{^{-1}}
\newcommand{\id}{\operatorname{id}}
\newcommand{\ev}{\operatorname{ev}}
\newcommand{\mmm}{\mbox{\rm Map}}
\newcommand{\colim}{\operatornamewithlimits{colim}}
\begin{document}

\begin{abstract} We give an explicit description of the free path and loop groupoids in the Morita bicategory of translation topological groupoids. We prove that the free path groupoid of a discrete group acting properly on a topological space $X$ is a translation groupoid given by the same group acting on the topological path space $X^I$.  We give a detailed description of based path and loop groupoids and show that both are equivalent to topological spaces. We also establish the notion of homotopy and fibration in this context.
\end{abstract}

\maketitle

\section{Introduction}
Our aim is to give an explicit description of the path object in the bicategory of translation topological groupoids. Our main application will be in the setting of orbifolds as groupoids.

We adopt the model developed by Moerdijk and Pronk \cite{Pronk1997} to describe orbifolds in terms of groupoids.  Essentially an orbifold is a Morita equivalence class of groupoids of a certain type, that we will call orbifold groupoids.

In this spirit, the right notion of morphism between orbifold groupoids is that of a generalized map. These generalized maps arise as morphisms in the bicategory of topological groupoids, functors and natural transformations when inverting the essential equivalences \cite{Pronk1996}. 

All orbifolds can be represented by a groupoid given by a certain type of action of a group $G$ on a topological space $X$. This representation $G\ltimes X$ is called translation groupoid. In particular we will be interested in {\em developable} orbifolds defined by a translation groupoid given by a discrete group acting properly on a space.

For these orbifolds, we use their groupoid characterization to obtain a description of the generalized maps from the interval to the orbifold  as a translation groupoid. We prove that the free path groupoid of the translation groupoid $G\ltimes X$ is the translation groupoid $G\ltimes X^I$. In fact we describe three different approaches resulting in three characterizations of the path groupoid: as a colimit of $G$-paths, as a groupoid of multiple $G$-paths and as a translation groupoid $G\ltimes X^I$. We prove that the three groupoids are equivalent. 

We show that this construction of the path groupoid is functorial and invariant under Morita equivalence.

The pullback along the diagonal of this model gives us as a particular case, the free loop groupoid which coincides with the descriptions given by Lupercio and Uribe \cite{Lupercio}, Adem, Leida and  Ruan \cite{Adem} and Noohi  \cite{Noohi1} in various contexts.

Moreover, we use this model to calculate the based groupoid of paths between two points. We prove that this groupoid is actually equivalent to a topological space.

Using our description of the path groupoid, we provide an explicit characterization for a homotopy between two generalized maps, as well as a definition for orbifold fibrations. We prove that the evaluation map  is both a groupoid homotopy and a groupoid fibration.

We organize the paper in the following manner.  In Section 2 we present some basic definitions and constructions for topological groupoids. We define translation groupoids and introduce the bicategory of translation groupoids resulting from inverting the essential equivalences. Section 3 introduces the model for orbifolds as groupoids that gives the setting for the construction of the path groupoid in the next section. Section 4 is devoted to the construction of the free path groupoid. We give here an explicit equivalence between all models for the path groupoid. We prove that this construction is functorial and invariant under Morita equivalence.
Section 5 provides a detailed description of the based path and loop groupoids and describe some examples. Section 6 concerns the characterization of the homotopy between generalized maps. In section 7 we provide a definition of groupoid fibration and prove that the evaluation morphism is a groupoid fibration.

\section{Context}
\subsection{Topological groupoids}

A  {\it topological groupoid} $\Gd$ is a groupoid object in the category $\Top$ of topological spaces and continuous maps.
Our notation for groupoids is that $G_0$ is the space of objects and $G_1$ is the space of arrows, with source and target maps
$s,t:G_1\to G_0$, multiplication $m:G_1 \times_{G_0} G_1\to G_1$, inversion $i:G_1\to G_1$, and object inclusion
$u:G_0\hookrightarrow G_1$.

The set of arrows from $x$ to $y$ is denoted $G(x,y)=\{ g\in G_1 | s(g)=x \mbox{ and } t(g)=y\}$. The set of arrows from $x$ to itself, $G(x,x)$, is a group called the {\it isotropy} group of $\Gd$ at $x$ and denoted $G_{x}$.

A {\it strict morphism} $\phi: \cK \to \Gd$ of groupoids is a functor given by two continuous  maps $\phi: K_1 \to G_1$ and $\phi: K_0 \to G_0$ that together commute with all the structure maps of the groupoids $\cK$ and $\Gd$.

A {\it natural transformation} $T : \phi \Rightarrow \psi$ between two morphisms $\phi, \psi: \cK \to \Gd$ is a continuous map $T: K_{0} \to G_1$ with $T(x):\phi(x)\to\psi(x)$ such that for any arrow $h:x \rightarrow y$ in $K_1$,  the identity $\psi(h)T(x)=T(y)\phi(h)$ holds. Since we are in a topological groupoid and inversion is continuous, we also have a natural transformation $T^{-1} : \psi  \Rightarrow \phi$ and write $\phi\simT\psi$.

Topological groupoids, strict morphisms and natural transformations form a 2-category that we denote $\TG$.

A strict morphism $\e: \cK \to \Gd$ of topological groupoids is an {\it essential equivalence}  if
\begin{itemize}
\item[(i)] $\e$ is essentially surjective in the sense that \[s\pi_{1}:G_1\times^t_{G_0}K_0\rightarrow G_0\] is an open surjection
where $G_1\times^t_{G_0}K_0$ is the pullback along the target $t: G_1\to G_0$;
\item[(ii)] $\e$ is fully faithful in the sense that $K_1$ is  the following pullback of topological spaces:
\[\xymatrix{
K_1 \ar[r]^{\e} \ar[d]_{(s,t)}& G_1 \ar[d]^{(s,t)} \\
K_0\times K_0 \ar[r]^{\e \times \e} & G_0\times G_0}\]
\end{itemize}

Note that if there exists a functor $\delta: \Gd \to \cK$ with natural transformations $\eta : \id_{\Gd} \Rightarrow  \e\circ\delta  $ and $\nu : \delta\circ\e \Rightarrow \id_{\cK}$ in $\TG$ the functor $\e$ is essentially surjective, indeed, $s \pi_1$ has a section defined by $(\eta_x,\delta(x)) : G_0 \rightarrow G_1\times^t_{G_0}K_0$ which implies that it is open and surjective. $\e$ is fully faithful because the map $K_1 \rightarrow \cK_0\times \cK_0 \times_{\Gd_0\times \Gd_0}  \Gd_1$ has an inverse defined by $(x,y,h) \rightarrow \nu_y \circ \delta (h) \circ \nu_x^{-1}$.

An essential equivalence $\e: \cK \to \Gd$ does not generally have an inverse functor $\delta: \Gd \to \cK$ such that $\e\circ\delta \simT \id_{\Gd}$ and $\delta\circ\e \sim_{T'} \id_{\cK}$ in $\TG$. The functor $\delta$ exists by the axiom of choice but in general it is not {\it continuous}.

\begin{defn}
Let $\psi:\cK\to \Gd$ and $\phi:\cL\to \Gd$ be strict morphisms. The \textit{groupoid pullback} $\cP=\cK\times_{\Gd}\cL$
is the topological groupoid whose space of objects is $P_0=K_0\times^t_{G_0}G_1\times^s_{G_0} L_0$ and space of arrows is 
$P_1=K_1\times^t_{G_0}G_1\times^s_{G_0} L_1$. Source and target maps are given by
$s(k,g,l)=(s(k), \psi(k)^{-1} g\phi (l), s(l)) $ and $t(k,g,l)=(t(k), g, t(l) )$.
 There is a square of morphisms and a natural transformation $T$ that makes the following diagram commutative

$$\xymatrix{\cK\times_{\Gd} \cL\ar[r]^-{\pi_{1}}\ar[d]_{\pi_{2}}="0"&\cK\ar[d]^{\psi}="1"\\ \cL\ar[r]_{\phi}&\Gd
\ar@{}"0" ;"1"_{\sim_{T}}
}$$
and is universal with this property.

\end{defn}

\begin{defn}
The groupoids $\cK$ and $\Gd$ are {\it Morita equivalent} if there exists a groupoid $\cL$ and a span 
\[\cK\overset{\sigma}{\gets}\cL\overset{\e}{\to}\Gd\] where $\e$ and $\sigma$ are essential equivalences. We write $\Gd\sim_M \cK$.
\end{defn}
The proof that a Morita equivalence is an equivalence relation is based in the groupoid pullback defined above.

A  {\it generalized map} $(\e,\phi)$ from $\cK$ to $\Gd$ is a span
$\cK\overset{\e}{\gets}\cJ\overset{\phi}{\to}\Gd$
such that $\e$ is an essential equivalence.
Two generalized maps $\cK\overset{\e}{\gets}\cJ\overset{\phi}{\to}\Gd$ and $\cK\overset{\e'}{\gets}\cJ'\overset{\phi'}{\to}\Gd$ are ${\it equivalent}$ if there exists a diagram
$$\xymatrix{ &
{\cJ}\ar[dr]^{\phi}="0" \ar[dl]_{\e }="2"&\\
{\cK}&{\cL} \ar[u]_{u} \ar[d]^{v}&{\Gd}\\
&{\cJ'}\ar[ul]^{\e'}="3" \ar[ur]_{\phi'}="1"&
\ar@{}"0";"1"|(.4){\,}="7"
\ar@{}"0";"1"|(.6){\,}="8"
\ar@{}"7" ;"8"_{\sim_{T'}}
\ar@{}"2";"3"|(.4){\,}="5"
\ar@{}"2";"3"|(.6){\,}="6"
\ar@{}"5" ;"6"^{\simT}
}
$$
which is commutative up to natural transformations and where $\cL$ is a topological groupoid, and $u$ and $v$ are essential  equivalences.


\subsection{The Morita bicategory of topological groupoids $\MTG$}

Consider the class of arrows $E$ given by the essential equivalences in the 2-category  $\TG$. It was proven by Pronk in \cite{Pronk1996, Pronk:2010} that $E$ satisfies the conditions to admit a bicalculus of fractions. The bicategory of fractions ${\TG}{(E\inv})$ obtained by formally inverting the essential equivalences is what we call the {\it Morita bicategory of topological groupoids} and we denote $\MTG$.

The explicit description of the bicategory $\MTG$ is as follows:

\begin{itemize}
\item Objects are topological groupoids $\Gd$.
\item A 1-morphism from $\cK$ to $\Gd$ is a  {\it generalized map}
\[\cK\overset{\e}{\gets}\cJ\overset{\phi}{\to}\Gd\]
such that $\e$ is an essential equivalence.
\item A 2-morphism from $\cK\overset{\e}{\gets}\cJ\overset{\phi}{\to}\Gd$ to $\cK\overset{\e'}{\gets}\cJ'\overset{\phi'}{\to}\Gd$ is given by a class of diagrams:
$$
\xymatrix{ &
{\cJ}\ar[dr]^{\phi}="0" \ar[dl]_{\e }="2"&\\
{\cK}&{\cL} \ar[u]_{u} \ar[d]^{v}&{\Gd}\\
&{\cJ'}\ar[ul]^{\e'}="3" \ar[ur]_{\phi'}="1"&
\ar@{}"0";"1"|(.4){\,}="7"
\ar@{}"0";"1"|(.6){\,}="8"
\ar@{}"7" ;"8"_{\sim_{T'}}
\ar@{}"2";"3"|(.4){\,}="5"
\ar@{}"2";"3"|(.6){\,}="6"
\ar@{}"5" ;"6"^{\simT}
}
$$
where $\cL$ is a topological groupoid, and $u$ and $v$ are essential  equivalences.
\end{itemize}

The horizontal composition of generalized maps $\cK\overset{\e}{\gets}\cJ\overset{\phi}{\to}\Gd$ and $\Gd\overset{\zeta}{\gets}\cJ'\overset{\psi}{\to}\cL$ is given by the diagram

\[
\xymatrix@!=1pc{&&\cJ'\times_{\Gd}\cJ\ar[dr]\ar[dl]&&\\
&\cJ\ar[dl]_{\e}\ar[dr]^{\phi}&&\cJ'\ar[dl]_{\zeta}\ar[dr]^{\psi}&\\
\cK &&\Gd&&\cL
}
\]
where $\cJ'\times_{\Gd}\cJ$ is the weak pullback of groupoids. Note that this composition is associative only up to a $2$-morphism.

\subsection{Translation groupoids}
Let $G$ be a topological group with a continuous  left action on a topological space $X$.
Then the {\em translation groupoid} $G\ltimes X$ is defined by:
\begin{itemize}   
\item The space of objects is  $X$ itself, and the space of arrows is the cartesian product  $G\times X$.    
\item The source $s: G\times X\to X$ is the second
projection, and the target $t:G\times X\to X$ is given by  the action. Then $(g, x) $ is an arrow $x \to gx$.  
\item The other structure maps are defined by the unit  $u(x)=(e,x)$, where $e$ is the identity element in $G$,
and $(h,gx)\circ(g,x)=(h\star g,x)$ where $\star$ is the group multiplication.
\end{itemize}

\begin{exam}
These examples will appear later on in our applications.
\begin{enumerate}
\item
{\it Unit groupoid.} 
Consider the groupoid $e\ltimes X$ given by the action of the trivial group $e$ on the topological space $X$. This is a topological groupoid whose arrows are all units. In this way, any topological space can be considered as a groupoid.
\item
{\it Multiplication groupoid.} Let $H$ be a subgroup of a topological group $G$. Consider the translation groupoid $H\ltimes G$ where $H$ acts by multiplication on $G$.
\item
{\it Conjugation groupoid.} Let $H$ be a subgroup of a topological group $G$. Consider the translation groupoid $H\ltimes G$ where $H$ acts by conjugation on $G$.
\item
{\it Point groupoid.} Let $G$ be a topological group. Let $\bullet$ be a point. Consider the groupoid $G\ltimes \bullet$ where $G$ acts trivially on the point. This is a topological groupoid with exactly one object $\bullet$ and $G$ is the space of arrows in which the maps $s$ and $t$ coincide. We call $G\ltimes \bullet$ the point groupoid associated to G. In this way any group can be considered as a groupoid.
\end{enumerate}
We will denote ${\bf{1}}$ the {\em trivial groupoid} with one object and one arrow, that is ${\bf{1}}=e\ltimes \bullet$ the unit groupoid over a point or a point groupoid associated to the trivial group. \end{exam}
An {\em equivariant map} $G\ltimes X\rightarrow K\ltimes Y$ between translation groupoids consists of a pair $\varphi\ltimes f$, 
where $\varphi\colon G\rightarrow K$
is a group homomorphism and $f\colon X\rightarrow Y$ satisfies $f(gx)=\varphi(g) f(x)$
for $g\in G$ and $x\in X$. 

Translation groupoids, equivariant maps and natural transformations form a 2-category that we denote $\TrG$.

\subsection{The Morita bicategory of translation groupoids $\MTrG$}
We construct now a sub  bicategory $\MTrG$ of the Morita bicategory of topological groupoids $\MTG$ where the objects are strictly the translation groupoids and the maps are equivariant ones.

\begin{prop}    \cite{Pronk:2010} Let $\psi: G\ltimes X\to L\ltimes Z$ and $\phi: H\ltimes Y\to L\ltimes Z$ be equivariant maps.
The fibre product $\cK$ 
$$\xymatrix{\cK\ar[r]^-{\pi_{1}\ltimes f}\ar[d]_{\pi_{2}\ltimes g}&G\ltimes X\ar[d]^{\psi}\\ H\ltimes Y\ar[r]^{\phi}&L\ltimes Z}$$
is again a translation groupoid. Moreover, its structure group is $G \times H$, $\cK=(G \times H) \ltimes P$ and the first components of the equivariant maps $\pi_{1}\ltimes f$ and $\pi_{2}\ltimes g$ are the group projections $\pi_{1}: G\times H\to G$ and $\pi_{2}: G\times H\to H$.
\end{prop}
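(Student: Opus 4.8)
The plan is to compute the groupoid pullback $\cK=(G\ltimes X)\times_{L\ltimes Z}(H\ltimes Y)$ from the definition of the groupoid pullback recalled above and then to exhibit an explicit isomorphism of topological groupoids between $\cK$ and a translation groupoid whose structure group is $G\times H$. Write the equivariant maps as $\psi=\alpha\ltimes a$ and $\phi=\beta\ltimes b$, where $\alpha\colon G\to L$, $\beta\colon H\to L$ are group homomorphisms, $a\colon X\to Z$, $b\colon Y\to Z$ are continuous, and $a(gx)=\alpha(g)a(x)$, $b(hy)=\beta(h)b(y)$.

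First I would identify the object space. By the definition, an object of $\cK$ is a triple $(x,(l,z),y)$ with $a(x)=t(l,z)=lz$ and $z=s(l,z)=b(y)$; since the second equation determines $z$ from $y$, dropping it yields a homeomorphism onto
\[
P:=\{(x,l,y)\in X\times L\times Y \ :\ a(x)=l\,b(y)\}.
\]
I claim that $G\times H$ acts continuously on $P$ by $(g,h)\cdot(x,l,y)=(gx,\ \alpha(g)\,l\,\beta(h)^{-1},\ hy)$: the right-hand triple again lies in $P$ because $a(gx)=\alpha(g)a(x)=\alpha(g)l\,b(y)=\bigl(\alpha(g)\,l\,\beta(h)^{-1}\bigr)b(hy)$, and the action axioms follow immediately from those of the given $G$- and $H$-actions. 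Thus $P$ is precisely the object space of $(G\times H)\ltimes P$.

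Next I would carry out the analogous computation for arrows. By the definition, an arrow of $\cK$ is a triple $((g,x),(l,z),(h,y))$ in $(G\times X)\times(L\times Z)\times(H\times Y)$ satisfying the compatibility conditions dictated by the definition, which read $\alpha(g)a(x)=lz$ and $z=\beta(h)b(y)$; once more $z$ is redundant. The correspondence I would use is
\[
((g,x),(l,z),(h,y))\ \longmapsto\ \bigl((g,h),\ (x,\ \alpha(g)^{-1}\,l\,\beta(h),\ y)\bigr),
\]
which is a homeomorphism onto the arrow space $(G\times H)\times P$ of $(G\times H)\ltimes P$, with inverse $((g,h),(x,l',y))\mapsto\bigl((g,x),(\alpha(g)\,l'\,\beta(h)^{-1},\,\beta(h)b(y)),(h,y)\bigr)$. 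Using the formulas $s(k,\gamma,\ell)=(s(k),\psi(k)^{-1}\gamma\phi(\ell),s(\ell))$ and $t(k,\gamma,\ell)=(t(k),\gamma,t(\ell))$ of the definition (with $\gamma,\ell$ its second and third variables) together with the multiplication of $L\ltimes Z$, I would verify that the arrow above runs from $(x,\alpha(g)^{-1}l\beta(h),y)$ to $(gx,l,hy)$, which agrees with the source and target of $\bigl((g,h),(x,\alpha(g)^{-1}l\beta(h),y)\bigr)$ since $(g,h)\cdot(x,\alpha(g)^{-1}l\beta(h),y)=(gx,l,hy)$, and similarly that units correspond. Writing $k_i=(g_i,x_i)$, $\gamma_i=(l_i,z_i)$, $\ell_i=(h_i,y_i)$, composition in $\cK$ works out to $(k_2,\gamma_2,\ell_2)\circ(k_1,\gamma_1,\ell_1)=(k_2k_1,\gamma_2,\ell_2\ell_1)$, and composability forces $l_1=\alpha(g_2)^{-1}l_2\beta(h_2)$, which is exactly the relation needed to get $\alpha(g_2g_1)^{-1}l_2\beta(h_2h_1)=\alpha(g_1)^{-1}l_1\beta(h_1)$; hence composition (and then inversion) is preserved. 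This gives $\cK\cong(G\times H)\ltimes P$ as topological groupoids, so $\cK$ is a translation groupoid with structure group $G\times H$.

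Finally, I would transport the two projections through this isomorphism. The projection $\pi_1\colon\cK\to G\ltimes X$ of the definition sends $(x,(l,z),y)\mapsto x$ and $((g,x),(l,z),(h,y))\mapsto(g,x)$; under the identification above this becomes the equivariant map $\pi_1\ltimes f$ whose group part is the projection $\pi_1\colon G\times H\to G$, $(g,h)\mapsto g$, and whose space part is $f\colon P\to X$, $(x,l,y)\mapsto x$ (equivariance being immediate from the action formula). The symmetric argument with the second factor yields $\pi_2\ltimes g$ with $\pi_2\colon G\times H\to H$ the projection and $g\colon P\to Y$, $(x,l,y)\mapsto y$. The 2-commutativity of the square and its universal property are inherited from the general groupoid pullback recalled above, so nothing further is needed there. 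I expect the one real difficulty to be organizational: keeping the two redundant $z$-coordinates, the composition order convention in $L\ltimes Z$, and the twist $l\mapsto\alpha(g)^{-1}l\beta(h)$ mutually consistent, so that source, target, units and composition match on the nose rather than merely up to isomorphism.
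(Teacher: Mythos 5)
Your computation is correct. The paper offers no proof of this proposition — it is quoted from \cite{Pronk:2010} — and your argument (identifying the object space of the groupoid pullback with $P=\{(x,l,y): a(x)=l\,b(y)\}$, the arrow space with $(G\times H)\times P$ via the twist $l\mapsto\alpha(g)^{-1}l\beta(h)$, and checking that source, target, composition and the two projections match the translation-groupoid structure for the action $(g,h)\cdot(x,l,y)=(gx,\alpha(g)l\beta(h)^{-1},hy)$) is precisely the construction given in that reference, carried out correctly.
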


An {\em equivariant essential equivalence} is an equivariant map $\xi\ltimes \e$ which is an essential equivalence.

Consider the bicategory whose 

\begin{itemize}
\item Objects are translation groupoids $G\ltimes X$.
\item 1-morphisms from $G\ltimes X$ to $K\ltimes Y$ are  {\it equivariant generalized maps}
\[G\ltimes X\overset{\xi\ltimes \e}{\gets}L\ltimes Z\overset{\varphi\ltimes f}{\to}K\ltimes Y\]
such that  $\xi\ltimes\e$ is an equivariant essential equivalence.
\item A 2-morphism $\Rightarrow$ from the equivariant generalized map $G\ltimes X\overset{\xi\ltimes\e}{\gets}L\ltimes Z\overset{\varphi\ltimes f}{\to}K\ltimes Y$ to $G\ltimes X\overset{\xi'\ltimes\e'}{\gets}L'\ltimes Z'\overset{\varphi'\ltimes f'}{\to}K\ltimes Y$ is given by a class of diagrams:
$$
\xymatrix{ &
{L\ltimes Z}\ar[dr]^{\varphi\ltimes f}="0" \ar[dl]_{\xi\ltimes\e }="2"&\\
{G\ltimes X}&{R\ltimes U} \ar[u]_{u} \ar[d]^{v}&{K\ltimes Y}\\
&{L'\ltimes Z'}\ar[ul]^{\xi'\ltimes\e'}="3" \ar[ur]_{\varphi'\ltimes f'}="1"&
\ar@{}"0";"1"|(.4){\,}="7"
\ar@{}"0";"1"|(.6){\,}="8"
\ar@{}"7" ;"8"_{\sim_{T'}}
\ar@{}"2";"3"|(.4){\,}="5"
\ar@{}"2";"3"|(.6){\,}="6"
\ar@{}"5" ;"6"^{\simT}
}
$$

where $R\ltimes U$ is a translation groupoid, and $u$ and $v$ are equivariant essential equivalences. \end{itemize}

Translation groupoids, equivariant generalized maps and diagrams as above form the {\em Morita bicategory of translation groupoids} that we denote $\MTrG$. 


\section{Orbifolds as groupoids}
We recall now the description of orbifolds as groupoids due to Moerdijk and Pronk \cite{Pronk1996, Pronk1997}. Orbifolds were first introduced by Satake \cite{S} as a generalization of a manifold defined in terms of local quotients. The groupoid approach provides a global language to reformulate the notion of orbifold.

 A groupoid $\Gd$ is {\it proper} if $(s,t):G_1\to G_0\times G_0$ is a proper map and it is a {\it foliation} groupoid if each isotropy group is discrete. 

\begin{defn}
An {\it orbifold} groupoid is a proper foliation groupoid.
\end{defn}

Given an orbifold groupoid $\Gd$, its orbit space $|\Gd|$ is a locally compact Hausdorff space. Given an arbitrary locally compact Hausdorff space $X$ we can equip it  with an orbifold structure as follows:

\begin{defn} An {\it orbifold structure} on a locally compact Hausdorff space $X$ is given by an orbifold groupoid $\Gd$ and a homeomorphism $h:|\Gd|\to X$.
\end{defn}

If $\e:\cH\to \Gd$ is an essential equivalence and $|\e|:|\cH|\to |\Gd|$ is the induced homeomorphism between orbit spaces, we say that the composition $h\circ|\e|:|\cH|\to X$ defines an {\it equivalent} orbifold structure. 

\begin{defn}  An {\it orbifold}  $\X$ 
is a space $X$ equipped with an equivalence class of orbifold 
structures. A specific such structure, given by 
$\Gd$ and  $h : |\Gd | \to X $ is
a {\it presentation} of the orbifold 
$\X$.
\end{defn}

If two groupoids are Morita equivalent, then they define the same orbifold. Therefore any structure or invariant for orbifolds, if defined through groupoids, should be invariant under Morita equivalence. 

\begin{defn}  An {\it orbifold map} $f\colon \Y\to \X$ is given by an equivalence class of generalized maps $(\e,\phi)$ from $\cK$ to $\Gd$ between presentations of the orbifolds such that the diagram commutes:

\[\xymatrix{
|\cK| \ar[r]^{|\phi||\e|^{-1}}\ar[d]& |\Gd| \ar[d]^{} \\ 
Y \ar[r]& X}\]
A specific such generalized map $(\e,\phi)$ is called a {\it presentation} of the orbifold map $f$.
\end{defn}

We can obtain an orbifold by consider the action of a compact group $G$ acting on a space $X$ with finite stabilizers. All orbifolds can be described in this way \cite{pardon}.

The orbifold $\X$ is {\em developable} is it is presented by a groupoid Morita equivalent to a translation groupoid $G\ltimes X$ with $G$ a discrete group acting properly on $X$.


\section{Path Groupoid}\label{path}
From now on, we will focus on developable orbifolds and $G$ will be a discrete group acting properly on $X$. In this context, we will show that in the bicategory of topological groupoids any path in $G\ltimes X$
$$I\overset{}{\gets}\Int \overset{}{\to}G\ltimes X$$
is equivalent to a strict map 
$$I \overset{}{\to}G\ltimes X$$
where $I$ is the unit groupoid $e\ltimes I$, $I=[0,1]$ and $\Int$ is any topological groupoid.

\subsection{Generalized paths}
A {\em path} in the groupoid $G\ltimes X$ in the Morita bicategory of topological groupoids is a generalized map $(\delta, \beta)$ from the unit groupoid $I$ to $G\ltimes X$. That is, a span 
$$I\overset{\delta}{\gets}\Int\overset{\beta}{\to}G\ltimes X.$$

Since $I\overset{\delta}{\gets}\Int$ is an essential equivalence, we can use groupoid atlases 
\cite{Pronk2017, thesis} to see that the equivalence class $[I\overset{\delta}{\gets}\Int\overset{\beta}{\to}G\ltimes X]$
has a representative of the form:

$$I\overset{\e}{\gets}I_{S_n}\overset{\alpha}{\to}G\ltimes X$$
where $I_{S_n}$ is the groupoid associated to a subdivision $$S_n=\{0=r_0\le r_1<\cdots<r_{n-1}\le r_n=1\}$$ of the interval $I=[0,1]$ as explained below.

The space of objects of the groupoid $I_{S_n}$  is the disjoint union
$$\bigsqcup_{i=1}^{n} I_i$$
where $I_i$ is a small open neighborhood of $[r_{i-1}, r_{i}]$ and $(r,i)$ denotes an element $r$ in the connected component  $I_i$.

The space of arrows of $I_{S_n}$ is given by the disjoint union $$\bigsqcup_{i=1}^{n}  I_i\bigsqcup_{i=1}^{n-1} ( \tilde{I}_i \sqcup \tilde{I}_i)$$

where $\displaystyle\bigsqcup_{i=1}^{n}  I_i$ is the set of unit arrows,
$ \tilde{I}_i= I_i\cap I_{i+1}$ and another copy $\tilde{I}_i$ was added for inverse arrows.
For each point $r_i$ in the subdivision ${S_n}$, $ \tilde{I}_i$ is an open neighborhood of $r_i$. Two arrows were added for each point $(r,i)$ in the interval $ \tilde{I}_i$: $\tilde r_i$ and its inverse arrow such that the source of $\tilde r_i$ is $(r,i)$ and its target is $(r,i+1)$.

\begin{defn} A {\em generalized path} in the groupoid $G\ltimes X$ is a generalized map $I\overset{\e}{\gets}I_{S_n}\overset{\alpha}{\to}G\ltimes X$ such that
\begin{enumerate}
\item
$\e: I_{S_n}\to I$ on objects is the inclusion in each connected component, $\e(r,i)=r$ and on arrows it sends all arrows to identity arrows, $\e(\tilde r_i)=\id_{r}$
\item 
$\alpha: I_{S_n}\to G\ltimes X$ on objects is given by a map $\alpha_i :  I_i\to X$ in each connected component and on arrows is given by 
$\alpha(\tilde r_i)=(k_i, \alpha_i( r))$
satisfying the condition $k_i  \alpha_i( r)=\alpha_{i+1}( r)$ for all $r\in \tilde I_i$.
\end{enumerate}

\end{defn}

We denote $\mmm( I_{S_n},G\ltimes X)$ this space of maps from $I_{S_n}$ to $G\ltimes X$ with the compact open topology.

\subsubsection{Equivalence of generalized paths}
We will establish now an equivalence relation between the generalized maps defining our generalized paths which will allow us to give a groupoid structure to the space of generalized paths.

\begin{defn} Two  generalized paths $I\overset{\e}{\gets}I_{S_m}\overset{\alpha}{\to}G\ltimes X$ and $I\overset{\e'}{\gets}I_{S_{m'}}\overset{\beta}{\to}G\ltimes X$ are equivalent if there exist a subdivision $S_n$ and essential equivalences $u$ and $v$ such that the following diagram commutes up to natural transformations.
 $$
\xymatrix{ &
{I_{S_m}}\ar[dr]^{ \alpha}="0" \ar[dl]_{\e }="2"&\\
{I}&{I_{S_n}} \ar[u]_{u} \ar[d]^{v}&{G\ltimes X}\\
&{I_{S_{m'}}}\ar[ul]^{\e'}="3" \ar[ur]_{ \beta}="1"&
\ar@{}"0";"1"|(.4){\,}="7"
\ar@{}"0";"1"|(.6){\,}="8"
\ar@{}"7" ;"8"_{\sim_{}}
\ar@{}"2";"3"|(.4){\,}="5"
\ar@{}"2";"3"|(.6){\,}="6"
\ar@{}"5" ;"6"^{\sim}
}
$$
\end{defn}
Since $G$ is discrete, the condition $\alpha u \sim \beta v$ guarantees the existence of a natural transformation $T:\bigsqcup_{i=1}^{n} I_i\to G\times X$ such that $T(r,i)=(g_i, \alpha_i(r))$ with $\beta_i(r)=g_i\alpha_i(t)$. By naturality of the transformation we have that the following diagram commutes for all $r\in \tilde I_i$

$$\xymatrix{\alpha_i(r, i)\ar[r]^-{g_{i}}\ar[d]_{k_{i}}&\beta_i(r, i)\ar[d]^{k'_{i}}\\ \alpha_{i+1}(r, i)\ar[r]^{g_{i+1}}&\beta_{i+1}(r, i)}$$
therefore $k'_i=g_{i+1} k_i {g_i}^{-1}$ for all $i=1, \ldots, n-1$.

\begin{remark} Two generalized paths are equivalent if there exists a common subdivision $S_n$ and $g_i\in G$  such that $\beta_i(r)=g_i\alpha_i(r)$ for all $i=1, \ldots, n$ and $k'_i=g_{i+1} k_i {g_i}^{-1}$ for all $i=1, \ldots, n-1$.

\end{remark}
Then we have a translation groupoid $G^n\ltimes \mmm( I_{S_n},G\ltimes X)$ given by this
action of $G^n$ on the space $\mmm( I_{S_n},G\ltimes X)$. Source and target are given by $$s(  (g_1, \dots, g_{n}),(\alpha_1, \dots, \alpha_n, k_1,  \dots, k_{n-1}))=(\alpha_1,  \dots, \alpha_n, k_1,  \dots, k_{n-1})$$ and $$t(  (g_1,\dots, g_{n}),(\alpha_1,  \dots, \alpha_n, k_1,  \dots, k_{n-1}))=(g_1\alpha_1, \dots, g_n\alpha_n, g_{2}k_1{g^{-1}_1},  \dots, g_{n}k_{n-1}{g^{-1}_{n-1}}).$$

\subsubsection{Colimit construction}
In order to account for all possible subdivisions, we will consider the colimit of the groupoids $G^n\ltimes\mmm( I_{S_n},G\ltimes X)$ over a partially ordered set that we describe next.

We define the category $\mathcal{C}_I$ as the category with objects the ordered tuples $S_n=\{0=r_0\le r_1\le\cdots\le r_n=1\}$ with an open cover of $I=[0,1]$ given by connected intervals  $\{I_i \mid 1\leq i \leq n\}$. We require that:
\begin{enumerate}
\item  $[r_{i-1},r_{i}]\subseteq I_i$ and $I_i\cap \{r_0,r_1,\ldots,r_n\}=\{r_{i-1},r_i\}$, which is one point if $r_{i-1}=r_i$ and two points if $r_{i-1}<r_i$.
\item
\begin{itemize}
\item[2a)] If $r_{k-2}<r_{k-1}=r_k = \ldots = r_l<r_{l+1}$ then we require that  $I_{k} = I_{k+1} = \ldots = I_{l} \subseteq I_{k-1} \cap I_{l+1}$. 
\item[2b)] If $0=r_{0}=r_{1}= \ldots = r_k<r_{k+1}$ then we require that  $I_{1} = I_{2} = \ldots = I_{k} \subseteq I_{k+1} $.
\item[2c)] If $r_{k-1}<r_{k}=r_{k+1}= \ldots = r_n=1$ then we require that  $I_{k+1} = I_{k+2} = \ldots = I_{n} \subseteq I_{k}$.
\end{itemize}
\end{enumerate}

We have a morphism from $\left ( \{r_0\le r_1\le\cdots\le r_n\}, \{I_i\} \right )$ to $\left ( \{t_0\le t_1\le\cdots\le t_m\},\{\widetilde{I}_j\}\right ) $ if 
\begin{enumerate}
\item[I)] $\{r_0,r_1,\cdots,r_n\} \supseteq \{t_0,t_1,\cdots,t_m\}$. 
\item[II)] The multiplicity of repeated elements decreases, i.e. for every $i$, $| \{ j \mid r_j = r_i \} | \geq | \{ j \mid t_j = r_i \} |$.
\item[III)]  The open cover $\{I_i\}$ is a refinement of the open cover $\{\widetilde{I}_j\}$ in the following way: 
\begin{itemize}
\item[IIIa)]  For each closed interval $[r_{i-1},r_i]$ with non-empty interior there is  a unique $[t_{j-1},t_j]$ with $[r_{i-1},r_i] \subseteq [t_{j-1},t_j]$ and we have

$$\xymatrix{I_i \ar[r]^-{\subseteq} & \widetilde{I_j}\\ [r_{i-1},r_i] \ar[u]^{\subseteq} \ar[r]^{\subseteq}& [t_{j-1},t_j] \ar[u]^{\subseteq} }$$

\item[IIIb)] If there is a repeated element in the $\{t_0\le t_1\le\cdots\le t_m\}$,  $t_{j-1}=t_j$, it is also a repeated element of $\{r_0\le r_1\le\cdots\le r_n\}$,  $r_{i-1}=r_i$. We require $I_i \subseteq \widetilde{I}_j$.
\end{itemize}
\end{enumerate}

The morphisms are generated (as a category) by the set of morphisms:
\begin{enumerate}
\item Eliminating a point from the subdivision \\$\{0=r_0\le r_1\le\cdots\le r_i \le \cdots \le r_n=1\}$:
$$
d_i : \left(\{r_0\le  \cdots \le r_i \le \cdots \le r_n\},\{I_i\}\right ) \rightarrow \left(\{r_0\le\cdots\le \widehat{r_i} \le \cdots \le r_n\},\{\widetilde{I}_j\}\right)
$$
where $d_i$  drops the $i$-th element and concatenates the  consecutive intervals $I_i,I_{i+1}$, i.e: $\widetilde{I}_j=I_j$ for $j=0,\dots,i-1$, $\widetilde{I}_i=I_i \cup I_{i+1}$ and $\widetilde{I}_j=I_j$ for $j=i+1,\dots,n$.
\item Enlarging the intervals without changing the points of the subdivision $\{0=r_0\le r_1\le  \cdots \le r_n=1\}$:
$$
u: \left(\{r_0\le  \cdots \le r_n\},\{I_i\}\right ) \rightarrow \left(\{r_0\le
\cdots \le r_n\},\{\widetilde{I}_i\}\right)
$$
when $I_i \subseteq \widetilde{I}_i$.
\end{enumerate}

We call $\mathcal{C}_I$ the category of subdivisions of $I$ which is a cofiltered category, which boils down to the fact that for two subdivisions there is a common refinement.

For every morphism, there is a continuous map given by concatenation and inclusion
$$
\bigsqcup_{i} I_i \rightarrow \bigsqcup_{j} \widetilde{I}_j.
$$
To the morphism $d_i :S_n \rightarrow S_{n-1}$ we assign the functor  ${d_i}_* : I_{S_n} \rightarrow I_{S_{n-1}}$ that on objects concatenates  $I_i \cup I_{i+1}$ and on morphisms sends $\tilde r_i$ and its inverse arrow $\tilde r'_i$ to the identity arrow on $(r,i)$. Similarly for $ u : S_n \rightarrow S_n$, there is a functor $u_* : I_{S_n} \rightarrow I_{S_n}$ given by inclusion at the level of objects and morphisms. This gives a functor from $\mathcal{C}_I \rightarrow  \Grd$. We can obtain a contravariant functor $\psi$ from $\mathcal{C}_I^{op}$ to topological spaces that on objects sends $S_n$ to $\mmm( I_{S_n} , G\ltimes X)$ and on morphisms sends $d_i:S_n \rightarrow S_{n-1}$ to the morphism $d_i^* : \mmm( I_{S_{n-1}} , G\ltimes X) \rightarrow \mmm( I_{S_{n}}, G\ltimes X)$ given by  taking $\alpha \in \mmm ( I_{S_{n-1}}, G\ltimes X)$ represented by $(\alpha_1, \dots, \alpha_{n-1}, k_1,  \dots, k_{n-2})$ and sending it to $ 
(\alpha_1, \dots,\alpha_i\vert_{I_i},\alpha_i\vert_{I_{i+1}},\ldots, \alpha_{n-1}, k_1,  \dots,k_{i-1},id,k_{i},k_{i+1},\ldots, k_{n-2})$, i.e. taking  $\alpha_i : I_i \cup I_{i+1} \rightarrow X$ to the restrictions to $I_i$ and $I_{i+1}$. Similarly $u^*: \mmm( I_{S_{n}} , G\ltimes X) \rightarrow \mmm( I_{S_{n}}, G\ltimes X)$ is just restriction of all the paths: taking $\alpha \in \mmm ( I_{S_{n}}, G\ltimes X)$ represented by $(\alpha_1, \dots, \alpha_{n}, k_1,  \dots, k_{n-1})$ and sending it to $ 
(\alpha_1\vert_{I_1}, \dots, \alpha_{n} \vert_{I_n}, k_1, \ldots, k_{n-1})$.

We have an action of $G^n$ on the space $\mmm( I_{S_{n}} , G\ltimes X)$ given by 
$$
(g_1,\dots, g_{n})\cdot(\alpha_1,  \dots, \alpha_n, k_1,  \dots, k_{n-1})=(g_1\alpha_1, \dots, g_n\alpha_n, g_{2}k_1{g^{-1}_1},  \dots, g_{n}k_{n-1}{g^{-1}_{n-1}}).
$$ 
The map 
$
d_i^* : \mmm ( I_{S_{n-1}}, G\ltimes X) \rightarrow \mmm ( I_{S_{n}}, G\ltimes X)
$
is equivariant with respect to the map
$
\sigma_i : G^{n-1} \rightarrow G^n
$
given by $\sigma_i(g_1,\ldots,g_{n-1}) = (g_1,\ldots,g_i,g_i,g_{i+1},\ldots,g_{n-1})$. This means that
\begin{multline*}
\sigma_i(g_1,\ldots,g_{n-1}) \cdot d_i^*(\alpha_1,\ldots,\alpha_{n-1},k_1,\ldots,k_{n-2}) \\ = d_i^*((g_1,\ldots,g_{n-1})\cdot (\alpha_1,\ldots,\alpha_{n-1},k_1,\ldots,k_{n-2}))
\end{multline*}
This is  because $(g_1,\dots,g_i,g_i,g_{i+1},\ldots,g_{n-1})$ acting on 
$$
(\alpha_1, \dots,\alpha_i\vert_{I_i},\alpha_i\vert_{I_{i+1}},\ldots, \alpha_{n-1}, k_1,  \dots,k_{i-1},id,k_{i},k_{i+1},\ldots, k_{n-2})
$$ 
is equal in the first part to
$$
(g_1\alpha_1, \dots,g_i\alpha_i\vert_{I_i},g_i\alpha_i\vert_{I_{i+1}},\ldots, g_{n-1}\alpha_{n-1})
$$
and in the second part to
$$
( g_{2}k_1{g^{-1}_1},  \dots,g_{i}k_{i-1}{g^{-1}_{i-1}},g_i id g_i^{-1},g_{i+1}k_{i}g_i^{-1},\ldots, g_{n-1}k_{n-2}g_{n-2}^{-1})
$$ 
which is
$$
(g_{2}k_1{g^{-1}_1},  \dots,g_{i}k_{i-1}{g^{-1}_{i-1}},id ,g_{i+1}k_{i}g_i^{-1},\ldots, g_{n-1}k_{n-2}g_{n-2}^{-1})
$$
this is precisely
$$
d_i^*((g_1,\ldots,g_{n-1})\cdot (\alpha_1,\ldots,\alpha_{n-1},k_1,\ldots,k_{n-2})).
$$
Similarly the map $u^* : \mmm ( I_{S_{n}}, G\ltimes X) \rightarrow \mmm ( I_{S_{n}}, G\ltimes X)$ is equivariant with respect to the identity map $G^n \rightarrow G^n$.

Therefore we have a contravariant functor from $\mathcal{C}_I$ to the category of translation groupoids that on objects sends $S_n$ to $G^n \ltimes \mmm ( I_{S_n}, G\ltimes X)$ and on morphisms sends $d_i : S_n \rightarrow S_{n-1}$ to the functor $(d_i^*,\sigma_i)$ and $u :  S_n \rightarrow S_n$ to the functor $(u^*,id)$, formally we have a (covariant) functor
$
\xymatrix{
\varPhi : \mathcal{C}_I^{op} \ar[r] & \TrG.}
$

We consider now the (filtered) colimit of $\varPhi$,
$$
P= \colim_{\mathcal{C}_I^{op}} \varPhi
$$
given by an object $P\in \TrG$ together with  morphisms from $\mmm ( I_{S_n}, G\ltimes X)$ for each $S_n$ such that  for each morphism  the following diagrams commute:

For $d_i$:
$$
\begin{tikzcd}
G^n\ltimes \mmm ( I_{S_n}, G\ltimes X) \arrow[]{dr}{} \arrow[]{dd}{{\sigma_i \ltimes d_i^*}} &   \\
&P    \\
G^{n-1}\ltimes \mmm ( I_{S_{n-1}} , G\ltimes X) \arrow{ru}{}& 
\end{tikzcd}
$$
For $u$:
$$
\begin{tikzcd}
G^n\ltimes \mmm ( I_{S_n}, G\ltimes X) \arrow[]{dr}{} \arrow[]{dd}{{id \ltimes u^*}} &   \\
&P    \\
G^{n}\ltimes \mmm ( I_{S_{n}} , G\ltimes X) \arrow{ru}{}& 
\end{tikzcd}
$$

Moreover, $P=\colim \varPhi$ has the following universal property. Given another translation groupoid $W$ with functors from $G^n\ltimes \mmm ( I_{S_n}, G\ltimes X)$ that are compatible, such functors factor uniquely through the colimit $P$ as shown in the following diagrams:
$$
\begin{tikzcd}
G^n\ltimes \mmm ( I_{S_n}, G\ltimes X) \arrow[]{dr}{} \arrow[bend left]{drr}{} \arrow[]{dd}{{\sigma_i \ltimes d_i^*}} & &  \\
&P  \arrow[dashed]{r} & W\\
G^{n-1}\ltimes \mmm ( I_{S_{n-1}}, G\ltimes X) \arrow{ru}{}\arrow[bend right]{urr}{}& &
\end{tikzcd}
$$
$$
\begin{tikzcd}
G^n\ltimes \mmm ( I_{S_n}, G\ltimes X) \arrow[]{dr}{} \arrow[bend left]{drr}{} \arrow[]{dd}{{id \ltimes u^*}} & &  \\
&P  \arrow[dashed]{r} & W\\
G^{n}\ltimes \mmm ( I_{S_{n}}, G\ltimes X) \arrow{ru}{}\arrow[bend right]{urr}{}& &
\end{tikzcd}
$$

\begin{defn} The {\em path groupoid } $P(G\ltimes X)$ of the translation groupoid $G\ltimes X$ is 
$$P(G\ltimes X)= \colim_{\mathcal{C}_I^{op}} \varPhi$$ where $\varPhi :  \mathcal{C}_I^{op} \to  \TrG$ is as above.
\end{defn}

We are ready now to give an explicit construction of the groupoid $P=P(G\ltimes X)$ by using the constructions of colimits in the category of topological spaces $\Top$ and in the category of groups $\Grp$.

The colimit of the contravariant functor $\psi:\mathcal{C}_I^{op}\to \Top$ is a topological space $M=\colim \psi$  such that
$$M=({\coprod_{\mathcal{C}_I} \mmm( I_{S_n},G\ltimes X)})/\sim$$
where $\sim$ is the equivalence relation generated by $\alpha\sim d_i^*(\alpha)$ for all $S_n$  and $d_i:S_n \rightarrow S_{n-1}$ and $\alpha\sim u^*(\alpha)$ for all $S_n$  and $u:S_n \rightarrow S_{n}$.

This topological space $M=\colim \psi$ will be the space of objects of the path groupoid $P$. To construct the space of arrows of the path groupoid, we consider now a colimit in the category of groups.

Consider the functor $\varphi : \mathcal{C}_I^{op}\to \Grp$ which sends $S_n$ to $G^n$ and on morphisms sends $u: S_n \rightarrow S_n$ to the identity $G^n \rightarrow G^n$ and  $d_i:S_n \rightarrow S_{n-1}$ to the morphism $\sigma_i : G^{n-1} \rightarrow G^n$
given by $\sigma_i(g_1,\ldots,g_{n-1}) = (g_1,\ldots,g_i,g_i,g_{i+1},\ldots,g_{n-1})$. 

The colimit of $\varphi$ is a group $H=\colim \varphi$  such that
$$H=({\coprod_{\mathcal{C}_I} G^n})/\sim$$
where $\sim$ is generated by $(g_1,\ldots,g_{n-1}) \sim (g_1,\ldots,g_i,g_i,g_{i+1},\ldots,g_{n-1})$. This group $H$ is discrete and acts on the topological space $M$ constructed above.

We can describe  now explicitly the object and arrow spaces of the path groupoid $P=P(G\ltimes X)$ in $\TrG$:
$$P_0=M= \colim \psi= {\coprod_{\mathcal{C}_I} \mmm( I_{S_n},G\ltimes X)}/\sim$$
and $$P_1= H\times M= \colim \varphi \times \colim \psi =\left(({\coprod_{\mathcal{C}_I} G^n})/\sim \right)\times \left(({\coprod_{\mathcal{C}_I} \mmm( I_{S_n},G\ltimes X)})/\sim\right)$$ which we endow with the inductive topology.

\begin{remark} Let $G$ be a discrete group acting on $X$. The  path groupoid of $G\ltimes X$ is the translation groupoid $$P=P(G\ltimes X)=H \ltimes M.$$ 
\end{remark}
We will  show that this path groupoid $P=\colim \varPhi$ described above is actually equivalent to the translation groupoid $G\ltimes X^I$. In order to give an explicit characterization of the equivalence of categories, we will introduce some auxiliary groupoids which in turn will relate to the idea introduced in \cite{colman2011} of multiple $G$-paths.


\subsection{Multiple $G$-paths} We will provide  now another description of the path groupoid in terms of equivariant generalized maps. We will see that for each generalized path $(\e,\alpha)$, its equivalence class  $[I\overset{\e}{\gets}I_{S_n}\overset{\alpha}{\to}G\ltimes X]$  contains a representative  in $\MTrG$ of the form:
$$I\overset{\delta}{\gets}G\ltimes Y\overset{\phi}{\to}G\ltimes X$$
where $G\ltimes Y$ is a translation groupoid.

Given a generalized path $I\overset{\e}{\gets}I_{S_n}\overset{\alpha}{\to}G\ltimes X$,  we will construct a space $Y=Y_{\alpha}$ such that $G\ltimes Y$ is Morita equivalent to $I_{S_n}$, and maps $\delta: G\ltimes Y\overset{}{\to}I$ and $\phi: G\ltimes Y\overset{}{\to}G\ltimes X$ such that $(\delta, \phi)$ is $2$-isomorphic to the given $G$-path $(\e, \alpha)$.

\subsubsection{Construction of  $G\ltimes Y_{\alpha}$} Let $\alpha=(\alpha_1, \ldots, \alpha_n, k_1, \ldots, k_{n-1})$.
Consider the product space $$G\times (I_{S_n})_0=\{(g,(r,i))|\;g\in G, (r,i)\in I_i\}$$ and the following identifications for all $r\in \tilde I_i$:  $$(g,(r,i+1))\sim(k_i^{-1}g,(r,i))$$ where $\alpha(\tilde r_i)=(k_i,\alpha_i(r))$.

 Now $Y_{\alpha}$ is defined as the quotient space:
$$Y_{\alpha}=\{[(g,(r,i))] |\; (g,(r,i))\in G\times (I_{S_n})_0 \mbox{ and }(g,(r,i+1))\sim(k_i^{-1}g,(r,i)) \mbox{ for all }r\in \tilde I_i\}.$$

Observe that the space $Y_{\alpha}$ depends on $\alpha$ in the sense that it is given by the subdivision $S_n$ and the group elements  $k_1, \ldots, k_{n-1}$, but it is independent of the actual pieces of the path $\alpha_1, \ldots, \alpha_n$.

The action of $G$ on  $Y_{\alpha}$ is given by  the  multiplication in the group $h([g,(r,i)])=[gh^{-1},(r,i)]$. 

We can consider then the translation groupoid $G\ltimes Y_{\alpha}$ where the source and target are given by the maps $s(h, [g,(r,i)])=[g,(r,i)]$ and $t(h, [g,(r,i)])=[gh^{-1},(r,i)]$. 

\subsubsection{Morita equivalence  $I_{S_n}\sim_M G\ltimes Y_{\alpha}$} We will show now that the translation groupoid constructed above is Morita equivalent to the groupoid $I_{S_n}$.
Let $\nu: I_{S_n} \to G\ltimes Y_{\alpha}$ be the morphism defined by 
$\nu((r,i))=[e, (r,i)]$ on objects and $\nu(\tilde r_i)=(k_i, [e, (r,i)])$ on arrows for all $r\in \tilde I_i$.  
The open map $\nu$ is essentially surjective since
$s\pi_1: G\times (I_{S_n})_0\to Y_{\alpha}=(G\times (I_{S_n})_0)/\sim$
is the quotient projection. It is also fully faithful since $(I_{S_n})_1$ is given by the pullback of the following maps
\[\xymatrix{
& G\times Y_{\alpha} \ar[d]^{(s,t)} \\
(I_{S_n})_0\times (I_{S_n})_0 \ar[r]^{\nu \times \nu} & Y_{\alpha}\times Y_{\alpha}}\]

Therefore given a groupoid $I_{S_n}$, we can construct another groupoid $Y_{\alpha}$ for each set of elements $k_1, \ldots, k_{n-1}$ such that   $I_{S_n}$ is Morita equivalent to $G\ltimes Y_{\alpha}$.

\subsubsection{The $2$-isomorphism $(\e, \alpha)\Rightarrow(\delta, \phi)$}
We will define now the maps $\delta$ and $\phi$ to obtain the generalized map $I\overset{\delta}{\gets}G\ltimes Y_{\alpha}\overset{\phi}{\to}G\ltimes X$ being $2$-isomorphic to the given generalized path $(\e, \alpha)$.  

We define $\phi([g,(r,i)])=g^{-1}\alpha_i(r)$ on objects and $\phi(h, [g,(r,i)])=(h, g^{-1}\alpha_i(r))$ on arrows. Moreover, the morphism $\phi$ is $G$-equivariant in the ordinary sense (the group homomorphism is the identity).

The essential equivalence $\delta: G\ltimes Y_{\alpha}\overset{}{\to}I$ is given by projection on both objects and arrows, $\delta(h, [g,(r,i)])= r$. Both morphisms $\phi$ and $\delta$ are well defined and $\delta$ is open, surjective on objects and fully faithful.

We have that the following diagram is commutative:
$$
\xymatrix{ &I_{S_n}\ar[dr]^{\alpha}\ar[dl]_{\e}\ar[dd]_{\nu}&\\
I& &{G\ltimes X}\\
&{G\ltimes Y_{\alpha}}\ar[ul]^{\delta}="3" \ar[ur]_{\phi}="1"&
}
$$ 
since $\phi\nu((r,i))=\phi([e, (r,i)])=\alpha_i(r_i)$ and $\phi\nu(\tilde r_i)=\phi(k_i, [e, (r,i)])=(k_i, \alpha_i(r))$ for all $r\in \tilde I_i$.

Therefore there is a $2$-isomorphism between the generalized map $I\overset{\delta}{\gets}G\ltimes Y_{\alpha}\overset{\phi}{\to}G\ltimes X$  and the generalized path $I\overset{\e}{\gets}I_{S_n}\overset{\alpha}{\to}G\ltimes X$.

Observe that the identifications we have made in the quotient to obtain the space $Y_{\alpha}$ determine a gluing of the segments $I_i$ at the different levels of $G\times (I_{S_n})_0$ to obtain copies of the entire interval $I=[0,1]$. This gluing is determined by the group elements $k_1, \ldots, k_{n-1}$.

To define the map $\phi$ from the groupoid $G\ltimes Y_{\alpha}$ associated to the generalized path $\alpha$, we are concatenating the different pieces $\alpha_i$ in these different levels by multiplying by the correct group element to obtain an honest path in $X$.

\subsubsection{The homeomorphism $\gamma :Y_{\alpha}\to G\times I$}
For each map $\alpha:I_{S_n}\to G\ltimes X$, let's show now that the space $Y_{\alpha}$ we just constructed is $G$-equivariantly homeomorphic to the space $G\times I$, where the action on the latter is determined by the action of $G$ on $Y_{\alpha}$ given by $h[g,(r,i)]=[gh^{-1},(r,i)]$. We have that the action on $G\times I$  is given by
$$G\times (G\times I)\to G\times I$$
$$h,(g,r)=(gh^{-1},r).$$
We define the homeomorphism $\gamma_{}:Y_{\alpha}\to G\times I$ as
$$\gamma_{}([g,(r,i)])=((k_{i-1}\cdots k_1)^{-1}g, r)$$ for $i=1, \cdots, n$.   
The morphism $\gamma_{}$ depends only on $S_n$ and $k_1, \cdots, k_{n-1}$ and is independent on the actual paths $\alpha_1, \cdots, \alpha_{n}$.
The inverse morphism $\gamma^{-1}:G\times I\to Y_{\alpha}$ is given by 
$$\gamma^{-1}(h,r)=[k_{i-1}\cdots k_1h,(r,i)]$$ if $r\in I_i$. Moreover, 
we have that the homeomorphism $\gamma_{}$ is $G$-equivariant by construction.

\begin{defn} A {\em multiple G-path} in the groupoid $G\ltimes X$ is a generalized map $I\overset{}{\gets}G\ltimes (G\times I)\overset{\sigma}{\to}G\ltimes X$ where $\sigma$ is a $G$-equivariant map in the ordinary sense.
\end{defn}
\subsubsection{Equivalence of multiple $G$-paths}
 Given two multiple $G$-paths $I\overset{}{\gets}G\ltimes (G\times I)\overset{\sigma}{\to}G\ltimes X$ and $I\overset{}{\gets}G\ltimes (G\times I)\overset{\tau}{\to}G\ltimes X$,  they are equivalent if there exists a subdivision $S_n$ and $k_1,\cdots, k_{n-1}\in G$ such that the following diagram commutes  up to natural transformations
 $$
\xymatrix{ &G\ltimes (G\times I)\ar[dr]^{\sigma}\ar[dl]_{p}&\\
I& I_{S_n}\ar[u]_{\nu} \ar[d]^{\eta} &{G\times X}\\
&G\ltimes (G\times I)\ar[ul]^{p} \ar[ur]_{\tau}&
}
$$
where $\nu=\nu_{k_1,\cdots, k_{n-1}}$ and $\eta=\eta_{k_1,\cdots, k_{n-1}}$.

 Since $p$ is an essential equivalence, we have that $\nu\sim \eta$ and then $\sigma\nu\sim \tau\nu$. That means that there exists a natural transformation $T:  (I_{S_n})_0\to G\times X$ such that $T(r,i)$ is an arrow between $\sigma\nu(r,i)= \sigma( (k_{i-1}\cdots k_1)^{-1},r)$ and $\tau( (k_{i-1}\cdots k_1)^{-1},r)$. Therefore we have that the multiple $G$-paths are equivalent if there exists a subdivision $S_n$, $k_1,\cdots, k_{n-1}\in G$ and $g_1,\cdots, g_n\in G$ such that
$${g_i} \sigma((k_{i-1}\cdots k_1)^{-1},r)=\tau((k_{i-1}\cdots k_1)^{-1},r)\mbox{ if }r\in I_i.$$
 Since $\sigma$ is equivariant, we have that 
 $${g_i} (k_{i-1}\cdots k_1)\sigma(e,r)= (k_{i-1}\cdots k_1)\tau(e,r)\mbox{ if }r\in I_i.$$
 For $i=1$ this means that there exists $g_1\in G$ such that $\tau(e,r)= g_1\sigma(e,r)$. Since the interval $e\times I$ is connected, we have that $g_i=(k_{i-1}\cdots k_1) g_1 (k_{i-1}\cdots k_1)^{-1}$ for all $i=1,\ldots, n$. In other words, all other $g_i$, $i=2,\ldots, n$ are determined by $g_1$. Once that we have a  group element $g_1\in G$ that makes $\tau(e,r)= g_1\sigma(e,r)$ in the first piece of the interval, $r\in[0,r_1]$, then all the other pieces of the interval coming from the subdivision $S_n$ will also coincide since for all $r\in I_i$
 $${g_i} (k_{i-1}\cdots k_1)\sigma(e,r)= (k_{i-1}\cdots k_1)\tau(e,r)$$ and 
 $$g_i=(k_{i-1}\cdots k_1) g_1 (k_{i-1}\cdots k_1)^{-1}$$
 Then $${(k_{i-1}\cdots k_1) g_1 (k_{i-1}\cdots k_1)^{-1}} (k_{i-1}\cdots k_1)\sigma(e,r)= (k_{i-1}\cdots k_1)\tau(e,r)$$ which implies that $g_1\sigma(e,r)= \tau(e,r)$ for all $r\in I$.

 \begin{prop}
 Two multiple $G$-paths $\sigma$ and $\tau$ are equivalent if there exists $g\in G$ such that 
$$g\sigma(e,r)= \tau(e,r).$$
\end{prop}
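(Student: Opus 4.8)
The plan is to split the statement into its two implications. Necessity---that equivalence of $\sigma$ and $\tau$ forces the existence of such a $g$---requires no new work, since it is exactly the computation carried out in the paragraphs immediately preceding the statement. I would simply recall it: from a subdivision $S_n$ and elements $k_1,\dots,k_{n-1}\in G$ realizing the equivalence one extracts $g_1,\dots,g_n\in G$ with ${g_i}(k_{i-1}\cdots k_1)\sigma(e,r)=(k_{i-1}\cdots k_1)\tau(e,r)$ for $r\in[r_{i-1},r_i]$; putting $g=g_1$ and using the relations $g_i=(k_{i-1}\cdots k_1)g_1(k_{i-1}\cdots k_1)\inv$ together with the connectedness of $e\times I$ yields $g\sigma(e,r)=\tau(e,r)$ for all $r\in I$.

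For sufficiency---the content of the ``if'' in the statement---I would take the trivial subdivision $S_1=\{0,1\}$, so that $n=1$ and there are no elements $k_i$ to choose. Then $I_{S_1}$ is the unit groupoid $I$, the space $Y_\alpha$ is $G\times I$ with no identifications, and the canonical map $\nu=\eta\colon I\to G\ltimes(G\times I)$, $r\mapsto(e,r)$, is an equivariant essential equivalence: it is essentially surjective because the arrow $(g\inv,(e,r))$ runs from $(e,r)$ to $(g,r)$, and it is fully faithful because any arrow out of an object $(e,r)$ has target $(h\inv,r)$, so an arrow between $(e,r)$ and $(e,r')$ can only be the identity, forcing $r=r'$. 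With this choice the left-hand triangles of the defining diagram commute on the nose, $p\circ\nu=\id_I=p\circ\eta$, and it remains only to exhibit a natural transformation $\sigma\circ\nu\Rightarrow\tau\circ\eta$ between maps $I\to G\ltimes X$. I would take $T\colon I\to G\times X$, $T(r)=(g,\sigma(e,r))$: it is continuous, $T(r)$ has source $\sigma(e,r)=\sigma\nu(r)$ and target $g\sigma(e,r)=\tau(e,r)=\tau\eta(r)$, and naturality is automatic since $I$ has only identity arrows. Hence the diagram defining equivalence of multiple $G$-paths commutes up to natural transformation with $S_1$, so $\sigma$ and $\tau$ are equivalent.

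The only genuine obstacle lies in the sufficiency half, and it is a matter of bookkeeping rather than of ideas: one must confirm that the $n=1$ instance of $\nu$ really is an equivariant essential equivalence (this is the case $k_1,\dots,k_{n-1}$ empty of the general fact already established for $\nu_{k_1,\dots,k_{n-1}}$) and that, with $\nu=\eta$ this simple, all the coherence conditions of the $2$-morphism-of-generalized-maps diagram collapse to the single natural transformation $T$. Once that is checked, the equivalence is immediate; no analysis of reparametrizations or of finer subdivisions is needed, because the definition of equivalence of multiple $G$-paths only asks for \emph{some} common subdivision to work, and the trivial one already does.
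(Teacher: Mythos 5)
Your proposal is correct and follows essentially the same route as the paper: the necessity half is exactly the computation the paper carries out in the paragraphs before the statement (extracting $g_1$ and using connectedness of $e\times I$ to propagate $g_i=(k_{i-1}\cdots k_1)g_1(k_{i-1}\cdots k_1)\inv$ across the pieces), while the sufficiency half---which the paper leaves implicit---you supply in the natural way, via the trivial subdivision $S_1$ and the single natural transformation $T(r)=(g,\sigma(e,r))$. The verification that $\nu=\eta\colon r\mapsto(e,r)$ is an essential equivalence and that $T$ has the right source and target is accurate, so the argument is complete.
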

Then we have the group $G$ acting now on the space of equivariant maps $G\mmm(G\times I, X)$. Let $P'=G\ltimes G\mmm(G\times I, X)$ be the {\em multiple $G$-path groupoid}.

Since $\sigma(g,r)=g\sigma(e,r)$, we observe that a multiple $G$-path is determined by the honest path $\beta: I\to X$ given by $\beta(r)=\sigma(e,r)$.
Conversely, any path $\beta: I\to X$ can be made into a multiple $G$-path by putting 
$\sigma(g,r)=g\beta(r).$ Consider the translation groupoid of honest paths, given by the obvious action of $G$ on $X$. Let $P''= G\ltimes X^I$ where $X^I=\mmm(I,X)$.

We will prove next that all three characterizations of the path groupoid, as generalized paths, as multiple $G$-paths and as honest paths are equivalent.

\subsection{Equivalence of the different models for path groupoids }
Recall the definition of the path groupoid and the other two characterizations  introduced in the previous section. 
\begin{enumerate}
  \item The groupoid  $P=\colim \varphi \ltimes \colim \psi$ where  $M=\colim \psi$  is the space of classes of generalized paths.
  \item The groupoid $P'=G\ltimes G\mmm(G\times I, X)$ where $G\mmm(G\times I, X)$ is the space of $G$-equivariant maps.
  \item 
The groupoid $P''=G\ltimes X^I$ where $X^I$ is the free path space.
\end{enumerate}

\subsubsection{The equivalence of categories $\chi: P=\colim \varphi \ltimes \colim \psi\to P'=G\ltimes G\mmm(G\times I, X)$.} 
Recall that $M=\colim \psi$  is the space of classes of generalized paths, i.e. 
$$M=({\coprod_{\mathcal{C}_I} \mmm( I_{S_n},G\ltimes X)})/\sim$$
where $\sim$ is the equivalence relation generated by $\alpha\sim d_i^*(\alpha)$ for all $S_n$  and $d_i:S_n \rightarrow S_{n-1}$ and $\alpha\sim u^*(\alpha)$ for all $S_n$  and $u:S_n \rightarrow S_{n}$. We will use the same notation $(\alpha_1, \cdots, \alpha_n, k_1, \cdots, k_{n-1})$ to denote the elements in $M$.

The idea is to complete each piece $\alpha_i$ of the  generalized path $\alpha=(\alpha_1, \cdots, \alpha_n, k_1, \cdots, k_{n-1})$ to have the entire branch $\sigma_i$ of a multiple $G$-path $\sigma$.

\paragraph{The functor  $\chi: \colim \varphi \ltimes \colim \psi\to G\ltimes G\mmm(G\times I, X)$.}
 Given a generalized path $\alpha=(\alpha_1, \cdots, \alpha_n, k_1, \cdots, k_{n-1})$ for the subdivision $S_n$ of the interval $I$  we can define (as in the previous section)
 \begin{enumerate}
 \item a space $Y_{\alpha}=\{[(g,(r,i))] |\; (g,(r,i))\in G\times (I_{S_n})_0\}$ with the relation $(g,(r,i+1))\sim(k_i^{-1}g,(r,i))$ for all $r\in \tilde I_i$,
  \item a homeomorphism $\gamma_{\alpha}: G\ltimes Y_{\alpha} \to G\ltimes (G\times I)$,
    \item an essential equivalence $\nu_{\alpha}: I_{S_n} \to G\ltimes Y_{\alpha}$ and 
\item a generalized map $I\overset{\delta}{\gets}G\ltimes Y_{\alpha}\overset{\phi_{\alpha}}{\longrightarrow}G\ltimes X$ such that  $(\e, \alpha)\Rightarrow(\delta, \phi_{\alpha})$. 
    \end{enumerate}
    
We define $\chi: \colim \varphi \ltimes M \to G\ltimes G\mmm(G\times I, X)$ as $\chi(\alpha)=\phi_{\alpha}{\gamma_{\alpha}}^{-1}$ on objects and $\chi(g_1, \cdots, g_{n}, \alpha)=(g_1,\phi_{\alpha}{\gamma_{\alpha}}^{-1})$ on arrows.

Then $\chi(\alpha)(g,r)=\phi_{\alpha}{\gamma_{\alpha}}^{-1}(g,r)=\phi_{\alpha}[k_{i-1}\cdots k_1g,(r,i)]=(k_{i-1}\cdots k_1g)^{-1}\alpha_i(r)$ if $r\in I_i$.
We are sending each generalized path $\alpha=(\alpha_1, \cdots, \alpha_n, k_1, \cdots, k_{n-1})$ into the multiple $G$-path $\sigma$ given by
$$\sigma(g,r)=g^{-1} (k_{i-1}\cdots k_1)^{-1}\alpha_i(r)\mbox{ if }r\in I_i.$$
In particular, we have that the branch $\sigma_e$ corresponding to the interval $e\times I$ is given by the following concatenation:
$$\sigma(e,r)=\alpha_1(r)*{k_1}^{-1} \alpha_2(r)*(k_2k_1)^{-1} \alpha_3(r)*\cdots *  (k_{n-1}\cdots k_1)^{-1}\alpha_n(r)$$

\begin{tikzpicture}[line cap=round,line join=round,>=stealth,x=0.6cm,y=0.6cm]
\clip(-3.,-6.) rectangle (18.,2.);
\draw [line width=1pt] (-2.,-4.)-- (3.,-4.);
\draw [line width=1pt] (-2.,-3.)-- (3.,-3.);
\draw [line width=1pt] (-2.,0.)-- (3.,0.);
\draw [->] (4.,-2.) -- (6.,-2.);
\draw [line width=1pt] (8.02,0.)-- (8.04,0.06)-- (8.1,0.12)-- (8.18,0.2)-- (8.26,0.24)-- (8.34,0.28)-- (8.44,0.34)-- (8.52,0.36)-- (8.58,0.38)-- (8.64,0.4)-- (8.7,0.44)-- (8.78,0.48)-- (8.88,0.5)-- (8.98,0.52)-- (9.06,0.54)-- (9.12,0.56)-- (9.2,0.56)-- (9.36,0.58)-- (9.44,0.58)-- (9.52,0.58)-- (9.62,0.58)-- (9.72,0.56)-- (9.78,0.54)-- (9.88,0.52)-- (9.96,0.52)-- (10.04,0.48)-- (10.1,0.44)-- (10.18,0.4)-- (10.24,0.38)-- (10.3,0.36)-- (10.38,0.32)-- (10.44,0.3)-- (10.5,0.26)-- (10.58,0.22)-- (10.64,0.18)-- (10.74,0.14)-- (10.8,0.1)-- (10.88,0.08)-- (10.94,0.06)-- (11.06,0.04)-- (11.16,0.02)-- (11.24,0.02)-- (11.36,0.)-- (11.48,0.)-- (11.58,0.)-- (11.66,0.02)-- (11.72,0.04)-- (11.8,0.04)-- (11.86,0.06)-- (11.94,0.1)-- (12.02,0.14)-- (12.1,0.16)-- (12.18,0.2)-- (12.24,0.24)-- (12.32,0.28)-- (12.4,0.32)-- (12.52,0.36)-- (12.6,0.42)-- (12.66,0.44)-- (12.74,0.5)-- (12.84,0.54)-- (12.98,0.6)-- (13.06,0.64)-- (13.14,0.66)-- (13.22,0.7)-- (13.32,0.74)-- (13.46,0.76)-- (13.58,0.8)-- (13.76,0.82)-- (13.88,0.84)-- (14.1,0.86)-- (14.28,0.88)-- (14.58,0.92)-- (14.84,0.92)-- (15.1,0.94)-- (15.34,0.96)-- (15.54,0.96)-- (15.7,0.96)-- (15.86,0.96)-- (15.96,0.96)-- (16.06,0.96)-- (16.14,0.96)-- (16.22,0.94);
\draw [line width=1pt] (8.,-3.98)-- (8.02,-3.92)-- (8.08,-3.86)-- (8.16,-3.78)-- (8.24,-3.74)-- (8.32,-3.7)-- (8.42,-3.64)-- (8.5,-3.62)-- (8.56,-3.6)-- (8.62,-3.58)-- (8.68,-3.54)-- (8.76,-3.5)-- (8.86,-3.48)-- (8.96,-3.46)-- (9.04,-3.44)-- (9.1,-3.42)-- (9.18,-3.42)-- (9.34,-3.4)-- (9.42,-3.4)-- (9.5,-3.4)-- (9.6,-3.4)-- (9.7,-3.42)-- (9.76,-3.44)-- (9.86,-3.46)-- (9.94,-3.46)-- (10.02,-3.5)-- (10.08,-3.54)-- (10.16,-3.58)-- (10.22,-3.6)-- (10.28,-3.62)-- (10.36,-3.66)-- (10.42,-3.68)-- (10.48,-3.72)-- (10.56,-3.76)-- (10.62,-3.8)-- (10.72,-3.84)-- (10.78,-3.88)-- (10.86,-3.9)-- (10.92,-3.92)-- (11.04,-3.94)-- (11.14,-3.96)-- (11.22,-3.96)-- (11.34,-3.98)-- (11.46,-3.98)-- (11.56,-3.98)-- (11.64,-3.96)-- (11.7,-3.94)-- (11.78,-3.94)-- (11.84,-3.92)-- (11.92,-3.88)-- (12.,-3.84)-- (12.08,-3.82)-- (12.16,-3.78)-- (12.22,-3.74)-- (12.3,-3.7)-- (12.38,-3.66)-- (12.5,-3.62)-- (12.58,-3.56)-- (12.64,-3.54)-- (12.72,-3.48)-- (12.82,-3.44)-- (12.96,-3.38)-- (13.04,-3.34)-- (13.12,-3.32)-- (13.2,-3.28)-- (13.3,-3.24)-- (13.44,-3.22)-- (13.56,-3.18)-- (13.74,-3.16)-- (13.86,-3.14)-- (14.08,-3.12)-- (14.26,-3.1)-- (14.56,-3.06)-- (14.82,-3.06)-- (15.08,-3.04)-- (15.32,-3.02)-- (15.52,-3.02)-- (15.68,-3.02)-- (15.84,-3.02)-- (15.94,-3.02)-- (16.04,-3.02)-- (16.12,-3.02)-- (16.2,-3.04);
\draw [line width=1pt] (7.96,-3.1)-- (7.98,-3.04)-- (8.04,-2.98)-- (8.12,-2.9)-- (8.2,-2.86)-- (8.28,-2.82)-- (8.38,-2.76)-- (8.46,-2.74)-- (8.52,-2.72)-- (8.58,-2.7)-- (8.64,-2.66)-- (8.72,-2.62)-- (8.82,-2.6)-- (8.92,-2.58)-- (9.,-2.56)-- (9.06,-2.54)-- (9.14,-2.54)-- (9.3,-2.52)-- (9.38,-2.52)-- (9.46,-2.52)-- (9.56,-2.52)-- (9.66,-2.54)-- (9.72,-2.56)-- (9.82,-2.58)-- (9.9,-2.58)-- (9.98,-2.62)-- (10.04,-2.66)-- (10.12,-2.7)-- (10.18,-2.72)-- (10.24,-2.74)-- (10.32,-2.78)-- (10.38,-2.8)-- (10.44,-2.84)-- (10.52,-2.88)-- (10.58,-2.92)-- (10.68,-2.96)-- (10.74,-3.)-- (10.82,-3.02)-- (10.88,-3.04)-- (11.,-3.06)-- (11.1,-3.08)-- (11.18,-3.08)-- (11.3,-3.1)-- (11.42,-3.1)-- (11.52,-3.1)-- (11.6,-3.08)-- (11.66,-3.06)-- (11.74,-3.06)-- (11.8,-3.04)-- (11.88,-3.)-- (11.96,-2.96)-- (12.04,-2.94)-- (12.12,-2.9)-- (12.18,-2.86)-- (12.26,-2.82)-- (12.34,-2.78)-- (12.46,-2.74)-- (12.54,-2.68)-- (12.6,-2.66)-- (12.68,-2.6)-- (12.78,-2.56)-- (12.92,-2.5)-- (13.,-2.46)-- (13.08,-2.44)-- (13.16,-2.4)-- (13.26,-2.36)-- (13.4,-2.34)-- (13.52,-2.3)-- (13.7,-2.28)-- (13.82,-2.26)-- (14.04,-2.24)-- (14.22,-2.22)-- (14.52,-2.18)-- (14.78,-2.18)-- (15.04,-2.16)-- (15.28,-2.14)-- (15.48,-2.14)-- (15.64,-2.14)-- (15.8,-2.14)-- (15.9,-2.14)-- (16.,-2.14)-- (16.08,-2.14)-- (16.16,-2.16);
\draw (0.16,-1.28) node[anchor=north west] {$\vdots$};
\draw (11.62,-1.22) node[anchor=north west] {$\vdots$};
\draw (4.82,-0.98) node[anchor=north west] {$\sigma$};
\draw (-2.6,-3.42) node[anchor=north west] {$e$};
\draw (-2.6,-2.4) node[anchor=north west] {$g$};
\draw (16.58,-2.5) node[anchor=north west] {$\sigma_{e}$};
\draw (8.46,-3.54) node[anchor=north west] {$\alpha_1$};
\draw (9.5,-4.12) node[anchor=north west] {$k_1^{-1}\alpha_2$};
\draw (12.16,-3.58) node[anchor=north west] {$(k_1k_2)^{-1}\alpha_3$};
\begin{scriptsize}
\draw [fill=black] (8.,-3.98) circle (1.5pt);
\draw [fill=black] (9.36,-3.4) circle (1.5pt);
\draw [fill=black] (11.229729729729732,-3.961621621621621) circle (1.5pt);
\draw [fill=black] (13.892459016393444,-3.1370491803278693) circle (1.5pt);
\draw [fill=black] (16.2,-3.04) circle (1.5pt);
\end{scriptsize}
\end{tikzpicture}

On arrows, we send $((g_1, \cdots, g_{n}), \alpha_1, \cdots, \alpha_n, k_1, \cdots, k_{n-1})\in \colim \varphi \times \colim \psi$ into the arrow $(g_1, \sigma_{\alpha})$ where $\sigma_{\alpha}$ is defined as before.

We will show next that $\chi$ is an equivariant map between translation groupoids where the group homomorphism is given by the projection on the first coordinate. 

Let $\alpha'=(g_1\alpha_1, \ldots, g_{n}\alpha_n, g_2k_1{g_1}^{-1}, \ldots, g_n k_{n-1}{g_{n-1}}^{-1})$, we have that
$$\chi(\alpha')=g_1\chi((\alpha_1, \ldots, \alpha_n, k_1, \cdots, k_{n-1}))$$
since $$\sigma_{\alpha'}(e,r)=g_1\alpha_1(r)*({g_2k_1{g_1}^{-1}})^{-1} g_2\alpha_2(r)*\ldots *  (g_nk_{n-1}{g_{n-1}}^{-1}\ldots g_2k_1{g_1}^{-1})^{-1}g_n\alpha_n(r)$$
$$=
g_1(\alpha_1(r)*{k_1}^{-1} \alpha_2(r)*(k_2k_1)^{-1} \alpha_3(r)*\cdots *  (k_{n-1}\cdots k_1)^{-1}\alpha_n(r)=g_1\sigma_{\alpha}(e,r).
$$

\paragraph{The functor  $\chi^{-1}:      G\ltimes G\mmm(G\times I, X) \to  \colim \varphi \ltimes \colim \psi  $.}

Consider the continuous functor given by $\chi^{-1}(\sigma)=\sigma|_{e\times I}\circ i_{e}$ on objects and $\chi^{-1}((g, \sigma))=(g, \sigma|_{e\times I}\circ i_{e})$ on arrows, where $i_e:I\to e\times I$ sends $r\in I$ to $(e,r)\in e\times I$. Recall that by our notation convention the right side means in both cases the class in the colimit. Note that the generalized path $\sigma|_{e\times I}\circ i_{e}$ corresponds to a subdivision $S_1$ with only one subinterval, that is, $\sigma|_{e\times I}\circ i_{e}$ is an honest path.

\paragraph{$\chi$ and  $\chi^{-1}$ are inverse functors up to natural transformation.}

The composition $\chi\circ\chi^{-1}: G\ltimes G\mmm(G\times I, X)\to G\ltimes G\mmm(G\times I, X)$ is the identity map since the generalized map $\alpha_{\sigma}$ associated to $\sigma$ has only one piece. On objects:	$$\chi\circ\chi^{-1}(\sigma)=\chi(\alpha_{\sigma})=\sigma_{\alpha_{\sigma}}$$ such that 
$\sigma_{\alpha_{\sigma}}(g,r)=g^{-1}\sigma(e,r)=	\sigma(g,r)$, then $\sigma_{\alpha_{\sigma}}=\sigma$. On arrows 
$$\chi\circ\chi^{-1}(g,\sigma)=\chi(g, \sigma_{\alpha_{\sigma}})=\chi(g,\sigma)=(g, \sigma).$$
We will prove next that the composition in the other direction is equivalent by a natural transformation to the identity. We have that $\chi^{-1}\circ \chi: \colim \varphi \ltimes \colim \psi\to \colim \varphi \ltimes \colim \psi$ sends each generalized path class $\alpha=(\alpha_1, \cdots, \alpha_n, k_1, \cdots, k_{n-1})$ to the  generalized path $\alpha_{\sigma_{\alpha}}$ where
$$\alpha_{\sigma_{\alpha}}(r)=\sigma_{\alpha}(e,r)=\alpha_1(r)*{k_1}^{-1} \alpha_2(r)*(k_2k_1)^{-1} \alpha_3(r)*\cdots *  (k_{n-1}\cdots k_1)^{-1}\alpha_n(r)
$$ and each arrow $((g_1, \cdots, g_{n}), \alpha_1, \cdots, \alpha_n, k_1, \cdots, k_{n-1})\in \colim \varphi \times \colim \psi$ to the arrow $(g_1,\alpha_{\sigma_{\alpha}})$.

There is a natural transformation $T:  \colim \psi \to \colim \varphi \times \colim \psi$ given by $$T(\alpha)=((\id, {k_1}^{-1}, (k_2k_1)^{-1},\ldots  (k_{n-1}\cdots k_1)^{-1}), (\alpha_1, \cdots, \alpha_n, k_1, \cdots, k_{n-1}))$$
which is an arrow between $\alpha$ and $\alpha_{\sigma_{\alpha}}$ since 
\begin{multline*}
(\id, {k_1}^{-1}, (k_2k_1)^{-1},\ldots,  (k_{n-1}\cdots k_1)^{-1})(\alpha_1, \ldots, \alpha_n, k_1, \ldots, k_{n-1})=\\
((\id \alpha_1, {k_1}^{-1}\alpha_2, \ldots,  (k_{n-1}\cdots k_1)^{-1}\alpha_n),
( {k_1}^{-1}k_1, \ldots,  (k_{n-1}\cdots k_1)^{-1}k_{n-1}(k_{n-2}\cdots k_1)))
\\
(( \alpha_1, {k_1}^{-1}\alpha_2, \ldots,  (k_{n-1}\cdots k_1)^{-1}\alpha_n),
( \id, \ldots, \id)).
\end{multline*}
This generalized path is equal to the concatenation of the $n$ pieces $$\alpha_1(r)*{k_1}^{-1} \alpha_2(r)*(k_2k_1)^{-1} \alpha_3(r)*\cdots *  (k_{n-1}\cdots k_1)^{-1}\alpha_n(r)$$ since the connecting arrows are all identities. Moreover, $T$ satisfies the naturality condition and is continuous by the universal property of the colimit.

Therefore $\chi$ is an equivalence of categories between the groupoid of generalized paths and the groupoid of multiple $G$-paths. We will see next that the groupoid of multiple $G$-paths is just the free path space $X^I$ together with $G$ acting on it.

\subsubsection{The isomorphism of categories $\xi: P'=G\ltimes G\mmm(G\times I, X)\to P''=G\ltimes X^I$.} 
To construct this isomorphism we will use the fact that a multiple $G$-path $\sigma$  is determined by its value at the branch $\sigma_e$ corresponding to the interval $e\times I$,  since $\sigma$ is equivariant.

We define $\xi(\sigma)=\sigma i_e\in X^I$ on objects and 
$\xi(g,\sigma)=(g, \sigma i_e)$ on arrows. Conversely, $\xi^{-1}(\beta)=\sigma_{\beta}$ where
$\sigma_{\beta}(g,r)=g^{-1}\beta(r)$.
The functor $\xi: G\ltimes G\mmm(G\times I, X)\to G\ltimes X^I$ is an isomorphism of categories since it has a strict inverse functor: $\xi\circ\xi^{-1}=\id_{G\ltimes X^I}$ and $\xi^{-1}\circ\xi=\id_{G\ltimes GMap(G\times I, X)}$ satisfying that the restriction $\xi$ and the action $\xi^{-1}$ are both continuous.

\begin{thm}
All models for the path groupoid of $G\ltimes X$ are equivalent.
$$P(G\ltimes X)=\colim \varphi \ltimes \colim \psi\sim G\ltimes G\mmm(G\times I, X)= G\ltimes X^I.$$
\end{thm}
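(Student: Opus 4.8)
The plan is to assemble the theorem directly from the two comparison functors $\chi$ and $\xi$ constructed in the preceding subsections, recognizing each as an essential equivalence of translation groupoids and then invoking the transitivity of Morita equivalence established in Section 2.

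First I would record that $\xi\colon P'=G\ltimes G\mmm(G\times I,X)\to P''=G\ltimes X^I$ is an isomorphism of categories, since it has the strict two-sided inverse $\xi^{-1}(\beta)=\sigma_{\beta}$; in particular it is an equivariant essential equivalence, hence an invertible $1$-morphism $P'\to P''$ in $\MTrG$. Next I would treat $\chi\colon P=\colim\varphi\ltimes\colim\psi\to P'$. Rather than verifying the pullback conditions (i)--(ii) by hand, I would invoke the observation from Section 2 that a (continuous) quasi-inverse together with (continuous) natural transformations already forces an essential equivalence: we have the continuous equivariant functor $\chi^{-1}$, the strict equality $\chi\circ\chi^{-1}=\id_{P'}$, and the natural transformation $T\colon\id_{P}\Rightarrow\chi^{-1}\circ\chi$, so taking $\delta=\chi^{-1}$, $\eta=\id_{P'}$ and $\nu=T^{-1}$ shows that $\chi$ is an equivariant essential equivalence. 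Then the composite $\xi\circ\chi\colon P\to P''$ is again an equivariant essential equivalence (with quasi-inverse $\chi^{-1}\circ\xi^{-1}$ and the whiskered natural transformations), so the span
\[
P\xleftarrow{\ \id\ }P\xrightarrow{\ \xi\circ\chi\ }P''
\]
witnesses $P\sim_M P''$; and since $P'$ sits between $P$ and $P''$ via the essential equivalences $\chi$ and $\xi$, transitivity of Morita equivalence gives that all three models coincide in $\MTrG$, which is the assertion of the theorem.

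The step that needs genuine care -- the main obstacle -- is upgrading ``$\chi$ is an equivalence of categories'' from the purely categorical level to $\TrG$, i.e. checking that $\chi^{-1}$ and $T$ are continuous for the inductive topologies on $P_0=M$ and $P_1=H\times M$. I would do this levelwise: for each subdivision $S_n$ the assignments $\alpha\mapsto\gamma_{\alpha}$ and $\alpha\mapsto\phi_{\alpha}$ depend continuously on the path data $(\alpha_1,\dots,\alpha_n,k_1,\dots,k_{n-1})$ (here $G$ is discrete), so each map $\mmm(I_{S_n}\to G\ltimes X)\to G\mmm(G\times I,X)$ and each piece of $T$ is continuous; I would then check these are compatible with the transition maps $\sigma_i\ltimes d_i^{*}$, which is precisely the statement that the concatenation $\alpha_1\ast k_1^{-1}\alpha_2\ast\cdots\ast(k_{n-1}\cdots k_1)^{-1}\alpha_n$ and the transformation $T$ are unchanged when one subinterval is subdivided and an identity arrow inserted. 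Continuity of the induced maps out of the colimit is then automatic, as is well-definedness of the group projection $H=\colim\varphi\to G$, $(g_1,\dots,g_n)\mapsto g_1$, since $\sigma_i$ only duplicates a coordinate. Once this continuity is in place, everything else in the argument is bookkeeping already carried out above.
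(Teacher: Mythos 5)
Your proposal is correct and follows essentially the same route as the paper: the theorem is assembled from the explicit functors $\chi$ (with strict right inverse $\chi^{-1}$ and the natural transformation $T\colon\id_P\Rightarrow\chi^{-1}\circ\chi$) and the strict isomorphism $\xi$, exactly as in the two preceding subsections. Your extra care in invoking the Section~2 criterion (continuous quasi-inverse plus continuous natural transformations forces an essential equivalence) and in checking continuity levelwise over the colimit is a welcome but non-divergent elaboration of what the paper leaves implicit.
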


\begin{remark} We can also prove that any generalized map is equivalent to a strict map in the context of translation groupoids, without using groupoids atlases. It was proven by Pronk and Scull in \cite{Pronk:2010} that any generalized map
 $$I\overset{\delta}{\gets}\Int\overset{\beta}{\to}G\ltimes X$$ between translation groupoids is equivalent to a generalized map
 $$I\overset{\e}{\gets}G\ltimes Y\overset{\beta'}{\to}G\ltimes X$$
where the middle groupoid is a translation groupoid. In the same paper, they proved that any essential equivalence between translation groupoids has to be of some prescribed form. In our case, this implies that the essential equivalence
$I\overset{\e}{\gets}G\ltimes Y$ satisfies $e=G/K$ and $I=Y/K$ where $K$ acts freely on $Y$. Hence $K=G$ and $G$ acts freely on Y. Since $G$ acts also properly on $Y$ we have that $Y=G\times I$.  Then  any generalized map $(\delta, \beta)$ is equivalent to a generalized map 
$$I\overset{}{\gets}G\ltimes (G\times I)\overset{\beta''}{\to}G\ltimes X.$$
Now applying our isomorphism $\xi : P' \to P''$ to the right leg of the span, we obtain $\alpha= \xi(\beta'')\in X^I$ which gives the equivalent strict map
$I\overset{\alpha}{\to}G\ltimes X$.
\end{remark}

\subsection{Functoriality and Morita invariance of the path groupoid} 
In this section we will see that the path groupoid is functorial and that the path groupoid is well defined up to Morita equivalence.

\subsubsection{Functoriality}
We will show that an equivariant map between translation groupoids, induces an equivariant map between the path groupoids.

For a strict equivariant map $\varphi\ltimes f :G\ltimes X\to H\ltimes Y $ we have an induced map $\varphi_*\ltimes f_* :G\ltimes X^I\to H\ltimes Y^I $ defined by $f_*(\alpha)=f\circ \alpha$ for all $\alpha \in X^I$ and $\varphi_*=\varphi$.
We construct now an equivariant map $P(\varphi\ltimes f) :   P( G \ltimes X) \rightarrow P( H \ltimes Y)$ between the colimit constructions.

For every $n$ we have induced maps 
$$
\left (\varphi\ltimes f \right)_* : \mmm( I_{S_n},G\ltimes X) \rightarrow \mmm( I_{S_n},H\ltimes Y)
$$
in terms of the description $\mmm( I_{S_n},G\ltimes X)= G^n \ltimes (X^I)^n \times_{X^{n-1}} G^{n-1}$, this map corresponds just to $ \varphi^n \ltimes \left (f^{n-1}_* \times \varphi^n  \right)$. By taking the colimit we obtain an equivariant map $P(\varphi\ltimes f) :   P( G \ltimes X) \rightarrow P( H \ltimes Y)$.

Similarly, we have an induced map $\varphi_* \ltimes f_* : G\ltimes G\mmm(G\times I, X) \rightarrow H \ltimes H\mmm(H\times I, Y)$ between the multiple $G$-path groupoids. We consider 
an equivariant map $(G\times I)\overset{\sigma}{\to} X$ and define  $f_*(\sigma): H \times I \rightarrow Y$ by $f_*(\sigma)(h,r)=h^{-1} f( \sigma(e,r))$.

In any of the three models the functoriality is easy to check and we have,
\begin{thm}
The path groupoid of $G\ltimes X$ is functorial for equivariant maps.
\end{thm}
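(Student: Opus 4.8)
The plan is to check functoriality in the third model $P(G\ltimes X)=G\ltimes X^I$ and then use the preceding theorem, which identifies all three models, to transfer the statement to the colimit model and the multiple-$G$-path model. The key observation that makes the verification routine is that $(-)^I=\mmm(I,-)$ is an endofunctor on $\Top$: on a strict equivariant map $\varphi\ltimes f\colon G\ltimes X\to H\ltimes Y$ the induced morphism $P(\varphi\ltimes f)=\varphi_*\ltimes f_*$ is just this functor applied on arrows, $f_*(\alpha)=f\circ\alpha$, with $\varphi_*=\varphi$; the translation-groupoid structure merely records that $f_*$ is compatible with the group homomorphism $\varphi$.

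First I would confirm that $\varphi_*\ltimes f_*$ is a well-defined equivariant map of translation groupoids: $f\circ\alpha$ is continuous for every $\alpha\in X^I$, and it is $\varphi$-equivariant because $f_*(g\alpha)(r)=f(g\,\alpha(r))=\varphi(g)\,f(\alpha(r))=\bigl(\varphi(g)\,f_*(\alpha)\bigr)(r)$, using the defining identity $f(gx)=\varphi(g)f(x)$; on arrows the map is $(g,\alpha)\mapsto(\varphi(g),f\circ\alpha)$, which visibly commutes with source, target, unit and multiplication. Preservation of identities is then immediate, since $(\id_X)_*(\alpha)=\alpha$ and $(\id_G)_*=\id_G$, so $P(\id_{G\ltimes X})=\id_{P(G\ltimes X)}$. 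Preservation of composition reduces to associativity of composition of maps: for $\varphi\ltimes f\colon G\ltimes X\to H\ltimes Y$ and $\psi\ltimes g\colon H\ltimes Y\to K\ltimes Z$ one has $(g\circ f)_*(\alpha)=(g\circ f)\circ\alpha=g\circ(f\circ\alpha)=g_*\bigl(f_*(\alpha)\bigr)$ and $(\psi\circ\varphi)_*=\psi_*\circ\varphi_*$, whence $P\bigl((\psi\ltimes g)\circ(\varphi\ltimes f)\bigr)=P(\psi\ltimes g)\circ P(\varphi\ltimes f)$. If one wants the statement at the level of $2$-categories, a natural transformation between two equivariant maps post-composes to a natural transformation between the induced path-space maps, and the coherence is again formal.

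The only point that needs more than a one-line check is the consistency of these induced maps across the three models, i.e.\ that the equivalences $\chi$ and $\xi$ from the previous subsection are natural in $G\ltimes X$. For $\xi$ this is transparent, since $\xi$ evaluates a multiple $G$-path on the branch $e\times I$ and $f_*$ commutes with evaluation. For the colimit model I would verify that the maps induced coordinatewise by $f_*$ on the path factors and by $\varphi$ on the group factors, $(\varphi\ltimes f)_*\colon\mmm(I_{S_n},G\ltimes X)\to\mmm(I_{S_n},H\ltimes Y)$, commute with the structure maps $\sigma_i\ltimes d_i^*$ of the diagram $\varPhi$ --- which they do, because $f_*$ commutes with restriction of paths to subintervals and with insertion of the identity group element --- so they descend to a well-defined equivariant map $P(\varphi\ltimes f)$ on $\colim\varPhi$ that is intertwined with $\chi$ and with the $G\ltimes X^I$-model map. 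I expect this compatibility with the colimit structure maps to be the only genuine bookkeeping; everything else is formal, and together with the equivalence of models from the preceding theorem it yields functoriality in all three descriptions simultaneously.
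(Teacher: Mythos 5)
Your proposal is correct and follows essentially the same route as the paper: the paper defines the induced maps $\varphi_*\ltimes f_*$ in each of the three models (including the coordinatewise maps $\varphi^n\ltimes(f^{n-1}_*\times\varphi^n)$ on the colimit diagram) and then simply remarks that ``in any of the three models the functoriality is easy to check,'' which is exactly the verification you carry out. Your closing point about compatibility of the induced maps with the equivalences $\chi$ and $\xi$ is stated in the paper as the separate naturality theorem immediately following this one.
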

Moreover, the equivalence of the three models for the path groupoid is natural.
\begin{thm}
For a strict equivariant map $\varphi\ltimes f :G\ltimes X\to H\ltimes Y $ the following diagram is commutative:
$$
\xymatrix{ P(G\ltimes X) \ar[r]^{P(\varphi\ltimes f)} \ar[d]^{\chi}  &
P(H\ltimes Y) \ar[d]^{\chi}  \\
G \ltimes G\mmm(G\times I, X) \ar[r]^{\varphi_*\ltimes f_*} \ar[d]^{\xi} &H \ltimes H\mmm(H\times I, Y) \ar[d]^{\xi} \\
G \ltimes X^I \ar[r]^{\varphi_* \ltimes f_*} & H \ltimes Y^I
}
$$
\end{thm}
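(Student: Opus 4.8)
The plan is to establish the commutativity of the diagram by splitting it into its two constituent squares and, in each, verifying the equality of the two composite equivariant maps separately on objects and on arrows, using the explicit formulas recorded in the preceding subsections. Since all four corners are translation groupoids and all the morphisms are honest equivariant maps (the group homomorphism being $\varphi$ throughout), it suffices to check that the underlying maps of object spaces agree and that the underlying group homomorphisms agree; the arrow maps are then determined by these.

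For the lower square this is immediate from the definitions. On objects, going right then down sends a multiple $G$-path $\sigma$ to $\xi\bigl(f_*(\sigma)\bigr)=f_*(\sigma)\circ i_e$, the honest path $r\mapsto f_*(\sigma)(e,r)=f(\sigma(e,r))$; going down then right sends $\sigma$ to $f_*\bigl(\xi(\sigma)\bigr)=f\circ(\sigma\circ i_e)$, the honest path $r\mapsto f(\sigma(e,r))$. These coincide on the nose, and the same computation with the group component shows that the lower square commutes strictly.

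The upper square carries the real content. Here I would use that $\chi$ sends a $G$-path class $\alpha=(\alpha_1,\dots,\alpha_n,k_1,\dots,k_{n-1})$ to the multiple $G$-path $\sigma_\alpha$ with $\sigma_\alpha(g,r)=g^{-1}(k_{i-1}\cdots k_1)^{-1}\alpha_i(r)$ for $r\in[r_{i-1},r_i]$, that $\chi(g_1,\dots,g_n,\alpha)=(g_1,\sigma_\alpha)$, and that $P(\varphi\ltimes f)$ is, at the $S_n$-stage, given on objects by $(\alpha_1,\dots,\alpha_n,k_1,\dots,k_{n-1})\mapsto(f\alpha_1,\dots,f\alpha_n,\varphi(k_1),\dots,\varphi(k_{n-1}))$ and on the structure group $G^n$ by $\varphi^n$, after which one passes to the colimit. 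Tracing $\alpha$ right then down produces the multiple $H$-path
\[
\tau(h,r)=h^{-1}\bigl(\varphi(k_{i-1})\cdots\varphi(k_1)\bigr)^{-1}f(\alpha_i(r)),\qquad r\in[r_{i-1},r_i],
\]
while tracing $\alpha$ down then right produces
\[
f_*(\sigma_\alpha)(h,r)=h^{-1}f\bigl((k_{i-1}\cdots k_1)^{-1}\alpha_i(r)\bigr),\qquad r\in[r_{i-1},r_i].
\]
These agree because $\varphi$ is a group homomorphism, so $\varphi(k_{i-1})\cdots\varphi(k_1)=\varphi(k_{i-1}\cdots k_1)$, and because $f$ satisfies $f(gx)=\varphi(g)f(x)$, so $f\bigl((k_{i-1}\cdots k_1)^{-1}\alpha_i(r)\bigr)=\varphi\bigl((k_{i-1}\cdots k_1)^{-1}\bigr)f(\alpha_i(r))$; the arrow components likewise both give $(\varphi(g_1),\tau)$. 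Before comparing representatives one should note that both $\chi$ and $P(\varphi\ltimes f)$ are compatible with the structure maps $\sigma_i\ltimes d_i^*$ of the diagram $\varPhi$ --- for $\chi$ this is part of its being well defined on the colimit, and for $P(\varphi\ltimes f)$ it is clear from the level-wise formula --- so testing the identity on a single representative of each class suffices.

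I expect the only genuine obstacle to be bookkeeping: keeping track of the reparametrizations hidden in the concatenation $*$, using that $f$ applied to a concatenation is the concatenation of the $f$-images (since $f$ is applied pointwise, so $f(\beta_1*\beta_2)=(f\beta_1)*(f\beta_2)$), and being careful with the order of the factors $k_{i-1}\cdots k_1$ under $\varphi$. Once these are arranged, the theorem follows by pasting the two commuting squares; in particular the outer rectangle, the compatibility of $\varphi_*\ltimes f_*$ with the composite equivalence $P(G\ltimes X)\simeq G\ltimes X^I$, is then immediate.
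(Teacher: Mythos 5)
Your verification is correct and is exactly the routine check the paper intends: the paper states this naturality theorem without proof (as a consequence of the explicit formulas for $\chi$, $\xi$, $P(\varphi\ltimes f)$ and $f_*$), and your square-by-square computation, including the use of $f(gx)=\varphi(g)f(x)$ and multiplicativity of $\varphi$ in the upper square and the strict equality in the lower square, supplies precisely that omitted verification. Your remark about compatibility with the structure maps $\sigma_i\ltimes d_i^*$ before testing on representatives is the right care to take when working in the colimit.
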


\subsubsection{Morita Invariance}
We will start by proving that 
an essential equivalence $ G\times X \rightarrow H \times Y$ induces an essential equivalence between the path groupoids, $P( G \ltimes X) \rightarrow P( H \ltimes Y)$.

This will give that for a given Morita equivalence
 \[G\ltimes X\overset{\sigma}{\gets}G'\ltimes X'\overset{\e}{\to}H\ltimes Y\] where $\e$ and $\sigma$ are essential equivalences, we have
induced essential equivalences
\[P(G\ltimes X)\overset{P(\sigma)}{\gets}P(G'\ltimes X')\overset{P(\e)}{\to}P(H\ltimes Y).\]

\begin{prop}\label{ess} If $\varphi\ltimes f :G\ltimes X\to H\ltimes Y $  is an essential equivalence, then $P(\varphi\ltimes f)$ is an essential equivalence.
\end{prop}

\begin{proof}
\begin{enumerate}
\item $P(\varphi\ltimes f)$ is fully faithful. We will show that $P( G \ltimes X)_1$ is  the following pullback of topological spaces:
\[\xymatrix{
P( G \ltimes X)_1 \ar[r]^{\e} \ar[d]_{(s,t)}& P( H \ltimes Y)_1 \ar[d]^{(s,t)} \\
P( G \ltimes X)_0\times P( G \ltimes X)_0 \ar[r]^{\e \times \e} & P( H \ltimes Y)_0\times P( H \ltimes Y)_0}\]

Specifically we have to prove that the natural map $\xi$  from $P( G \ltimes X)_1$ to the fibered product $ P( G \ltimes X)_0\times P( G \ltimes X)_0 \times_{P( H \ltimes Y)_0\times P( H \ltimes Y)_0)} P( H \ltimes Y)_1$ is a homeomorphism.

Let us define the inverse map $\xi^{-1}$. For $\alpha,\beta \in P( G \ltimes X)_0$ and an element $\theta \in P( H \ltimes Y)_1$ with $s(\theta)=\alpha,t(\theta)=\alpha$ we can assume that there is a subdivision of the interval such that $\alpha,\beta$ are represented both by elements of $\mmm( I_{S_n},G\ltimes X)$ and $\theta$ by an element of $H^n$. Therefore we have $\alpha=(\alpha_1,  \dots, \alpha_n,k_1,\dots,k_{n-1})$ and $\beta=(\beta_1,  \dots, \beta_n,k'_1,  \dots, k'_{n-1})$  such that $f(\beta_i(r))=h_i f(\alpha_i(r))$ for all $i=1, \ldots, n$ and $k'_i=h_{i+1} k_i {h_i}^{-1}$ for all $i=1, \ldots, n-1$.

But then by fixing $r$ and using that $\varphi\ltimes f$ is an essential equivalence we have a fibered product of topological spaces

\[\xymatrix{
 (G \ltimes X)_1 \ar[r]^{\e} \ar[d]_{(s,t)}& ( H \ltimes Y)_1 \ar[d]^{(s,t)} \\
( G \ltimes X)_0\times ( G \ltimes X)_0 \ar[r]^{\e \times \e} & ( H \ltimes Y)_0\times ( H \ltimes Y)_0}\]
and therefore for every $r \in I_i$ there is $g^r_i\in G$ such that $\phi(g^r_i)=h_i$. Since $G$ is discrete and the dependence on $r$ is continuous, the $n$-tuple $(g^r_1,\ldots,g^r_n)$ actually does not depend on $r$ and represents an element of $P( G \ltimes X)_1$.

\item $P(\varphi\ltimes f)$ is essentially surjective. We will show that \[s\pi_{1}: P( H \ltimes Y)_1\times^t_{P( H \ltimes Y)_0} P( G \ltimes X)_0\rightarrow P( H \ltimes Y)_0\] is an open surjection.

For \'etale groupoids 
the condition that the morphism $s\pi_{1}:  (H \ltimes Y)_1\times^t_{ (H \ltimes Y)_0} ( G \ltimes X)_0\rightarrow ( H \ltimes Y)_0$ is an open surjection implies that it has local sections. We will use these local sections to construct local sections of $s\pi_{1}: P( H \ltimes Y)_1\times^t_{P( H \ltimes Y)_0} P( G \ltimes X)_0\rightarrow P( H \ltimes Y)_0$.

Let $\{U_\alpha\}_{\alpha\in \Delta}$ be a cover of $Y$ and $s_\alpha : U_\alpha \rightarrow (H \ltimes Y)_1\times^t_{ (H \ltimes Y)_0} ( G \ltimes X)_0$ the local sections. Take  $\gamma \in P( H \ltimes Y)_0$ and suppose that $\gamma$ is represented by an element of  $\mmm( I_{S_n},H\ltimes Y)$ therefore $\gamma = (\gamma_1,  \dots, \gamma_n,k_1,\dots,k_{n-1})$  with $\gamma_i : I_i \rightarrow Y$.

Given the subdivision $\{0=r_0\le r_1\le\cdots\le r_n=1\}$ associated to $\gamma$ and with an open cover of $[0,1]$ given by connected intervals  $\{I_i \mid 1\leq i \leq n\}$, we want to construct a refinement of the subdivision
$$
\{0=r_0\le s_0^{1}\leq\ldots \leq s_{m_1}^1=r_1 \leq 
\cdots \leq r_{i-1}=s_0^{i}\leq\ldots \leq s_{m_i}^i=r_i \leq \cdots \le r_n=1\}
$$
along with connected intervals $I^i_j$ with the property that  $\gamma_i(I_j^i) \subseteq U_{\alpha_j^i}$ for some $\alpha_j^i$. To construct the subdivision, first for $[r_{i-1},r_i]$ with non-empty interior, we consider the covering $\{ I_i \cap U_{\alpha}\}_{\alpha\in \Delta}$, by compactness of the interval $[r_{i-1},r_i]$ we can find a partition $r_{i-1}=s_0^{i} < \ldots < s_{m_i}^i=r_i$ such that each $\gamma_i ( [ s_{j-1}^{i},s_j^{i}] )$ is contained in some $U_{\alpha^i_j}$. Let $I^i_j$ be an open connected neighborhood of $[ s_{j-1}^{i},s_j^{i}]$ small enough such that $I^i_j \cap \{s^i_0,s^i_1,\ldots,s^i_{m_i}\}=\{s^i_{j-1},s^i_j\}$.

For the repeated elements $r_{i-1}=r_i$ is a matter of just shrinking the interval $I_i$ to get $\gamma_i(I_i) \subseteq U_{\alpha_i}$ for some  $\alpha_i$ and to obtain an object of the category of subdivisions.

With the local sections $s_{\alpha^i_j}$ we obtain maps $\pi_2s_{\alpha^i_j}(\gamma_i(r)) : I^i_j \rightarrow X$ and functions $\pi_1\pi_1s_{\alpha^i_j}(\gamma_i(r)) : I^i_j \rightarrow H$, since the intervals are connected and $H$ is a discrete group, actually these functions are constant and we have elements $h_j^i \in H$ with $s \pi_1 (f(\pi_2s_{\alpha^i_j}(\gamma_i(r))),h_j^i)= \gamma_i(r)$, i.e.
$$
 f(\pi_2s_{\alpha^i_j}(\gamma_i(r)))= h_j^i \gamma_i(r)
$$
for $r \in I^i_j$.

Note that $\left (h^i_j \right )^{-1} f(\pi_2s_{\alpha^i_j}(\gamma_i(s^i_j)))) = \left ( h^i_{j+1}\right )^{-1} f(\pi_2s_{\alpha^i_{j+1}}(\gamma_i(s^i_j)))$ (both are $\gamma_i(s^i_j)$) and therefore 
$$
h^i_{j+1} \left ( h^i_j \right )^{-1} f(\pi_2s_{\alpha^i_j}(\gamma_i(s^i_j))) =  f(\pi_2s_{\alpha^i_{j+1}}(\gamma_i(s^i_j))).
$$
Since $f$ is full and faithful, there is a $g^i_j \in G$ with $\phi(g^i_j)=h^i_{j+1} \left ( h^i_j \right )^{-1}$, such that 

$$
g^i_j \pi_2s_{\alpha^i_j}(\gamma_i(s^i_j)) =  \pi_2s_{\alpha^i_{j+1}}(\gamma_i(s^i_j))
$$
Similarly at the intersection points of two consecutive paths we have $k_i\gamma_i(r) = \gamma_{i+1}(r)$ for all $r\in \tilde I^i_j$ and therefore
$$
k_i \left (h_{m_i}^i \right )^{-1} f(\pi_2s_{\alpha^i_{m_i}}(\gamma_i(r_i))= k_i \gamma_i(r_i) = \gamma_{i+1}(r_i) = \left ( h_{0}^{i+1} \right )^{-1} f(\pi_2s_{\alpha^{i+1}_{0}}(\gamma_{i+1}(r_i))
$$
so
$$
h_{0}^{i+1} k_i \left ( h_{m_i}^i \right )^{-1} f(\pi_2s_{\alpha^i_{m_i}}(\gamma_i(r_i))=  f(\pi_2s_{\alpha^{i+1}_{0}}(\gamma_{i+1}(r_i))
$$
and since we have that $f$ is full and faithful
then, we have elements $g^i \in G$ with $\phi(g^i)=h_{0}^{i+1} k_i \left ( h_{m_i}^i \right )^{-1}$, such that
 $$
 g^i \pi_2s_{\alpha^i_{m_i}}(\gamma_i(r_i))=  \pi_2s_{\alpha^{i+1}_{0}}(\gamma_{i+1}(r_i)).
$$
Therefore we have a
generalized path $\left( \left (\pi_2s_{\alpha^i_j}(\gamma_i(r)) \right )_{i,j},g^1_1,g^1_2,\ldots,g^1_{m_1},g^1,g^2_1,\ldots,g^n_{m_n} \right )$ and elements $(h^1_1,h^1_2,\ldots,h^1_{m_1-1},\ldots,h^n_{m_n-1})$ of $H$ . By construction,
$$
(h^1_1,h^1_2,\ldots,h^1_{m_1-1},\ldots,h^n_{m_n-1}) \left (  \left (\pi_2s_{\alpha^i_j}(\gamma_i(r)) \right)_{i,j},g^1_1,g^1_2,\ldots,g^1_{m_1},g^1,g^2_1,\ldots,g^n_{m_n} \right )
$$
is 
$$
(\left. \gamma_1 \right |_{I^1_1}, \left. \gamma_1 \right |_{I^1_2} \dots, \left.\gamma_1 \right |_{I^1_{m_1}},\ldots,\left. \gamma_n \right |_{I^n_{m_n}},id,id,\ldots,k_1,id,\ldots,k_n).
$$
In the colimit this represents the same element as $(\gamma_1,  \dots, \gamma_n,k_1,\dots,k_{n-1})$. Therefore we have constructed local sections on the set

$$
\left\{
(\gamma_1,  \dots, \gamma_n,k_1,  \dots, k_{n-1})\in \mmm( I_{S_n},H\ltimes Y ) | \gamma_i ( [ s_{j-1}^{i},s_j^{i}] )\subseteq U_{\alpha^i_j} 
\right\}$$

which is an open set in the compact open topology of $\mmm( I_{S_n},H\ltimes Y)$.
\end{enumerate}
\end{proof}

Thus, we have proved that the path groupoid functor sends essential equivalences to essential equivalences and therefore the path groupoid is invariant under  Morita equivalence.
\begin{thm}
If $G \ltimes X\sim_M H \ltimes Y$, then $P(G \ltimes X)\sim_M P(H \ltimes Y)$.
\end{thm}

\subsection{The free loop groupoid $L(G\ltimes X)$}
We use the model of the path groupoid $P''=G\ltimes X^I$ to define the loop groupoid as the following pullback along the diagonal:

\begin{equation} 
\xymatrix{
&G \ltimes X^I  \ar[d]^{\ev}\\ \Delta : G \ltimes X\ar[r]& (G\times G) \ltimes (X\times X)}
\label{eq:loop}
\end{equation}

\begin{defn} The {\em free loop groupoid} $L(G\ltimes X)$ of a translation groupoid $G\ltimes X$ is 
$$L(G\ltimes X)=(G\times G)\ltimes L_0$$
where
$$L_0=
\{ (\beta, h,l)\in X^I\times G\times G |\beta(0)=hl^{-1}\beta(1)\}$$
and the group $G\times G$ acts on $L_0$ by $(a,b)(\beta, h,l)=(a\beta, bha^{-1},bla^{-1})$. 
\end{defn}

The following diagram depicts an arrow $(a,b)\in G\times G$ from $(\beta, h,l)$ to $(a\beta, bha^{-1},bla^{-1})$.
\tikzset{->-/.style={decoration={
  markings,
  mark=at position .5 with {\arrow{>}}},postaction={decorate}}}
\definecolor{qqqqff}{rgb}{0.,0.,1.}
\definecolor{ffqqqq}{rgb}{1.,0.,0.}
\begin{center}
\begin{tikzpicture}[line cap=round,line join=round,>=stealth,x=1.0cm,y=1.0cm]
\clip(8.,-3.) rectangle (15.,4.);
\draw [line width=1.6pt,color=ffqqqq] (8.98,2.96)-- (9.02,3.02)-- (9.1,3.12)-- (9.18,3.2)-- (9.24,3.24)-- (9.3,3.28)-- (9.38,3.32)-- (9.46,3.36)-- (9.54,3.4)-- (9.68,3.42)-- (9.8,3.44)-- (9.96,3.44)-- (10.1,3.46)-- (10.24,3.46)-- (10.4,3.44)-- (10.54,3.42)-- (10.64,3.4)-- (10.78,3.36)-- (10.88,3.32)-- (10.98,3.26)-- (11.1,3.18)-- (11.22,3.14)-- (11.4,3.02)-- (11.52,2.98)-- (11.62,2.92)-- (11.76,2.86)-- (11.88,2.8)-- (12.,2.76)-- (12.12,2.74)-- (12.22,2.7)-- (12.34,2.7)-- (12.44,2.68)-- (12.52,2.66)-- (12.62,2.64)-- (12.76,2.62)-- (12.86,2.62)-- (12.98,2.62)-- (13.1,2.62)-- (13.18,2.62)-- (13.3,2.62)-- (13.38,2.64)-- (13.46,2.66)-- (13.52,2.7)-- (13.6,2.74)-- (13.66,2.8)-- (13.74,2.84)-- (13.82,2.9)-- (13.92,2.98)-- (14.,3.06)-- (14.04,3.14)-- (14.1,3.2)-- (14.16,3.28)-- (14.22,3.3)-- (14.26,3.36)-- (14.32,3.42)-- (14.36,3.48);
\draw [line width=1.6pt,color=qqqqff] (9.04,-1.02)-- (9.08,-0.96)-- (9.16,-0.86)-- (9.24,-0.78)-- (9.3,-0.74)-- (9.36,-0.7)-- (9.44,-0.66)-- (9.52,-0.62)-- (9.6,-0.58)-- (9.74,-0.56)-- (9.86,-0.54)-- (10.02,-0.54)-- (10.16,-0.52)-- (10.3,-0.52)-- (10.46,-0.54)-- (10.6,-0.56)-- (10.7,-0.58)-- (10.84,-0.62)-- (10.94,-0.66)-- (11.04,-0.72)-- (11.16,-0.8)-- (11.28,-0.84)-- (11.46,-0.96)-- (11.58,-1.)-- (11.68,-1.06)-- (11.82,-1.12)-- (11.94,-1.18)-- (12.06,-1.22)-- (12.18,-1.24)-- (12.28,-1.28)-- (12.4,-1.28)-- (12.5,-1.3)-- (12.58,-1.32)-- (12.68,-1.34)-- (12.82,-1.36)-- (12.92,-1.36)-- (13.04,-1.36)-- (13.16,-1.36)-- (13.24,-1.36)-- (13.36,-1.36)-- (13.44,-1.34)-- (13.52,-1.32)-- (13.58,-1.28)-- (13.66,-1.24)-- (13.72,-1.18)-- (13.8,-1.14)-- (13.88,-1.08)-- (13.98,-1.)-- (14.06,-0.92)-- (14.1,-0.84)-- (14.16,-0.78)-- (14.22,-0.7)-- (14.28,-0.68)-- (14.32,-0.62)-- (14.38,-0.56)-- (14.42,-0.5);
\draw [->-] (9.010769230769226,3.0061538461538437) -- (9.,-1.);
\draw [->-] (14.356923076923085,3.4753846153846353) -- (14.398461538461536,-0.5323076923076937);
\draw [->-] (12.96,1.16) -- (14.356923076923085,3.4753846153846357);
\draw [->-] (12.96,1.16) -- (10.38,1.2);
\draw [->-] (12.96,1.16) -- (9.010769230769226,3.0061538461538437);
\draw [->-] (10.38,1.2) -- (9.,-1.);
\draw [->-] (10.38,1.2) -- (14.398461538461536,-0.5323076923076935);
\draw (11.76,3.7) node[anchor=north west] {$\beta$};
\draw (11.6,-1.26) node[anchor=north west] {$a\beta$};
\draw (14.54,1.52) node[anchor=north west] {$a$};
\draw (8.4,1.5) node[anchor=north west] {$a$};
\draw (10.62,2.7) node[anchor=north west] {$h$};
\draw (13.6,2.34) node[anchor=north west] {$l$};
\draw (13.06,0.8) node[anchor=north west] {$alb^{-1}$};
\draw (9.9,0.7) node[anchor=north west] {$ahb^{-1}$};
\draw (11.06,1.8) node[anchor=north west] {$b$};
\begin{scriptsize}
\draw [fill=black] (9.010769230769226,3.0061538461538437) circle (1.5pt);
\draw [fill=black] (9.,-1.) circle (1.5pt);
\draw [fill=black] (14.356923076923085,3.4753846153846353) circle (1.5pt);
\draw [fill=black] (14.398461538461536,-0.5323076923076935) circle (1.5pt);
\draw [fill=black] (12.96,1.16) circle (1.5pt);
\draw [fill=black] (10.38,1.2) circle (1.5pt);
\end{scriptsize}
\end{tikzpicture}
\end{center}

We will show that this groupoid $(G\times G) \ltimes L_0$ is Morita equivalent to the translation groupoid $G\ltimes L$ where 

$$L=\{ (\alpha, g)\in X^I\times G |\alpha(0)=g\alpha(1)\}$$ and the action is given by $(\alpha, g)\sim (k\alpha, kgk^{-1})$. The following diagram depicts an arrow $(k, (\alpha, g))$ between $(\alpha, g)$ and $(k\alpha, kgk^{-1})$.

\begin{center}  
\begin{tikzpicture}[line cap=round,line join=round,>=stealth,x=1.0cm,y=1.0cm]
\clip(7.,-2.5) rectangle (16.,4.);
\draw [line width=1.6pt,color=ffqqqq] (8.98,2.96)-- (9.02,3.02)-- (9.1,3.12)-- (9.18,3.2)-- (9.24,3.24)-- (9.3,3.28)-- (9.38,3.32)-- (9.46,3.36)-- (9.54,3.4)-- (9.68,3.42)-- (9.8,3.44)-- (9.96,3.44)-- (10.1,3.46)-- (10.24,3.46)-- (10.4,3.44)-- (10.54,3.42)-- (10.64,3.4)-- (10.78,3.36)-- (10.88,3.32)-- (10.98,3.26)-- (11.1,3.18)-- (11.22,3.14)-- (11.4,3.02)-- (11.52,2.98)-- (11.62,2.92)-- (11.76,2.86)-- (11.88,2.8)-- (12.,2.76)-- (12.12,2.74)-- (12.22,2.7)-- (12.34,2.7)-- (12.44,2.68)-- (12.52,2.66)-- (12.62,2.64)-- (12.76,2.62)-- (12.86,2.62)-- (12.98,2.62)-- (13.1,2.62)-- (13.18,2.62)-- (13.3,2.62)-- (13.38,2.64)-- (13.46,2.66)-- (13.52,2.7)-- (13.6,2.74)-- (13.66,2.8)-- (13.74,2.84)-- (13.82,2.9)-- (13.92,2.98)-- (14.,3.06)-- (14.04,3.14)-- (14.1,3.2)-- (14.16,3.28)-- (14.22,3.3)-- (14.26,3.36)-- (14.32,3.42)-- (14.36,3.48);
\draw [line width=1.6pt,color=qqqqff] (9.04,-1.02)-- (9.08,-0.96)-- (9.16,-0.86)-- (9.24,-0.78)-- (9.3,-0.74)-- (9.36,-0.7)-- (9.44,-0.66)-- (9.52,-0.62)-- (9.6,-0.58)-- (9.74,-0.56)-- (9.86,-0.54)-- (10.02,-0.54)-- (10.16,-0.52)-- (10.3,-0.52)-- (10.46,-0.54)-- (10.6,-0.56)-- (10.7,-0.58)-- (10.84,-0.62)-- (10.94,-0.66)-- (11.04,-0.72)-- (11.16,-0.8)-- (11.28,-0.84)-- (11.46,-0.96)-- (11.58,-1.)-- (11.68,-1.06)-- (11.82,-1.12)-- (11.94,-1.18)-- (12.06,-1.22)-- (12.18,-1.24)-- (12.28,-1.28)-- (12.4,-1.28)-- (12.5,-1.3)-- (12.58,-1.32)-- (12.68,-1.34)-- (12.82,-1.36)-- (12.92,-1.36)-- (13.04,-1.36)-- (13.16,-1.36)-- (13.24,-1.36)-- (13.36,-1.36)-- (13.44,-1.34)-- (13.52,-1.32)-- (13.58,-1.28)-- (13.66,-1.24)-- (13.72,-1.18)-- (13.8,-1.14)-- (13.88,-1.08)-- (13.98,-1.)-- (14.06,-0.92)-- (14.1,-0.84)-- (14.16,-0.78)-- (14.22,-0.7)-- (14.28,-0.68)-- (14.32,-0.62)-- (14.38,-0.56)-- (14.42,-0.5);
 \draw[->-] (9.010769230769233,3.0061538461538544) -- (9.,-1.);
\draw [->-] (14.356923076923128,3.4753846153847006) -- (14.398461538461518,-0.5323076923077172);
\draw [->-] (9.,3.) -- (14.356923076923128,3.4753846153847006);
\draw (11.54,2.5) node[anchor=north west] {$\alpha$};
\draw (11.44,-1.2) node[anchor=north west] {$k\alpha$};
\draw (14.54,1.52) node[anchor=north west] {$k$};
\draw (8.4,1.5) node[anchor=north west] {$k$};
\draw (12.06,4) node[anchor=north west] {$g$};
\begin{scriptsize}
\draw [fill=black] (9.010769230769233,3.0061538461538544) circle (1.5pt);
\draw [fill=black] (9.,-1.) circle (1.5pt);
\draw [fill=black] (14.356923076923128,3.4753846153847006) circle (1.5pt);
\draw [fill=black] (14.398461538461518,-0.5323076923077174) circle (1.5pt);
\draw [fill=black] (9.,3.) circle (1.5pt);
\end{scriptsize}
\end{tikzpicture}
\end{center}  

\begin{prop} If $G\ltimes X$ is a topological groupoid, then the loop groupoid 
$L(G\ltimes X)$ is Morita equivalent to $G\ltimes L$, where $L$ and the action are defined above.
\end{prop}

\begin{proof}
We define an equivariant map $\phi\ltimes \e: (G\times G) \ltimes L_0 \to G\ltimes L$ by $\phi((a,b))=a$ and $\e((\beta, h, l))=(\beta, l^{-1}h)$. This map is an essential equivalence since  
the map
$$s\pi_1:  G\times L_0 \to L$$ given by $s\pi_1(k,(\beta, h,l))=(k^{-1}\beta, k^{-1}l^{-1}hk)$
is an open surjection and $G\times G\times L_0$ is given by the pullback of the following maps
\[\xymatrix{
&G\times L\ar[d]^{(s,t)} \\
L_0\times L_0 \ar[r]^{\e \times \e} & L\times L}\]
\end{proof}

\begin{remark}
We can use our description for the free loop groupoid in the special case of the point groupoid. We obtain that $L(G\ltimes \bullet)=(G\times G) \ltimes (G \times G)$ with the action $(a,b) \cdot (h,l)=(b h a^{-1},b l a^{-1})$. This groupoid is equivalent to $G$ acting on itself by conjugation by using the second characterization of the loop groupoid as $G\ltimes L$ with $L=\{ (\beta, g)\in X^I\times G |\beta(0)=g\beta(1)\}$. In this way, we recover a result of Lupercio and Uribe in \cite{Lupercio}. Observe that  $L(G\ltimes \bullet)=G\ltimes G$ whereas $P(G \ltimes \bullet )=G \ltimes \bullet$.

\end{remark}

\section{Based path and loop groupoids}
Now that we have defined the free path groupoid of a translation groupoid and have given several equivalent models, we can give an explicit characterization of the various groupoids resulting from fixing certain points. These  based groupoids of paths will be of great significance to the groupoid Lusternik-Schnirelmann theory defined in \cite{Colman2} as well as for the groupoid topological complexity defined in \cite{AC} and they will be further discussed elsewhere.

\subsection{The groupoid $\Omega_{x,y}$ of paths from $x$ to $y$}\label{based}
The groupoid of paths from $x$ to $y$,  $\Omega_{x,y}$, is defined as a pullback of the evaluation map  $\ev: P(G \ltimes X ) \to (G\times G) \ltimes (X\times X)$ and the constant map $x\times y: {\bf{1}}\to (G\times G) \ltimes (X\times X)$ where ${\bf{1}}$ is the  trivial groupoid with one object and one arrow,  ${\bf{1}}=e\ltimes \bullet$ and $(x\times y)(\bullet)=(x,y)$. That is,
$$\xymatrix{\Omega_{x,y}\ar[d]\ar[r]
&P(G \ltimes X ) \ar[d]^{\ev}\\ 
{\bf{1}}\ar[r]^{x\times y\;\;\;\;\;\;\;\;\;\;\;}& (G\times G) \ltimes (X\times X)}.
$$
Note that by the definition of groupoid pullback,  we have that if we take the model of the path groupoid of generalized paths, $P=\colim \phi \ltimes \colim \psi$, then the object space of the pullback is:
$$\{((\alpha_1, \cdots, \alpha_n, k_1, \cdots, k_{n-1}), h, l )\in \colim \psi \times(G\times G) |\alpha_1(0)=hx \mbox{ and } \alpha_n(1)=ly\}$$
i.e. the objects of $\Omega_{x,y}$ are sequences of paths and arrows $(h, \alpha_1, k_1,  \ldots, k_n, \alpha_n,  l )$ where
$s(k_i)=\alpha_{i+1}(r_i)$ for $i=1, \ldots, n-1$, 
$t(k_i)=\alpha_{i}(r_i)$ for $i=0, \ldots, n$, 
$s(h)=x$ and $s(l)=y$;
which are precisely the Haefliger $G$-paths \cite{Hae} when restricted to the closed intervals in the subdivision. Note that the sequences in Haefliger paths start and end with {\em arrows} and not with {\em paths} like our generalized paths in the free path groupoid defined in section \ref{path}. We recover the original sequence in the Haefliger $G$-paths when we fix the endpoints $x$ and $y$ in our free generalized paths.

For an equivalent characterization of the groupoid of paths from $x$ to $y$, we can consider our simplest model for the path groupoid $P''=G\ltimes X^I$. In this case, we describe the space of objects as
 $(\Omega_{x,y})_0=\{ (\beta, h,l)\in X^I\times (G\times G) |\beta(0)=hx \mbox{ and }\beta(1)=ly\}$.

These are paths that start at any point in the orbit of $x$ and end at any point in the orbit of $y$. The space of arrows is the cartesian product $G \times (\Omega_{x,y})_0$ where the action is given by $g(\beta, h, l)=(g\beta, gh, gl)$. 
 \begin{center} 
\begin{tikzpicture}[line cap=round,line join=round,>=stealth,x=1.0cm,y=1.0cm]
\clip(6.,-3.) rectangle (18.,5.);
\draw [line width=1.6pt,color=ffqqqq] (8.98,2.96)-- (9.02,3.02)-- (9.1,3.12)-- (9.18,3.2)-- (9.24,3.24)-- (9.3,3.28)-- (9.38,3.32)-- (9.46,3.36)-- (9.54,3.4)-- (9.68,3.42)-- (9.8,3.44)-- (9.96,3.44)-- (10.1,3.46)-- (10.24,3.46)-- (10.4,3.44)-- (10.54,3.42)-- (10.64,3.4)-- (10.78,3.36)-- (10.88,3.32)-- (10.98,3.26)-- (11.1,3.18)-- (11.22,3.14)-- (11.4,3.02)-- (11.52,2.98)-- (11.62,2.92)-- (11.76,2.86)-- (11.88,2.8)-- (12.,2.76)-- (12.12,2.74)-- (12.22,2.7)-- (12.34,2.7)-- (12.44,2.68)-- (12.52,2.66)-- (12.62,2.64)-- (12.76,2.62)-- (12.86,2.62)-- (12.98,2.62)-- (13.1,2.62)-- (13.18,2.62)-- (13.3,2.62)-- (13.38,2.64)-- (13.46,2.66)-- (13.52,2.7)-- (13.6,2.74)-- (13.66,2.8)-- (13.74,2.84)-- (13.82,2.9)-- (13.92,2.98)-- (14.,3.06)-- (14.04,3.14)-- (14.1,3.2)-- (14.16,3.28)-- (14.22,3.3)-- (14.26,3.36)-- (14.32,3.42)-- (14.36,3.48);
\draw [line width=1.6pt,color=qqqqff] (9.04,-1.02)-- (9.08,-0.96)-- (9.16,-0.86)-- (9.24,-0.78)-- (9.3,-0.74)-- (9.36,-0.7)-- (9.44,-0.66)-- (9.52,-0.62)-- (9.6,-0.58)-- (9.74,-0.56)-- (9.86,-0.54)-- (10.02,-0.54)-- (10.16,-0.52)-- (10.3,-0.52)-- (10.46,-0.54)-- (10.6,-0.56)-- (10.7,-0.58)-- (10.84,-0.62)-- (10.94,-0.66)-- (11.04,-0.72)-- (11.16,-0.8)-- (11.28,-0.84)-- (11.46,-0.96)-- (11.58,-1.)-- (11.68,-1.06)-- (11.82,-1.12)-- (11.94,-1.18)-- (12.06,-1.22)-- (12.18,-1.24)-- (12.28,-1.28)-- (12.4,-1.28)-- (12.5,-1.3)-- (12.58,-1.32)-- (12.68,-1.34)-- (12.82,-1.36)-- (12.92,-1.36)-- (13.04,-1.36)-- (13.16,-1.36)-- (13.24,-1.36)-- (13.36,-1.36)-- (13.44,-1.34)-- (13.52,-1.32)-- (13.58,-1.28)-- (13.66,-1.24)-- (13.72,-1.18)-- (13.8,-1.14)-- (13.88,-1.08)-- (13.98,-1.)-- (14.06,-0.92)-- (14.1,-0.84)-- (14.16,-0.78)-- (14.22,-0.7)-- (14.28,-0.68)-- (14.32,-0.62)-- (14.38,-0.56)-- (14.42,-0.5);
\draw [->-] (9.010769230769236,3.0061538461538597) -- (9.,-1.);
\draw [->-] (14.35692307692315,3.4753846153847334) -- (14.39846153846151,-0.5323076923077306);
\draw (11.54,2.8) node[anchor=north west] {$\beta$};
\draw (11.44,-1.2) node[anchor=north west] {$g\beta$};
\draw (14.54,1.52) node[anchor=north west] {$g$};
\draw (9.26,1.54) node[anchor=north west] {$g$};
\draw (15.78,3.64) node[anchor=north west] {$l$};
\draw [->-] (7.,1.) -- (9.010769230769236,3.0061538461538597);
\draw [->-] (7.,1.) -- (9.,-1.);
\draw [->-] (17.,2.) -- (14.35692307692315,3.4753846153847334);
\draw [->-] (17.,2.) -- (14.39846153846151,-0.5323076923077306);
\draw (16.14,-0.02) node[anchor=north west] {$gl$};
\draw (7.3,2.76) node[anchor=north west] {$h$};
\draw (7.38,-0.26) node[anchor=north west] {$gh$};
\begin{scriptsize}
\draw [fill=black] (9.010769230769236,3.0061538461538597) circle (1pt);
\draw [fill=black] (9.,-1.) circle (1pt);
\draw [fill=black] (14.35692307692315,3.4753846153847334) circle (1pt);
\draw [fill=black] (14.39846153846151,-0.5323076923077306) circle (1pt);
\draw [fill=black] (9.,3.) circle (1.pt);
\draw [fill=black] (7.,1.) circle (1pt);
\draw [fill=black] (17.,2.) circle (1pt);
\end{scriptsize}
\end{tikzpicture}
\end{center}

Since $(\beta, h,l)\sim (g\beta, gh, gl)$ for all $g\in G$, we can consider $g=h^{-1}$ and we have that all classes $[(\beta, h,l)]$ have a representative of the form $(\alpha, k)$ with $\alpha= h^{-1}\beta$ and $k=h^{-1}l$.
Then we can consider  the space of objects  $P_{x,y}=\{ (\alpha, k)\in X^I\times G |\alpha(0)=x \mbox{ and }\alpha(1)=ky\}$. Observe that 
$$(\alpha, k)\sim (h^{-1}\beta, e, h^{-1}l)\sim (gh^{-1}\beta,ge, gh^{-1}l)=  (g\alpha, g, gk)\sim (e\alpha, e, k)\sim (\alpha, k)$$
then the action is trivial on the space of objects $P_{x,y}$.

Therefore the {\em groupoid of paths between $x$ and $y$} is the translation groupoid $\Omega_{x,y}=G\ltimes (\Omega_{x,y})_0$ which is equivalent to the topological space $P_{x,y}$.

\subsection{The groupoid $\Omega_{x}$ of based loops}
Similarly, we define the based loop groupoid as the groupoid pullback,
$$\xymatrix{\Omega_{x}\ar[d]\ar[r]
&P(G \ltimes X ) \ar[d]^{\ev}\\ 
{\bf{1}} \ar[r]^{x\times x \;\;\; \;\;\; \;\;\; \;\;\; \;\;\;}& G\times G \ltimes (X\times X)}
$$
where $x\times x$ is the constant map with $(x\times x)(\bullet)=(x,x)$.

That is, the {\em based loop groupoid} is the translation groupoid $\Omega_{x}=G\ltimes (\Omega_{x})_0$ where the object space is  
$$(\Omega_{x})_0=\{ (\beta, h,l)\in X^I\times (G\times G) |\beta(0)=hx \mbox{ and }\beta(1)=lx\}$$ 
i.e. the space of paths that begin and end at (possibly different) points in the orbit of $x$. 
The action is given by $g (\beta, h,l)= (g\beta,gh,gl)$.
\begin{center} 
\begin{tikzpicture}[line cap=round,line join=round,>=stealth,x=1.0cm,y=1.0cm]
\clip(8.,-3.) rectangle (16.,4.);
\draw [line width=1.6pt,color=ffqqqq] (8.98,2.96)-- (9.02,3.02)-- (9.1,3.12)-- (9.18,3.2)-- (9.24,3.24)-- (9.3,3.28)-- (9.38,3.32)-- (9.46,3.36)-- (9.54,3.4)-- (9.68,3.42)-- (9.8,3.44)-- (9.96,3.44)-- (10.1,3.46)-- (10.24,3.46)-- (10.4,3.44)-- (10.54,3.42)-- (10.64,3.4)-- (10.78,3.36)-- (10.88,3.32)-- (10.98,3.26)-- (11.1,3.18)-- (11.22,3.14)-- (11.4,3.02)-- (11.52,2.98)-- (11.62,2.92)-- (11.76,2.86)-- (11.88,2.8)-- (12.,2.76)-- (12.12,2.74)-- (12.22,2.7)-- (12.34,2.7)-- (12.44,2.68)-- (12.52,2.66)-- (12.62,2.64)-- (12.76,2.62)-- (12.86,2.62)-- (12.98,2.62)-- (13.1,2.62)-- (13.18,2.62)-- (13.3,2.62)-- (13.38,2.64)-- (13.46,2.66)-- (13.52,2.7)-- (13.6,2.74)-- (13.66,2.8)-- (13.74,2.84)-- (13.82,2.9)-- (13.92,2.98)-- (14.,3.06)-- (14.04,3.14)-- (14.1,3.2)-- (14.16,3.28)-- (14.22,3.3)-- (14.26,3.36)-- (14.32,3.42)-- (14.36,3.48);
\draw [line width=1.6pt,color=qqqqff] (9.04,-1.02)-- (9.08,-0.96)-- (9.16,-0.86)-- (9.24,-0.78)-- (9.3,-0.74)-- (9.36,-0.7)-- (9.44,-0.66)-- (9.52,-0.62)-- (9.6,-0.58)-- (9.74,-0.56)-- (9.86,-0.54)-- (10.02,-0.54)-- (10.16,-0.52)-- (10.3,-0.52)-- (10.46,-0.54)-- (10.6,-0.56)-- (10.7,-0.58)-- (10.84,-0.62)-- (10.94,-0.66)-- (11.04,-0.72)-- (11.16,-0.8)-- (11.28,-0.84)-- (11.46,-0.96)-- (11.58,-1.)-- (11.68,-1.06)-- (11.82,-1.12)-- (11.94,-1.18)-- (12.06,-1.22)-- (12.18,-1.24)-- (12.28,-1.28)-- (12.4,-1.28)-- (12.5,-1.3)-- (12.58,-1.32)-- (12.68,-1.34)-- (12.82,-1.36)-- (12.92,-1.36)-- (13.04,-1.36)-- (13.16,-1.36)-- (13.24,-1.36)-- (13.36,-1.36)-- (13.44,-1.34)-- (13.52,-1.32)-- (13.58,-1.28)-- (13.66,-1.24)-- (13.72,-1.18)-- (13.8,-1.14)-- (13.88,-1.08)-- (13.98,-1.)-- (14.06,-0.92)-- (14.1,-0.84)-- (14.16,-0.78)-- (14.22,-0.7)-- (14.28,-0.68)-- (14.32,-0.62)-- (14.38,-0.56)-- (14.42,-0.5);
\draw [->-] (9.010769230769238,3.0061538461538624) -- (9.,-1.);
\draw [->-] (14.356923076923161,3.47538461538475) -- (14.398461538461506,-0.5323076923077372);
\draw (11.98,3.7) node[anchor=north west] {$\beta$};
\draw (11.44,-1.2) node[anchor=north west] {$g\beta$};
\draw (14.58,1.52) node[anchor=north west] {$g$};
\draw (8.42,1.5) node[anchor=north west] {$g$};
\draw (13.64,2.08) node[anchor=north west] {$l$};
\draw [->-] (13.,1.) -- (9.010769230769238,3.0061538461538624);
\draw (10.78,2.) node[anchor=north west] {$h$};
\draw [->-] (13.,1.) -- (14.356923076923161,3.47538461538475);
\draw (13.06,0.92) node[anchor=north west] {$x$};
\begin{scriptsize}
\draw [fill=black] (9.010769230769238,3.0061538461538624) circle (1pt);
\draw [fill=black] (9.,-1.) circle (1pt);
\draw [fill=black] (14.356923076923161,3.47538461538475) circle (1pt);
\draw [fill=black] (14.398461538461506,-0.5323076923077373) circle (1pt);
\draw [fill=black] (9.,3.) circle (1pt);
\draw [fill=black] (13.,1.) circle (1pt);
\end{scriptsize}
\end{tikzpicture}
\end{center}  
Again, the groupoid $\Omega_{x}$ is equivalent to the topological space $P_{x,x}=\{(\alpha, k)| \alpha(0)=x \mbox{ and }\alpha(1)=kx\}$.

Alternatively, the based loop groupoid $\Omega_{x}$ can be obtained as the following groupoid pullback:
$$\xymatrix{\Omega_{x}\ar[d]\ar[r]&L(G \ltimes X ) \ar[d]^{\ev_0}\\ 
{\bf{1}} \ar[r]^{x\;\;\; \;\;\; \;\;\; }&  G \ltimes  X}
$$
where $L(G \ltimes X )$ is the free loop groupoid.

\subsection{The groupoid $P_{x}$ of paths from $x$}
We define the $x$-based path groupoid as the following groupoid pullback:
$$\xymatrix{P_{x} \ar[d]\ar[r]&P(G \ltimes X ) \ar[d]^{\ev}\\ 
{{\bf{1}}\times (G\ltimes X)} \ar[r]^{(x,\id) \;\;\;\;\;\;}& (G\times G) \ltimes (X\times X)}
$$
where  $(x,\id) : {\bf{1}}\times (G\ltimes X)\to  (G\times G) \ltimes (X\times X)$ is given by $(x,\id)(\bullet,z)=(x,z).$
Then the object space of the pullback $P_{x}$ is $(P_{x})_0=\{ (\beta, (h, l),(\bullet,z) )\in X^I\times G\times G \times {\bf{1}}\times  X|\;\beta(0)=hx {\mbox{ and }} \beta(1)=lz\}=
\{ (\beta, (h, l),z )|\;\beta(0)=hx {\mbox{ and }} \beta(1)=lz\}.$
The group $G\times G$ acts on $(P_{x})_0$ by $(g,k) (\beta, (h, l), z)=(g\beta, (gh, glk^{-1}), kz)$.
\begin{center} 
\begin{tikzpicture}[line cap=round,line join=round,>=stealth,x=1.0cm,y=1.0cm]
\clip(6.,-3.) rectangle (18.,4.);
\draw [line width=1.6pt,color=ffqqqq] (8.98,2.96)-- (9.02,3.02)-- (9.1,3.12)-- (9.18,3.2)-- (9.24,3.24)-- (9.3,3.28)-- (9.38,3.32)-- (9.46,3.36)-- (9.54,3.4)-- (9.68,3.42)-- (9.8,3.44)-- (9.96,3.44)-- (10.1,3.46)-- (10.24,3.46)-- (10.4,3.44)-- (10.54,3.42)-- (10.64,3.4)-- (10.78,3.36)-- (10.88,3.32)-- (10.98,3.26)-- (11.1,3.18)-- (11.22,3.14)-- (11.4,3.02)-- (11.52,2.98)-- (11.62,2.92)-- (11.76,2.86)-- (11.88,2.8)-- (12.,2.76)-- (12.12,2.74)-- (12.22,2.7)-- (12.34,2.7)-- (12.44,2.68)-- (12.52,2.66)-- (12.62,2.64)-- (12.76,2.62)-- (12.86,2.62)-- (12.98,2.62)-- (13.1,2.62)-- (13.18,2.62)-- (13.3,2.62)-- (13.38,2.64)-- (13.46,2.66)-- (13.52,2.7)-- (13.6,2.74)-- (13.66,2.8)-- (13.74,2.84)-- (13.82,2.9)-- (13.92,2.98)-- (14.,3.06)-- (14.04,3.14)-- (14.1,3.2)-- (14.16,3.28)-- (14.22,3.3)-- (14.26,3.36)-- (14.32,3.42)-- (14.36,3.48);
\draw [line width=1.6pt,color=qqqqff] (9.04,-1.02)-- (9.08,-0.96)-- (9.16,-0.86)-- (9.24,-0.78)-- (9.3,-0.74)-- (9.36,-0.7)-- (9.44,-0.66)-- (9.52,-0.62)-- (9.6,-0.58)-- (9.74,-0.56)-- (9.86,-0.54)-- (10.02,-0.54)-- (10.16,-0.52)-- (10.3,-0.52)-- (10.46,-0.54)-- (10.6,-0.56)-- (10.7,-0.58)-- (10.84,-0.62)-- (10.94,-0.66)-- (11.04,-0.72)-- (11.16,-0.8)-- (11.28,-0.84)-- (11.46,-0.96)-- (11.58,-1.)-- (11.68,-1.06)-- (11.82,-1.12)-- (11.94,-1.18)-- (12.06,-1.22)-- (12.18,-1.24)-- (12.28,-1.28)-- (12.4,-1.28)-- (12.5,-1.3)-- (12.58,-1.32)-- (12.68,-1.34)-- (12.82,-1.36)-- (12.92,-1.36)-- (13.04,-1.36)-- (13.16,-1.36)-- (13.24,-1.36)-- (13.36,-1.36)-- (13.44,-1.34)-- (13.52,-1.32)-- (13.58,-1.28)-- (13.66,-1.24)-- (13.72,-1.18)-- (13.8,-1.14)-- (13.88,-1.08)-- (13.98,-1.)-- (14.06,-0.92)-- (14.1,-0.84)-- (14.16,-0.78)-- (14.22,-0.7)-- (14.28,-0.68)-- (14.32,-0.62)-- (14.38,-0.56)-- (14.42,-0.5);
\draw [->-] (9.010769230769213,3.006153846153825) -- (9.,-1.);
\draw (11.98,3.7) node[anchor=north west] {$\beta$};
\draw (11.44,-1.2) node[anchor=north west] {$g\beta$};
\draw (14.58,1.56) node[anchor=north west] {$g$};
\draw (9.2,1.6) node[anchor=north west] {$g$};
\draw (15.32,2.86) node[anchor=north west] {$l$};
\draw [->-] (7.,1.) -- (9.010769230769213,3.006153846153825);
\draw (7.42,2.8) node[anchor=north west] {$h$};
\draw (6.4,1.48) node[anchor=north west] {$x$};
\draw [->-] (7.,1.) -- (9.,-1.);
\draw [->-] (16.,1.) -- (14.347692307692348,3.4615384615385323);
\draw [->-] (16.,1.) -- (17.,0.);
\draw [->-] (17.,0.) -- (14.398461538461488,-0.5323076923077571);
\draw [->-] (14.347692307692348,3.4615384615385323) -- (14.398461538461488,-0.5323076923077572);
\draw (16.62,1.28) node[anchor=north west] {$k$};
\draw (15.56,-0.28) node[anchor=north west] {$glk^{-1}$};
\draw (7.3,-0.04) node[anchor=north west] {$gh$};
\begin{scriptsize}
\draw [fill=black] (9.010769230769213,3.006153846153825) circle (1pt);
\draw [fill=black] (9.,-1.) circle (1pt);
\draw [fill=black] (14.398461538461488,-0.5323076923077571) circle (1pt);
\draw [fill=black] (9.,3.) circle (1pt);
\draw [fill=black] (7.,1.) circle (1pt);
\draw [fill=black] (16.,1.) circle (1pt);
\draw [fill=black] (14.347692307692348,3.4615384615385323) circle (1pt);
\draw [fill=black] (17.,0.) circle (1pt);
\end{scriptsize}
\end{tikzpicture}
\end{center}  
The {\em $x$-based path groupoid} is the translation groupoid $P_{x}=(G\times G)\ltimes (P_{x})_0$.

Observing that the equivalence class of each $(\beta, (h, l),z )\in (P_{x})_0$  contains an element of the form $(\alpha, g,w )\in X^I\times G \times X$ we have that the based path groupoid $P_{x}$ is equivalent to $G\ltimes P$ where 
$P=\{ (\alpha, g,w )|\;\alpha(0)=x {\mbox{ and }} \alpha(1)=gw\}$
and the action is given by $k (\alpha, g,w )= (\alpha, gk^{-1},kw )$. The following diagram depicts an arrow $(k ,(\alpha, g,w ))\in G\times P$ between $(\alpha, g,w )$ and $(\alpha, gk^{-1},kw )$.
\begin{center} 
\begin{tikzpicture}[line cap=round,line join=round,>=stealth,x=1.0cm,y=1.0cm]
\clip(8.,-2.) rectangle (18.,4.);
\draw [line width=1.6pt,color=ffqqqq] (8.98,2.96)-- (9.02,3.02)-- (9.1,3.12)-- (9.18,3.2)-- (9.24,3.24)-- (9.3,3.28)-- (9.38,3.32)-- (9.46,3.36)-- (9.54,3.4)-- (9.68,3.42)-- (9.8,3.44)-- (9.96,3.44)-- (10.1,3.46)-- (10.24,3.46)-- (10.4,3.44)-- (10.54,3.42)-- (10.64,3.4)-- (10.78,3.36)-- (10.88,3.32)-- (10.98,3.26)-- (11.1,3.18)-- (11.22,3.14)-- (11.4,3.02)-- (11.52,2.98)-- (11.62,2.92)-- (11.76,2.86)-- (11.88,2.8)-- (12.,2.76)-- (12.12,2.74)-- (12.22,2.7)-- (12.34,2.7)-- (12.44,2.68)-- (12.52,2.66)-- (12.62,2.64)-- (12.76,2.62)-- (12.86,2.62)-- (12.98,2.62)-- (13.1,2.62)-- (13.18,2.62)-- (13.3,2.62)-- (13.38,2.64)-- (13.46,2.66)-- (13.52,2.7)-- (13.6,2.74)-- (13.66,2.8)-- (13.74,2.84)-- (13.82,2.9)-- (13.92,2.98)-- (14.,3.06)-- (14.04,3.14)-- (14.1,3.2)-- (14.16,3.28)-- (14.22,3.3)-- (14.26,3.36)-- (14.32,3.42)-- (14.36,3.48);
\draw (11.98,3.4) node[anchor=north west] {$\alpha$};
\draw (13.5,0.) node[anchor=north west] {$kw$};
\draw (12.78,2.14) node[anchor=north west] {$gk^{-1}$};
\draw [->-] (16.,1.) -- (14.347692307692348,3.4615384615385323);
\draw [->-] (16.,1.) -- (14.,0.);
\draw (16.32,1.46) node[anchor=north west] {$w$};
\draw (15.2,0.46) node[anchor=north west] {$k$};
\draw (15.46,2.8) node[anchor=north west] {$g$};
\draw [->-] (14.,0.) -- (14.347692307692348,3.4615384615385323);
\begin{scriptsize}
\draw [fill=black] (9.010769230769213,3.006153846153825) circle (1pt);
\draw [fill=black] (9.,3.) circle (1pt);
\draw [fill=black] (16.,1.) circle (1pt);
\draw [fill=black] (14.347692307692348,3.4615384615385323) circle (1pt);
\draw [fill=black] (14.,0.) circle (1pt);
\end{scriptsize}
\end{tikzpicture}
\end{center}  
The $x$-based path groupoid $P_x$ is not in general equivalent to a topological space.

Given points $x,y\in X$,  our various path groupoids are related as follows:

$$\xymatrix{
& \Omega_{x,y}\ar[rr]^-{}\ar[d]_{}&&{\bf{1_y}} \ar[d]^-{}\\ 
\Omega_x \ar[r] & P_{x}\ar[r]^-{}\ar[d]_{}&G\ltimes X^I\ar[d]^{\ev_0}\ar[r]^-{\ev_1}&G\ltimes X\\ 
&{\bf{1_x}} \ar[r]^-{}&G\ltimes X&& }$$
where ${\bf{1_x}}=e\ltimes x$ and ${\bf{1_y}}=e\ltimes y$ and all diagrams are commutative up to a natural transformation.


\subsection{Examples} We will illustrate in this section the concepts described in the previous sections by calculating various path groupoids in some particular cases.

\subsubsection{Topological spaces} The free path groupoid of the topological space $X$ is $P(e\ltimes X)= e\ltimes X^I=X^I$ and the free loop groupoid is $L(e\ltimes X)= e\ltimes L$ where $L=\{\alpha\in X^I | \: \alpha(0)=\alpha(1)\}$. In this way we recover the classical free path and loop spaces of a topological space. Likewise, the based path and loop groupoids also coincide with the classical ones for topological spaces.

\subsubsection{Groups} For a point groupoid $G\ltimes \bullet$ we have shown before that the path groupoid is  itself and the loop groupoid is $(G\times G) \ltimes (G \times G)$ with the action $(a,b) \cdot (h,l)=(b h a^{-1},b l a^{-1})$ which is equivalent to $G$ acting on itself by conjugation, that is, $L(G\ltimes \bullet)=G\ltimes G$ and $P(G \ltimes \bullet )=G \ltimes \bullet$. The based loop groupoid is the unit groupoid $G$, as a discrete topological space. The based path groupoid of paths emanating from $\bullet$ is $G\ltimes G$.

\subsubsection{Free actions} If $G$ acts freely on a topological space $X$, we observe that the groupoid $G\ltimes X$ and the topological space $X/G$ are Morita equivalent. Then, we have that $P(G\ltimes X)= P(e\ltimes X/G)=e\ltimes (X/G)^I=(X/G)^I$ and the free loop groupoid is $L(G\ltimes X)= L(X/G)$ where $L(X/G)$ is the free loop space of the topological space $X/G$. In the same way, we have that the based groupoids coincide with the ones of the topological space $X/G$.

\subsubsection{Orbifolds} We proved that for developable orbifolds $G\ltimes X$, the free path groupoid is $P(G\ltimes X)= G\ltimes X^I$ and the free  loop groupoid is $L(G\ltimes X)= G\ltimes L$ where $L=\{(\alpha,g) \in X^I \times G | \: \alpha(0)=g\alpha(1)\}$. Also, the groupoid of paths between $x$ and $y$ is the topological space $P_{x,y}=\{ (\alpha, k)\in X^I\times G |\alpha(0)=x \mbox{ and }\alpha(1)=ky\}$, the groupoid of based loops is the topological space $P_{x,x}=\{(\alpha, k)| \alpha(0)=x \mbox{ and }\alpha(1)=kx\}$
and the groupoid of based paths from $x$ is the translation groupoid $P_{x}=(G\times G)\ltimes (P_{x})_0$.


\section{Homotopy}
We will define in this section a notion of homotopy based on the explicit description of the path groupoid $P(G\ltimes X)$ given in the previous section. This will provide a concrete alternative to the more abstract presentation given by Noohi in \cite{Noohi1, Noohi2} for stacks.
\subsection{Natural transformations for translation groupoids}
The equivariant maps $\varphi\ltimes f: K\ltimes Z \to G\ltimes X$ and $\psi\ltimes g: K\ltimes Z \to G\ltimes X$ are equivalent by a natural transformation if there exists a $K$-map $\gamma : Z \to G$ such that $\gamma(z)f(z)=g(z)$ for all $z\in Z$ where both $Z$ and $G$ are $K$-spaces considering the following action of $K$ on $G$:
$$K\times G \to G \mbox{ where } (k,g) \mapsto \psi(k) g \varphi(k)^{-1}.$$

Therefore  $\varphi\ltimes f\sim  \psi\ltimes g$ if there exists $\gamma : Z \to G$ such that
\begin{enumerate}
\item  $\gamma(z)f(z)=g(z)$ for all $z\in Z$ and 
\item $\gamma(kz)=\psi(k)\gamma(z)\varphi(k)^{-1}$ for all $k\in K$.
\end{enumerate}

If $Z$ is connected, then $\gamma$ is a constant map since $G$ is discrete. Then $\varphi\ltimes f\sim  \psi\ltimes g$ if there exists $h\in G$ such that $h f(z)=g(z)$ for all $z\in Z$ and $h=\psi(k) h\varphi(k)^{-1}$ for all $k\in K$. Then $g=h f$ and $\psi = h^{-1} \varphi h$. In other words, $\psi(k)$ is conjugated to $\varphi(k)$ for all $k\in K$.

If $G$ is abelian, then $\varphi\ltimes f\sim  \psi\ltimes g$ if $g=hf$ for some $h\in G$ and  $\varphi=\psi$. 

If $X=Z=\bullet$, then $\varphi\ltimes \bullet\sim \psi\ltimes \bullet$ iff $\varphi$ and $\psi$ are conjugate, $\varphi=h^{-1}\psi h$. In particular, when the group acting is abelian we have that two maps between point groupoids are equivalent by a natural transformation only if they are equal.

We give now a characterization of $2$-isomorphism for strict maps. Namely if two strict maps are $2$-isomorphic then when composed with an essential equivalence they are equivalent by a natural transformation, and if two strict maps are equivalent by a natural transformation then they are $2$-isomorphic as generalized maps.

\begin{prop}\label{naturaltransformationimpliesiso}
If $ f$ and $g$ are equivalent by a natural transformation, then  $ f\Rightarrow  g$ as generalized equivariant maps.
\end{prop}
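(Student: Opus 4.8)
\emph{Proof proposal.} The plan is to reduce everything to the trivial common refinement, so that the only real content is unwinding the definition of a $2$-morphism in $\MTrG$. Write $\varphi\ltimes f,\ \psi\ltimes g\colon K\ltimes Z\to G\ltimes X$ for the two equivariant maps in question. In $\MTrG$ the strict map $\varphi\ltimes f$ is represented by the equivariant generalized map
\[
K\ltimes Z\overset{\id}{\gets}K\ltimes Z\overset{\varphi\ltimes f}{\to}G\ltimes X ,
\]
and $\psi\ltimes g$ is represented by $K\ltimes Z\overset{\id}{\gets}K\ltimes Z\overset{\psi\ltimes g}{\to}G\ltimes X$. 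To produce a $2$-morphism between them I must exhibit a translation groupoid $R\ltimes U$, equivariant essential equivalences $u,v\colon R\ltimes U\to K\ltimes Z$ into the two apices, and natural transformations making the two triangles of the defining diagram commute.

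First I would take $R\ltimes U:=K\ltimes Z$ and $u=v:=\id_{K\ltimes Z}$; this is an equivariant essential equivalence since both defining conditions (essential surjectivity and full faithfulness) hold tautologically, the relevant pullback squares being identities. The left-hand triangle then asks for $\id\circ u\simT\id\circ v$, i.e.\ $\id_{K\ltimes Z}\simT\id_{K\ltimes Z}$, which is witnessed by the identity natural transformation $z\mapsto u(z)$. The right-hand triangle asks for $(\varphi\ltimes f)\circ u\simT(\psi\ltimes g)\circ v$, that is, $\varphi\ltimes f\simT\psi\ltimes g$, and this is exactly the hypothesis: the $K$-map $\gamma\colon Z\to G$ with $\gamma(z)f(z)=g(z)$ and $\gamma(kz)=\psi(k)\gamma(z)\varphi(k)^{-1}$ is precisely the datum of a natural transformation $\varphi\ltimes f\Rightarrow\psi\ltimes g$ in $\TrG$ (its naturality square over an arrow $h$ of $K\ltimes Z$ is the displayed identity, derived from the two properties of $\gamma$), and since inversion in $G$ is continuous we also obtain $\varphi\ltimes f\simT\psi\ltimes g$. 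Plugging this transformation into the diagram completes it, and the resulting diagram is by definition a $2$-morphism $\varphi\ltimes f\Rightarrow\psi\ltimes g$ in $\MTrG$.

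I do not expect a genuine obstacle here: the substance of the statement is purely the bookkeeping that the canonical inclusion $\TrG\to\MTrG$ carries a natural transformation to a $2$-isomorphism, which is formal once the definitions are unwound. The step deserving the most care is checking that $\gamma$ genuinely satisfies the naturality condition of a natural transformation in $\TrG$, i.e.\ re-deriving the identity $(\psi\ltimes g)(h)\,T(x)=T(y)\,(\varphi\ltimes f)(h)$ in the translation-groupoid setting from $\gamma(z)f(z)=g(z)$ together with the equivariance of $\gamma$; secondarily, one should confirm that the direction conventions for $2$-morphisms in the bicalculus of fractions are respected, which is automatic because every natural transformation in sight is invertible ($\simT$).
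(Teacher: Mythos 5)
Your proposal is correct and matches the paper's own proof: both take the comparison groupoid to be the common apex itself with $u=v=\id$, so that the left triangle is trivially commutative and the right triangle is witnessed exactly by the hypothesized natural transformation $f\simT g$. The extra care you devote to verifying that $\gamma$ is a genuine natural transformation in $\TrG$ is sound but is already built into the paper's definition of equivalence of equivariant maps.
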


\begin{proof} Just consider the essential equivalences $\eta$ and $\nu$ as identity maps and the following diagram is commutative up to natural transformations since $ f\sim  g$
$$ 
\xymatrix{ &
{G\ltimes X}\ar[dr]^{f}="0" \ar[dl]_{\id }="2"&\\
{G\ltimes X}&{G\ltimes X} \ar[u]_{\id} \ar[d]^{\id}&{H\ltimes Y}\\
&{G\ltimes X}\ar[ul]^{\id}="3" \ar[ur]_{g}="1"&
\ar@{}"0";"1"|(.4){\,}="7"
\ar@{}"0";"1"|(.6){\,}="8"
\ar@{}"7" ;"8"_{\sim}
\ar@{}"2";"3"|(.4){\,}="5"
\ar@{}"2";"3"|(.6){\,}="6"
\ar@{}"5" ;"6"^{\sim}
}$$
\end{proof}

\begin{prop}\label{2iso}
If two strict maps $ f :G\ltimes X\to H\ltimes Y $ and $ g :G\ltimes X\to H\ltimes Y $ are $2$-isomorphic, then there exists an essential equivalence $\eta :\cL\to G\ltimes X $ such that $f\eta\sim g \eta$.
\end{prop}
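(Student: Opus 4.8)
The plan is to extract $\eta$ directly from the data witnessing the $2$-isomorphism, so that the proposition becomes a one-line diagram chase. First I would regard the strict maps $f,g\colon G\ltimes X\to H\ltimes Y$ as equivariant generalized maps with identity left leg, namely $G\ltimes X\overset{\id}{\gets}G\ltimes X\overset{f}{\to}H\ltimes Y$ and $G\ltimes X\overset{\id}{\gets}G\ltimes X\overset{g}{\to}H\ltimes Y$. By the description of $2$-morphisms in $\MTrG$, saying that $f$ and $g$ are $2$-isomorphic means that there is a translation groupoid $\cL$ together with equivariant essential equivalences $u,v\colon\cL\to G\ltimes X$ and natural transformations making the comparison square commute; unwinding this, the left-leg condition reads $\id\circ u\simT\id\circ v$, that is $u\simT v$, and the right-leg condition reads $f\circ u\simT g\circ v$.

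Next I would simply set $\eta:=u\colon\cL\to G\ltimes X$, which is an equivariant essential equivalence by hypothesis, so it only remains to produce a natural transformation $f\eta\sim g\eta$. For this I would post-compose the natural transformation $u\Rightarrow v$ with the strict functor $g$: since $\TrG$ is a $2$-category, whiskering this natural transformation by $\id_g$ yields a natural transformation $g\circ u\Rightarrow g\circ v$, hence $gu\simT gv$. Because $\simT$ is an equivalence relation on parallel equivariant maps (inversion and vertical composition of natural transformations are continuous, as recorded in Section 2), I can then chain $f\eta=fu\simT gv\simT gu=g\eta$, which is exactly the assertion.

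I do not expect a genuine obstacle: the content of the proposition is the observation that one of the two legs appearing in the $2$-morphism data already is the essential equivalence we want. The one point worth stating explicitly is the compatibility of $\simT$ with the bicategorical structure — that post-composing a natural transformation between equivariant maps with a strict equivariant functor is again such a natural transformation, and that the relation so obtained is symmetric and transitive — which is precisely the statement that $\TrG$ is a $2$-category in which inversion is continuous. Once that is in hand the proof is the two-line computation above, and it can be presented together with the commuting diagram already used in the proof of Proposition~\ref{naturaltransformationimpliesiso}, with $\id$ replaced by $u$ on the left and by $v$ on the lower-left.
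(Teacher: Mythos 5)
Your proposal is correct and follows essentially the same route as the paper: unwind the $2$-isomorphism into $u\sim v$ and $fu\sim gv$, take $\eta=u$, and chain $f\eta\sim gv\sim g\eta$. You are in fact slightly more explicit than the paper's proof, which passes from $\eta\sim\nu$ to $g\eta\sim g\nu$ without naming the whiskering step.
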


\begin{proof} We have that there exist essential equivalences $\eta, \nu$ such that the following diagram
$$ 
\xymatrix{ &
{G\ltimes X}\ar[dr]^{f}="0" \ar[dl]_{\id }="2"&\\
{G\ltimes X}&{\cL} \ar[u]_{\eta} \ar[d]^{\nu}&{H\ltimes Y}\\
&{G\ltimes X}\ar[ul]^{\id}="3" \ar[ur]_{g}="1"&
\ar@{}"0";"1"|(.4){\,}="7"
\ar@{}"0";"1"|(.6){\,}="8"
\ar@{}"7" ;"8"_{\sim}
\ar@{}"2";"3"|(.4){\,}="5"
\ar@{}"2";"3"|(.6){\,}="6"
\ar@{}"5" ;"6"^{\sim}
}$$
commutes up to natural transformation. That is $\eta\sim \nu$ and $f \eta\sim g \nu$. Therefore, $f \eta\sim g \eta$.
\end{proof}

\begin{prop}
If  $(\e, f )\Rightarrow  (\sigma, g ) $, then there exist essential equivalences $\nu$ and $\eta$  such that $f \nu\Rightarrow g \eta$.

\end{prop}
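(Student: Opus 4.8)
The plan is simply to unwind the definition of the $2$-morphism $(\e, f)\Rightarrow(\sigma, g)$ and then invoke Proposition~\ref{naturaltransformationimpliesiso}. Write the two generalized maps as $\cK\overset{\e}{\gets}\cJ\overset{f}{\to}\G$ and $\cK\overset{\sigma}{\gets}\cJ'\overset{g}{\to}\G$. By the description of $2$-morphisms in $\MTrG$ recalled above, the hypothesis $(\e,f)\Rightarrow(\sigma,g)$ means that there exist a (translation) groupoid $\cL$ and equivariant essential equivalences $u\colon\cL\to\cJ$ and $v\colon\cL\to\cJ'$, together with natural transformations exhibiting $\e u\sim\sigma v$ and $f u\sim g v$.

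First I would set $\nu:=u$ and $\eta:=v$; these are essential equivalences by the very definition of a representing diagram for a $2$-morphism. The relation $f u\sim g v$ then reads $f\nu\sim g\eta$, i.e.\ the equivariant maps $f\nu,\,g\eta\colon\cL\to\G$ are equivalent by a natural transformation. Applying Proposition~\ref{naturaltransformationimpliesiso} to this pair yields $f\nu\Rightarrow g\eta$ as generalized equivariant maps, which is exactly the desired conclusion. (The companion relation $\e\nu\sim\sigma\eta$ records that $\nu$ and $\eta$ are moreover compatible over $\cK$, but it is not needed for the statement as phrased; one could include it if one wanted to package the output as a genuine comparison of the two spans over $\cL$.)

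The argument carries no real difficulty; the only point that needs a little care is bookkeeping, namely extracting a single representing diagram of the displayed shape out of the \emph{class} of diagrams that constitutes a $2$-morphism, and checking that the legs $u,v$ one pulls out are indeed (equivariant) essential equivalences --- both of which are immediate from the definition. In short, this proposition is a repackaging of the definition of a $2$-morphism of generalized maps, upgraded through Proposition~\ref{naturaltransformationimpliesiso}; the ``main obstacle'', such as it is, is purely notational rather than mathematical.
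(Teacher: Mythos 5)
Your proposal is correct and follows the paper's own argument exactly: unwind the defining diagram of the $2$-morphism to extract the two essential equivalences with $f\nu\sim g\eta$, then apply Proposition~\ref{naturaltransformationimpliesiso}. The extra bookkeeping you supply (identifying $\nu,\eta$ with the legs $u,v$ of a representing diagram) is just a more explicit version of the same two-line proof.
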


\begin{proof} By definition of $2$-isomorphism, there are essential equivalences $\nu$ and $\eta$  such that $f \nu\sim g \eta$. The result follows from Proposition \ref{naturaltransformationimpliesiso}.

\end{proof}

\begin{prop}\label{2isoimplies2iso}
If  $f\Rightarrow g$, then   $(\e, f )\Rightarrow  (\sigma, g )$ for all essential equivalences $\e$, $\sigma$ with $\e\sim\sigma$.
\end{prop}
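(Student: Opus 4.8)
The plan is to manufacture, from the data $f\Rightarrow g$ and $\e\sim\sigma$, a single comparison diagram of the shape demanded by the definition of a $2$-morphism in $\MTrG$, using one and the same essential equivalence for both of its vertical legs. The first step is to feed the hypothesis $f\Rightarrow g$ into Proposition~\ref{2iso}: since $f$ and $g$ are strict maps that are $2$-isomorphic, there is an essential equivalence $\eta:\cL\to G\ltimes X$ together with a natural transformation realizing $f\eta\sim g\eta$. This $\eta$ will occupy the centre of the comparison diagram and will play the role of \emph{both} the upper and the lower vertical arrow.

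The second step is to upgrade the hypothesis $\e\sim\sigma$ to the statement $\e\eta\sim\sigma\eta$. This is a whiskering argument: if $T$ is the natural transformation witnessing $\e\sim\sigma$, then restricting $T$ along $\eta$ on objects, that is, $\ell\mapsto T(\eta(\ell))$ for $\ell$ an object of $\cL$, produces a map that still satisfies the naturality identity (inherited from that of $T$) and is still continuous; since inversion in a topological groupoid is continuous, the inverse transformation is available as well, so indeed $\e\eta\sim\sigma\eta$.

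Finally I would assemble the diagram
$$
\xymatrix{ &
{G\ltimes X}\ar[dr]^{f}="0" \ar[dl]_{\e }="2"&\\
{\cK}&{\cL} \ar[u]_{\eta} \ar[d]^{\eta}&{H\ltimes Y}\\
&{G\ltimes X}\ar[ul]^{\sigma}="3" \ar[ur]_{g}="1"&
\ar@{}"0";"1"|(.4){\,}="7"
\ar@{}"0";"1"|(.6){\,}="8"
\ar@{}"7" ;"8"_{\sim}
\ar@{}"2";"3"|(.4){\,}="5"
\ar@{}"2";"3"|(.6){\,}="6"
\ar@{}"5" ;"6"^{\sim}
}
$$
where $\cK$ is the common target of $\e$ and $\sigma$. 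Its lower-left region commutes up to the natural transformation $\e\eta\sim\sigma\eta$ of the second step, its outer region commutes up to $f\eta\sim g\eta$ of the first step, and $\eta$ is an equivariant essential equivalence; by the definition of $2$-morphism in $\MTrG$ this diagram is exactly a $2$-isomorphism $(\e,f)\Rightarrow(\sigma,g)$, which is what is claimed.

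I expect the main (really the only) point that needs care to be the second step: verifying that precomposition with the essential equivalence $\eta$ sends the relation $\sim$ to itself, i.e.\ that whiskering a continuous, invertible natural transformation by $\eta$ again yields one. The accompanying remark — that the definition of a $2$-isomorphism of generalized maps imposes no constraint forcing the two vertical essential equivalences to be different, so that taking both to be $\eta$ is permissible — is immediate, and the remaining verifications are routine diagram bookkeeping.
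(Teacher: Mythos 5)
Your argument is correct. The paper in fact states Proposition~\ref{2isoimplies2iso} without giving a proof, so there is nothing to compare against line by line; but your construction is sound and entirely in the spirit of the surrounding propositions. Each step checks out: Proposition~\ref{2iso} supplies a single essential equivalence $\eta$ with $f\eta\sim g\eta$; whiskering the continuous natural transformation $T$ witnessing $\e\sim\sigma$ along $\eta$ (i.e.\ $\ell\mapsto T(\eta_0(\ell))$) is continuous and inherits naturality from $T$ applied to the arrows $\eta(h)$, giving $\e\eta\sim\sigma\eta$; and the definition of a $2$-morphism in $\MTrG$ indeed places no restriction preventing the two vertical legs from coinciding. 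One small remark: the detour through Proposition~\ref{2iso} is not strictly necessary. Unwinding the hypothesis $f\Rightarrow g$ directly gives essential equivalences $u,v$ with $u\sim v$ and $fu\sim gv$; then $\e u\sim \e v\sim \sigma v$ (whiskering $u\sim v$ by $\e$ and $\e\sim\sigma$ by $v$, then composing the two transformations vertically), so the very same $u,v$ serve as the legs of the comparison diagram for $(\e,f)\Rightarrow(\sigma,g)$. Your version buys a symmetric diagram with a single $\eta$ at the cost of invoking an extra proposition; either route is acceptable, and the whiskering verification you flag as the delicate point is exactly the right thing to check.
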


\subsection{Diagonal map}
We will consider the pullback of the unique morphism $G\ltimes X\overset{c}{\to} {\bf{1}}$ with itself, where ${\bf{1}}$ is the terminal object in  $\MTG$. This pullback defines the product and then by the universal property we obtain the definition of the diagonal map. Then, the path groupoid will be a factorization of that diagonal.

\begin{defn}\cite{GJ}
An object $T$ in a bicategory $\B$ is terminal if the category $\B[C,T]$ is equivalent to the terminal category for every object $C$ in $\B$. A terminal object is unique up to equivalence when it exists.

\end{defn}

The trivial groupoid ${\bf{1}}=e\ltimes \bullet$ is the terminal object in the bicategory of translation groupoids $\MTrG$ since the category $\MTrG[G\ltimes X, {\bf{1}}]$  is equivalent to the category ${\bf{1}}$. Indeed, the objects in the category $\MTrG[G\ltimes X, {\bf{1}}]$ are generalized maps and the arrows are classes of diagrams. We can see that all objects are related by an arrow, i.e. $\MTrG[G\ltimes X, {\bf{1}}]$ is the pair groupoid. Given two generalized maps
$ G\ltimes X\overset{\e'}{\gets}G'\ltimes X'\overset{c'}{\to} {\bf{1}}$ and  $ G\ltimes X\overset{\e''}{\gets}G''\ltimes X''\overset{c''}{\to} {\bf{1}}$ we can see that they are equivalent:
$$
\xymatrix{ &
{G'\ltimes X'}\ar[dr]^{}="0" \ar[dl]_{\e' }="2"&\\
{G\ltimes X}&{P} \ar[u]_{} \ar[d]^{}&{{\bf{1}}}\\
&{G''\ltimes X''}\ar[ul]^{\e''}="3" \ar[ur]_{}="1"&
\ar@{}"0";"1"|(.4){\,}="7"
\ar@{}"0";"1"|(.6){\,}="8"
\ar@{}"7" ;"8"_{\sim_{}}
\ar@{}"2";"3"|(.4){\,}="5"
\ar@{}"2";"3"|(.6){\,}="6"
\ar@{}"5" ;"6"^{\sim}
}
$$
by considering $P$ as the pullback of $\e'$ and $\e''$. In particular, the strict constant map $G\ltimes X\overset{c}{\to} {\bf{1}}$ is the (unique up to 2-iso) map to the terminal object.

Let's now consider the pullback of this constant map with itself which defines the product:
$$
\xymatrix{
G \times G \ltimes (X \times X )  \ar[d]_{} \ar[r]_{}&{G\ltimes X}\ar[d]^{c}\\
{G\ltimes X} \ar[r]_{c}&{\bf{1}}
}
$$
The  product $(G\times G)\ltimes (X\times X)$ of the object $G\ltimes X$ with itself is unique up to equivalence. 

By the universal property of the pullback, we have that there exists a map $\Delta$ that makes the two triangles commutative up to natural transformation:

\begin{tikzcd}
G \ltimes X
\arrow[bend left]{drr}{\id}
\arrow[bend right]{ddr}{\id}
\arrow[dashed]{dr}{\Delta} & & \\
& (G\times G) \ltimes (X\times X) \arrow{r}{p_1} \arrow{d}{p_2}
& G\ltimes X \arrow{d}{c}   \\
& G\ltimes X  \arrow{r}{c} &{\bf{1}}
\end{tikzcd}

The map $\Delta: G \ltimes X\to (G\times G) \ltimes (X\times X)$ is the {\em diagonal map}. Its explicit definition on objects is $\Delta(x)=(x,x)$ and on arrows, $\Delta(g,x)=(g,g,x,x)$. The diagonal map is defined up to $2$-isomorphism. 
\begin{remark}
Note that the diagonal defined in \cite{Adem} is $2$-isomorphic to this one.
\end{remark}

\begin{defn} The {\em evaluation map} $\ev: G\ltimes X^I\to (G\times G)\ltimes (X\times X)$ is given by $\ev(g,\alpha)= (g,g,\alpha(0), \alpha(1))$.
\end{defn}
We have that the diagonal map factors through the path groupoid as expected.
\begin{prop}
There is a factorization of the diagonal map $\Delta$
$$
\xymatrix
{G\ltimes X \ar[dr]_{k} \ar[rr]^{\Delta\;\;\;\;\;\;\;\;\;}&& (G\times G) \ltimes (X\times X)\\
&G\ltimes X^I\ar[ru]_{e}&
}
$$
where $k$ and $e$ are generalized maps.
\end{prop}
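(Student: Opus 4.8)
The plan is to take $k$ to be the ``constant path'' inclusion and $e=\ev$ to be the evaluation map already introduced, and then to observe that the strict composite $\ev\circ k$ coincides literally with the diagonal $\Delta$; since $\Delta$ is only specified up to $2$-isomorphism, this will suffice.

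First I would define the strict equivariant map $k\co G\ltimes X\to G\ltimes X^I$ with identity group homomorphism, sending an object $x$ to the constant path $c_x\co I\to X$, $c_x(t)\equiv x$, and an arrow $(g,x)$ to $(g,c_x)$. This is equivariant since $g\cdot c_x=c_{gx}$, and continuous since $x\mapsto c_x$ is continuous $X\to X^I$ for the compact--open topology (the preimage of a subbasic set $\{f\mid f(K)\subseteq U\}$ is $U$). Likewise $e=\ev\co G\ltimes X^I\to(G\times G)\ltimes(X\times X)$, $\ev(g,\alpha)=(g,g,\alpha(0),\alpha(1))$, is a strict equivariant map whose group homomorphism is the diagonal $G\to G\times G$. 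Any strict equivariant map is an equivariant generalized map (take the left leg to be an identity essential equivalence), so both $k$ and $e$ are $1$-morphisms of $\MTrG$.

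Then I would compute: on objects $\ev\circ k(x)=\ev(c_x)=(c_x(0),c_x(1))=(x,x)$ and on arrows $\ev\circ k(g,x)=\ev(g,c_x)=(g,g,x,x)$, which is exactly the explicit description of $\Delta$ given above; hence $\ev\circ k=\Delta$ as strict maps. The one point needing care is the passage from this strict composite to the horizontal composite $e\circ k$ of generalized maps, which is formed via the weak pullback of $G\ltimes X\overset{k}{\to}G\ltimes X^I\overset{\id}{\gets}G\ltimes X^I$. I expect this to be the main (and only) obstacle. Since one of the two legs is an identity essential equivalence, this weak pullback is canonically equivalent to $G\ltimes X$ and the composite generalized map is $2$-isomorphic to the one represented by the strict map $\ev\circ k=\Delta$; equivalently, the canonical pseudofunctor $\TrG\to\MTrG$ of \cite{Pronk1996} preserves composition up to $2$-isomorphism. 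Either way the triangles in the statement commute up to natural transformations, and since the diagonal is defined only up to $2$-isomorphism this establishes the factorization.
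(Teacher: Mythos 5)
Your proposal is correct and follows essentially the same route as the paper: take $k$ to be the constant-path functor $x\rightsquigarrow\alpha_x$, take $e=\ev$, and observe that the composite agrees with $\Delta$ (the paper states this as an equivalence by natural transformation, while you verify the strict equality $\ev\circ k=\Delta$ directly). Your extra care about identifying the horizontal composite in $\MTrG$ (formed via the weak pullback along an identity essential equivalence) with the strict composite is a point the paper leaves implicit, but it is standard and does not change the argument.
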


\begin{proof}
 Let $k$ be the functor $G\ltimes X\to G\ltimes X^I$  given by  $x\rightsquigarrow  \alpha_x$ on objects, and $(g, x)\rightsquigarrow  (g, \alpha_x)$ where  $\alpha_x:I\to X$ is a constant path at $x\in X$ and let $e$ be the evaluation map, $e=\ev$.
 Then, we have that the composition  $e\circ c$ is equivalent by a natural transformation to the diagonal $\Delta$.
\end{proof}

\subsection{Homotopic maps}
We will give now an explicit  characterization  of the homotopy between generalized maps.
\begin{defn}
Two generalized maps  $K\ltimes Y\overset{\sigma}{\gets}K'\ltimes Y'\overset{f}{\to}G\ltimes X$  and  $K\ltimes Y\overset{\tau}{\gets}K''\ltimes Y''\overset{g}{\to}G\ltimes X$ are {\em homotopic} if there is a generalized map 
$K\ltimes Y\overset{\e}{\gets}\tilde K\ltimes \tilde Y\overset{H}{\to}G\ltimes X$ such that the following diagram commutes up to $2$-isomorphism

$$
\xymatrix{ G\ltimes X&&
{G \ltimes X^I}\ar[rr]^{\ev_{1}}="0" \ar[ll]_{\ev_0 }="2"&&G\ltimes X\\
&{K'\ltimes Y'}\ar[ul]^{f}\ar[dr]_{\sigma}&{\tilde K\ltimes \tilde Y} \ar[u]_{H} \ar[d]^{\e}&{K''\ltimes Y''}\ar[ur]_{g}\ar[dl]^{\tau}&\\
&&{K\ltimes Y}&&
}
$$

\end{defn}

This means that the generalized map $(\sigma, f)$ is isomorphic  to the generalized map $(\e, \ev_0\circ H)$ and $(\tau, g)$ is isomorphic  to  $(\e, \ev_1\circ H)$.

That is $(\sigma, f)$ is homotopic to $(\tau, g)$  if there exists $(\e,  H)$ and two  commutative diagrams up to natural transformations:

$$
\xymatrix{ &
{\tilde K\ltimes \tilde Y}\ar[dr]^{\ev_{0}H}="0" \ar[dl]_{\e }="2"&\\
{\cK\ltimes Y}&{\cL_0} \ar[u]_{u_0} \ar[d]^{v_0}&{G\ltimes X}\\
&{K'\ltimes Y'}\ar[ul]^{\sigma}="3" \ar[ur]_{f}="1"&
\ar@{}"0";"1"|(.4){\,}="7"
\ar@{}"0";"1"|(.6){\,}="8"
\ar@{}"7" ;"8"_{\sim}
\ar@{}"2";"3"|(.4){\,}="5"
\ar@{}"2";"3"|(.6){\,}="6"
\ar@{}"5" ;"6"^{\sim}
}\;\;\;\;\;\;\;
\xymatrix{ &
{\tilde K\ltimes \tilde Y}\ar[dr]^{\ev_{1}H}="0" \ar[dl]_{\e }="2"&\\
{\cK\ltimes Y}&{\cL_1} \ar[u]_{u_1} \ar[d]^{v_1}&{G\ltimes X}\\
&{K''\ltimes Y''}\ar[ul]^{\tau}="3" \ar[ur]_{g}="1"&
\ar@{}"0";"1"|(.4){\,}="7"
\ar@{}"0";"1"|(.6){\,}="8"
\ar@{}"7" ;"8"_{\sim_{}}
\ar@{}"2";"3"|(.4){\,}="5"
\ar@{}"2";"3"|(.6){\,}="6"
\ar@{}"5" ;"6"^{\sim}
}
$$
where $\cL_i$ is a translation groupoid, and $u_i$ and $v_i$ are equivariant essential  equivalences for $i=0,1$.
We will denote this homotopy between equivariant generalized maps by $\simeq$.

\begin{remark}
$(\sigma, f)\simeq (\tau, g)$  if $\exists (\e,  H)$ and essential equivalences $u_0, u_1, v_0, v_1$ such that 
$$f v_0 \sim \ev_0 H u_0\mbox{ and } g v_1 \sim \ev_1 H u_1$$
with $\e u_0 \sim \sigma v_0\mbox{ and } \e u_1 \sim \tau v_1$.
\end{remark}

\begin{prop}\label{isoimplieshomotopy} If $(\sigma, f)\Rightarrow (\tau, g)$, then $(\sigma, f)\simeq (\tau, g)$.
\end{prop}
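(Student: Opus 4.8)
The plan is to exhibit the homotopy $(\e, H)$ directly, using the constant-path trick to absorb the $2$-isomorphism $(\sigma,f)\Rightarrow(\tau,g)$. Recall that a $2$-isomorphism $(\sigma,f)\Rightarrow(\tau,g)$ unfolds, by the description in $\MTrG$, into a translation groupoid $\cL$ with equivariant essential equivalences $u\colon\cL\to K'\ltimes Y'$ and $v\colon\cL\to K''\ltimes Y''$ together with natural transformations $\sigma u\sim\tau v$ and $f u\sim g v$. The idea is that a homotopy from $(\sigma,f)$ to $(\tau,g)$ amounts to a path that is \emph{constant}: it should not move, since $(\sigma,f)$ and $(\tau,g)$ are already $2$-isomorphic.

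First I would construct $\tilde K\ltimes\tilde Y$. Take $\tilde K\ltimes\tilde Y=\cL$, with $\e\colon\cL\to K\ltimes Y$ defined as the common value $\sigma u\simeq\tau v$ (choose, say, $\e=\sigma u$, which is an equivariant essential equivalence as a composite of such). For the map $H\colon\cL\to G\ltimes X^I$, compose $f u\colon\cL\to G\ltimes X$ with the functor $k\colon G\ltimes X\to G\ltimes X^I$ sending a point to the constant path at it (the same $k$ used in the factorization of the diagonal, Proposition on the factorization of $\Delta$): set $H=k\circ(f u)$. Since $\ev_0\circ k\sim\id_{G\ltimes X}$ and $\ev_1\circ k\sim\id_{G\ltimes X}$ (both evaluations of a constant path return the point, up to the evident natural transformation), we get $\ev_0\circ H\sim f u$ and $\ev_1\circ H\sim f u\sim g v$.

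Next I would verify the two commuting squares of the homotopy definition. For the $\ev_0$ square, take $\cL_0=\cL$, $u_0=\id$, $v_0=u$; then $\e u_0=\e=\sigma u=\sigma v_0$ and $\ev_0 H u_0=\ev_0 H\sim f u=f v_0$, as required. For the $\ev_1$ square, take $\cL_1=\cL$, $u_1=\id$, $v_1=v$; then $\e u_1=\e=\sigma u\sim\tau v=\tau v_1$ (here we use the given natural transformation $\sigma u\sim\tau v$), and $\ev_1 H u_1=\ev_1 H\sim g v=g v_1$. All the maps involved are equivariant essential equivalences (identities, or the legs $u,v$ of the $2$-isomorphism), and $\cL$ is a translation groupoid, so both squares meet the requirements of the definition of $\simeq$.

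The only point needing a little care—and the main obstacle—is checking that the natural transformations assemble coherently, i.e. that the chosen $2$-isomorphisms in the two squares are compatible with the single choice of $(\e,H)$; this is where one must be careful that $\e=\sigma u$ is used consistently and that the transformation $\ev_1 k\sim\id$ is composed on the correct side. Once the bookkeeping of which natural transformation goes where is fixed (and it is forced by the shapes of the diagrams), everything is routine. This also explains conceptually why $2$-isomorphic maps are homotopic: the witnessing homotopy is the constant homotopy.
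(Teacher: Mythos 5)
Your proposal is correct and uses the same key idea as the paper's proof: take $H$ to be the constant-path inclusion composed with $f$, so that both evaluations recover $(\sigma,f)$ (exactly at $\ev_0$, and up to the given $2$-isomorphism at $\ev_1$). The paper streamlines the bookkeeping by taking the middle groupoid to be $K'\ltimes Y'$ itself with $\e=\sigma$ and $H=i_X\circ f$, rather than routing through the apex $\cL$ of the $2$-isomorphism as you do, but the two arguments are essentially identical.
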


\begin{proof}
Consider $H=i_X\circ f$ where $i_X$ is the inclusion of $X$ in $X^I$  given by  $i_X(x)=\alpha_x$ with $\alpha_x$ being the constant map $\alpha_x(t)=x$ for all $t\in I$. Then the following diagram is commutative up to $2$-isomorphism:
$$
\xymatrix{ G\ltimes X&&
{G \ltimes X^I}\ar[rr]^{\ev_{1}}="0" \ar[ll]_{\ev_0 }="2"&&G\ltimes X\\
&{K'\ltimes Y'}\ar[ul]^{f}\ar[dr]_{\sigma}&{ K'\ltimes Y'} \ar[u]_{H} \ar[d]^{\sigma}&{K''\ltimes Y''}\ar[ur]_{g}\ar[dl]^{\tau}&\\
&&{K\ltimes Y}&&
}
$$
The first triangle is an equality and the second one is commutative since $(\sigma, f)\Rightarrow (\tau, g)$.
\end{proof}

\begin{remark} Let $ f$ and $g$ be strict maps. Following the characterization for isomorphism of strict maps given in Proposition  \ref{2iso} and the definition of groupoid homotopy, we have that 
$f\simeq g$ if there exists a generalized map $(\e, H)$ and
essential equivalences $\eta$ and $\nu$ such that $f \e \eta \sim \ev_0 H \eta$ and $g \e\nu \sim \ev_1 H \nu$.
\end{remark}

\begin{prop} Let $ f$ and $g$ be strict maps.
\begin{enumerate}
\item If  $ f$ and $g$ are $\psi$-equivariantly homotopic maps,  then  $ f\simeq  g$ as generalized equivariant maps.

\item If  $f$ and $g$ are equivalent by a natural transformation, then  $ f\simeq  g$ as generalized equivariant maps.
\end{enumerate}
\end{prop}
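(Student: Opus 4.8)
The plan is to deduce part (2) formally from results already in hand, and to prove part (1) by building an explicit lift to the path groupoid. For part (2): if the strict maps $f$ and $g$ are equivalent by a natural transformation, then Proposition~\ref{naturaltransformationimpliesiso} yields $f\Rightarrow g$ as generalized equivariant maps, and Proposition~\ref{isoimplieshomotopy}, applied to the generalized maps $(\id,f)$ and $(\id,g)$, upgrades this to $f\simeq g$. Nothing further is required.

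For part (1), suppose $f,g\colon K\ltimes Y\to G\ltimes X$ are strict maps that are $\psi$-equivariantly homotopic; that is, both are equivariant over a homomorphism $\psi\colon K\to G$ and there is a continuous map $F\colon Y\times I\to X$ with $F(y,0)=f(y)$, $F(y,1)=g(y)$, and $F(ky,t)=\psi(k)F(y,t)$. The key construction is the adjoint map $H\colon K\ltimes Y\to G\ltimes X^I$, defined on objects by $H(y)=F(y,-)\in X^I$ and on arrows by $H(k,y)=(\psi(k),F(y,-))$. I would first check that $H$ is a well-defined equivariant morphism: continuity on objects is the exponential law (since $I$ is locally compact Hausdorff), the source condition is immediate, the target condition $t(H(k,y))=\psi(k)\cdot F(y,-)=F(ky,-)=H(ky)$ uses exactly the $\psi$-equivariance of $F$, compatibility with units and composition is routine, and the group component being $\psi$ makes $H$ equivariant over $\psi$. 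Composing with $\ev_i(g,\alpha)=(g,\alpha(i))$ then gives $\ev_0\circ H=f$ and $\ev_1\circ H=g$ on the nose. Taking $\tilde K\ltimes\tilde Y=K\ltimes Y$, $\e=\id$, $\sigma=\tau=\id$, and $u_i=v_i=\id$ in the definition of $\simeq$, both defining squares commute strictly, so $(\id,f)\simeq(\id,g)$, i.e.\ $f\simeq g$ as generalized equivariant maps.

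The only point that requires genuine care is the verification in part (1) that $H$ is a morphism of groupoids landing in $G\ltimes X^I$ — specifically that it respects the target map — and this is precisely where the hypothesis that the homotopy be equivariant over $\psi$ is used; everything else is formal, including the observation that since $f,g$ are strict all of $\sigma,\tau,u_i,v_i$ may be taken to be identities.
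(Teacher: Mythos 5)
Your proposal is correct and follows essentially the same route as the paper: part (2) is deduced exactly as in the paper from Propositions \ref{naturaltransformationimpliesiso} and \ref{isoimplieshomotopy}, and part (1) uses the same adjoint map $H\colon K\ltimes Y\to G\ltimes X^I$ of the $\psi$-equivariant homotopy, which makes both evaluation triangles commute strictly. You merely spell out the routine verifications (continuity via the exponential law, compatibility with the target map via $\psi$-equivariance) that the paper leaves implicit.
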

\begin{proof}
\begin{enumerate}
\item Let $H: Y\to X^I$ be the $\psi$-equivariant homotopy, i.e. $H_t(ky)=\psi(k)H_t(y)$. Then the following diagram is commutative:
$$
\xymatrix{ G\ltimes X&
{G \ltimes X^I}\ar[r]^{\ev_{1}}="0" \ar[l]_{\ev_0 }="2"&G\ltimes X\\
&{K\ltimes Y}\ar[ul]^{f}\ar[u]_{H}\ar[ur]_{g}&
}
$$

\item Follows from Proposition \ref{naturaltransformationimpliesiso} and Corollary \ref{isoimplieshomotopy}.
\end{enumerate}

\end{proof}
Therefore our definition of homotopy generalizes both the notion of natural transformation and the notion of equivariant homotopy.

\begin{prop} If $(\e, f)\simeq (\sigma, g)$ then there exist essential equivalences $a$ and $b$ such that $fa\simeq gb$ as strict maps.
\end{prop}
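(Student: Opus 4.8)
The plan is to unfold the definition of homotopy of generalized maps, extract from it the two intermediate resolutions it provides, and then glue them over a single weak pullback so that the homotopy becomes a diagram of honest strict maps sharing one domain. Write the two generalized maps as $K\ltimes Y\overset{\e}{\gets}K'\ltimes Y'\overset{f}{\to}G\ltimes X$ and $K\ltimes Y\overset{\sigma}{\gets}K''\ltimes Y''\overset{g}{\to}G\ltimes X$. By the definition of $\simeq$ there is an equivariant generalized map $K\ltimes Y\overset{\mu}{\gets}\tilde K\ltimes\tilde Y\overset{H}{\to}G\ltimes X^I$ together with $2$-isomorphisms $(\e,f)\Rightarrow(\mu,\ev_0 H)$ and $(\sigma,g)\Rightarrow(\mu,\ev_1 H)$. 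Unpacking the first via the explicit description of $2$-morphisms in $\MTrG$ produces a translation groupoid $\cL_0$ and equivariant essential equivalences $u_0\colon\cL_0\to K'\ltimes Y'$ and $v_0\colon\cL_0\to\tilde K\ltimes\tilde Y$ with $\e u_0\sim\mu v_0$ and $f u_0\sim\ev_0 H v_0$; unpacking the second produces $\cL_1$, $u_1\colon\cL_1\to K''\ltimes Y''$ and $v_1\colon\cL_1\to\tilde K\ltimes\tilde Y$ with $\sigma u_1\sim\mu v_1$ and $g u_1\sim\ev_1 H v_1$.

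Next I would form the weak pullback $\cM=\cL_0\times_{\tilde K\ltimes\tilde Y}\cL_1$, which is again a translation groupoid by the fibre-product proposition, with projections $\pi_0\colon\cM\to\cL_0$ and $\pi_1\colon\cM\to\cL_1$. Since essential equivalences are stable under weak pullback and under composition in the bicalculus of fractions of \cite{Pronk1996}, $\pi_0$ and $\pi_1$ are equivariant essential equivalences, and the pullback square gives $v_0\pi_0\sim v_1\pi_1$. I then set $a:=u_0\pi_0\colon\cM\to K'\ltimes Y'$ and $b:=u_1\pi_1\colon\cM\to K''\ltimes Y''$; as composites of equivariant essential equivalences these are equivariant essential equivalences, so $fa$ and $gb$ are strict maps with the common domain $\cM$.

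Finally I would exhibit the homotopy. Put $\tilde H:=H v_0\pi_0\colon\cM\to G\ltimes X^I$. Then $\ev_0\tilde H=\ev_0 H v_0\pi_0\sim (f u_0)\pi_0=fa$, and $\ev_1\tilde H=\ev_1 H v_0\pi_0\sim\ev_1 H v_1\pi_1\sim (g u_1)\pi_1=gb$, where the middle relation uses $v_0\pi_0\sim v_1\pi_1$ and hence $H v_0\pi_0\sim H v_1\pi_1$, and natural transformations are preserved under pre- and post-composition. Thus the triangle with top row $G\ltimes X\overset{\ev_0}{\gets}G\ltimes X^I\overset{\ev_1}{\to}G\ltimes X$ and apex $\cM$ carrying $fa$, $\tilde H$, $gb$ commutes up to natural transformation, which is exactly a homotopy $fa\simeq gb$ of strict maps (the instance of the homotopy definition in which all left legs are identities). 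The only non-formal point is the stability of essential equivalences under weak pullback and composition, but this is precisely what makes $\MTrG$ a bicategory of fractions and is already in force; once the resolutions $\cL_0$ and $\cL_1$ are merged over $\cM$, everything else is routine manipulation of the $2$-cells, so I do not expect a genuinely hard step.
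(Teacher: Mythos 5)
Your proof is correct, and it reaches the conclusion by a more careful route than the paper's own argument, although the underlying idea is the same. The paper unpacks the homotopy exactly as you do, obtaining $(\delta,H)$ and essential equivalences $u_0,v_0,u_1,v_1$ with $fv_0\sim \ev_0Hu_0$, $gv_1\sim \ev_1Hu_1$, $\delta u_0\sim \e v_0$, $\delta u_1\sim \sigma v_1$, and then simply sets $a=v_0(u_0)^{-1}\delta^{-1}$ and $b=v_1(u_1)^{-1}\delta^{-1}$, i.e.\ it composes with \emph{formal inverses} of essential equivalences in the bicategory of fractions; with that choice $a$ and $b$ are really generalized maps rather than strict ones, and the verification that $fa\simeq gb$ is left implicit. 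You instead merge the two resolutions $\cL_0$ and $\cL_1$ over the weak pullback $\cM=\cL_0\times_{\tilde K\ltimes\tilde Y}\cL_1$, using the stability of essential equivalences under weak pullback and composition (which the paper already relies on via \cite{Pronk1996} and the fibre-product proposition), so that $a=u_0\pi_0$ and $b=u_1\pi_1$ are honest strict equivariant essential equivalences with a common domain, and the homotopy $\tilde H=Hv_0\pi_0$ is exhibited explicitly, with the middle comparison $\ev_1Hv_0\pi_0\sim\ev_1Hv_1\pi_1$ coming from the $2$-cell $v_0\pi_0\sim v_1\pi_1$ of the pullback square. What your approach buys is that the statement's phrase ``as strict maps'' is actually honoured: $fa$ and $gb$ are strict maps out of a single translation groupoid and the homotopy is the identity-left-leg instance of the definition. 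What the paper's approach buys is brevity, at the cost of leaving the inverses and the final verification to the reader.
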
 

\begin{proof}
Since we have a homotopy between generalized maps, we know that there exists $(\delta, H)$ and essential equivalences $u_0, v_0, u_1, v_1$ such that $fv_0\sim \ev_0Hu_0, gv_1\sim \ev_1Hu_1$ and $\delta u_0\sim \e v_0$, $\delta u_1\sim \sigma v_1$. Take $a=v_0(u_0)^{-1}\delta^{-1}$ and $b=v_1(u_1)^{-1}\delta^{-1}$. We obtain that $fa$ and $gb$ are homotopic.
\end{proof}

\begin{prop}
The path groupoid $G\ltimes X^I$ is homotopy equivalent to the groupoid $G\ltimes X$. The evaluation 
$e_1 : G\ltimes X^I \to G\ltimes X $ is a homotopy equivalence.
\end{prop}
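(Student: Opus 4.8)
The plan is to exhibit the evaluation $e_1=\ev_1\colon G\ltimes X^I\to G\ltimes X$ together with the section $k\colon G\ltimes X\to G\ltimes X^I$ of the factorization of the diagonal above (the functor sending $x$ to the constant path $\alpha_x$ and $(g,x)$ to $(g,\alpha_x)$) as mutually inverse up to the homotopy relation $\simeq$. That is enough: it shows at once that $e_1$ is a homotopy equivalence and that $G\ltimes X^I$ is homotopy equivalent to $G\ltimes X$.

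First I would observe that one composite is strictly the identity. Indeed $e_1\circ k$ sends $x$ to $\alpha_x(1)=x$ on objects and $(g,x)$ to $(g,\alpha_x(1))=(g,x)$ on arrows, so $e_1\circ k=\id_{G\ltimes X}$, whence $e_1\circ k\simeq\id_{G\ltimes X}$. Then I would build a homotopy $k\circ e_1\simeq\id_{G\ltimes X^I}$ from the classical deformation of the path space onto its endpoint: for $s\in I$ put $H_s\colon X^I\to X^I$, $H_s(\alpha)(t)=\alpha\big((1-s)t+s\big)$, so that $H_0=\id_{X^I}$ and $H_1(\alpha)$ is the constant path at $\alpha(1)$, i.e.\ $H_1=k\circ e_1$. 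Since $G$ acts pointwise on $X^I$ we get $H_s(g\alpha)(t)=g\alpha\big((1-s)t+s\big)=g\,H_s(\alpha)(t)$ for every $s$, so the adjoint $\bar H\colon X^I\to(X^I)^I$, $\bar H(\alpha)(s)=H_s(\alpha)$, is $\id_G$-equivariant in the ordinary sense. Applying the proposition above that $\psi$-equivariantly homotopic strict maps are $\simeq$-equivalent as equivariant generalized maps, with $\psi=\id_G$, $f=\id_{G\ltimes X^I}$, $g=k\circ e_1$ and homotopy $\bar H$, I obtain $k\circ e_1\simeq\id_{G\ltimes X^I}$, and hence that $e_1$ and $k$ are inverse homotopy equivalences.

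The only step requiring real care is the point-set verification that $\bar H$ is continuous into the compact-open topology on $(X^I)^I$; this comes down to continuity of precomposition $X^I\times C(I,I)\to X^I$ evaluated along the continuous family $s\mapsto\bigl(t\mapsto(1-s)t+s\bigr)$, which holds because $I$ is locally compact Hausdorff, so no hypothesis on $X$ is needed. Everything past that point is formal: the hypotheses of the cited proposition hold verbatim, so no further diagram chasing is required, and being mutually inverse up to $\simeq$ is by definition a homotopy equivalence.
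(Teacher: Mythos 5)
Your proposal is correct and follows essentially the same route as the paper: the paper's proof exhibits exactly the same equivariant homotopy $H(\alpha)(t)(r)=\alpha(r+t-rt)=\alpha((1-t)r+t)$ (your $H_s$ with the roles of the two interval variables swapped) interpolating between $\id_{G\ltimes X^I}$ and the composite of $e_1$ with the constant-path inclusion, and invokes the same principle that equivariantly homotopic strict maps are homotopic as generalized maps. Your explicit verification that $e_1\circ k=\id$ strictly and the remark on continuity of the adjoint are details the paper leaves implicit, but they add nothing beyond the paper's argument.
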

\begin{proof}
Consider the map $H: G\ltimes X^I \to G\ltimes (X^I)^I$ such that $H(\alpha)=\lambda$ with $\lambda: I\to X^I, \lambda(t)=\alpha(r+t-rt)$. We have the following commutative  diagram:
$$
\xymatrix{ G\ltimes X^I&
{G \ltimes (X^I)^I}\ar[r]^{\ev_{1}}="0" \ar[l]_{\ev_0 }="2"&G\ltimes X^I\\
&{G \ltimes X^I}\ar[ul]^{\id}\ar[u]_{H}\ar[ur]_{i\circ e_1}&
}
$$
showing that  $i\circ e_1$ is homotopic to the identity map.
\end{proof}


\section{Fibrations}
We recall the definition of fibration for topological spaces given as a dualization of the notion of cofibration.
\begin{defn}\cite{May, Strom}
A map $p: E \to B$ is a fibration if for all spaces $U$ with $\ev_0 \circ K= p\circ k$ in the diagram  
$$
\begin{tikzcd}
U
\arrow[bend left]{drr}{K}
\arrow[bend right]{ddr}{k}
\arrow[dashed]{dr}{\tilde K} & & \\
& E^I \arrow{r}{p_{*}} \arrow{d}{\ev_0}
& B^I \arrow{d}{\ev_0}   \\
& E  \arrow{r}{p} & B
\end{tikzcd}
$$
there exists $\tilde K$ that makes the diagram commute.
\end{defn}

We want to introduce a notion of fibration for generalized maps. First, let's note that a strict equivariant map $\varphi\ltimes f :G\ltimes X\to H\ltimes Y $
induces a map $\varphi_*\ltimes f_* :G\ltimes X^I\to H\ltimes Y^I $ by $f_*(\alpha)=f\circ \alpha$ for all $\alpha \in X^I$ and $\varphi_*=\varphi$. We proved in section \ref{ess} that
if $ \e :G\ltimes X\to H\ltimes Y $ is an essential equivalence, then $\e_* :G\ltimes X^I\to H\ltimes Y^I $ is an essential equivalence as well.

Then we have that every generalized map $G\ltimes X\overset{\e}{\gets}G'\ltimes X'\overset{f}{\to}H\ltimes Y$ induces a generalized map $G\ltimes X^I\overset{\e_*}{\gets}G'\ltimes X'^I\overset{f_*}{\to}H\ltimes Y^I$ between the path groupoids.

\begin{defn}
A generalized map $G\ltimes X\overset{\e}{\gets}G'\ltimes X'\overset{f}{\to}H\ltimes Y$ is a {\em groupoid fibration} if for all translation groupoids $L\ltimes U$ with $\ev_0 \circ (\Omega, K)  \Rightarrow (\omega, k)\circ (\e, f)$ in the diagram  
$$
\begin{tikzcd}
L\ltimes U \\
& \tilde \cL 
\arrow[dashed]{dr}[description]{\tilde K} \arrow[dashed]{ul}[description]{\tilde \Omega} 
& \cL \arrow[bend left]{drr}{K}  \arrow[bend right]{ull}{\Omega}& & \\
&\ell \arrow[bend right]{dr}{k} \arrow[bend left]{uul}{\omega}&G\ltimes X^I  \arrow{d}{\ev_0}  & G'\ltimes X'^I \arrow{r}{f_{*}} \arrow{d}{\ev_0}\arrow{l}{\e_*}& H\ltimes Y^I  \arrow{d}{\ev_0}   \\
&&G\ltimes X& G'\ltimes X'  \arrow{l}{\e}\arrow{r}{f} & H\ltimes Y
\end{tikzcd}
$$
there exists $(\tilde \Omega, \tilde K)$ that makes the diagram commute up to $2$-isomorphism.
\end{defn}

Since a $2$-isomorphism between strict maps induces a $2$-isomorphism between the induced maps between their path groupoids, we have that being a fibration is a property invariant under $2$-isomorphism.

\begin{prop} Consider  $2$-isomorphic maps $ f :G\ltimes X\to H\ltimes Y $ and $ g :G\ltimes X\to H\ltimes Y $, $f\Rightarrow g$. Then $f$ is a fibration if and only if $g$ is a fibration.
\end{prop}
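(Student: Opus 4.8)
The plan is to exploit the fact that the fibration property for a strict map $f$ is phrased entirely in terms of lifting diagrams built out of path groupoids, evaluation maps, and $2$-isomorphisms, none of which see the difference between $f$ and a $2$-isomorphic map $g$. Concretely, suppose $f\Rightarrow g$ via essential equivalences, and suppose $f$ is a fibration. Given a lifting problem for $g$, i.e.\ translation groupoids and morphisms $\eta,\nu,\omega,\Omega,K,k$ with $\Omega\eta\sim\omega\nu$ and $\ev_0K\eta\sim g\,k\,\nu$, I would first convert it into a lifting problem for $f$. The key observation is Proposition~\ref{2iso}: since $f\Rightarrow g$ as strict maps, there is an essential equivalence $\zeta:\cL'\to G\ltimes X$ with $f\zeta\sim g\zeta$. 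Precomposing the whole square with $\zeta$ (and using that the induced maps on path groupoids of $2$-isomorphic maps are themselves $2$-isomorphic, which follows from functoriality of $P$ and the remark preceding this proposition) turns the data for $g$ into data for $f$, up to natural transformation.

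The main steps, in order, are: (1) record that $f\Rightarrow g$ gives, via Proposition~\ref{2iso}, an essential equivalence along which $f$ and $g$ become equivalent by a natural transformation, and that $f_*$ and $g_*$ on path groupoids inherit a $2$-isomorphism (here one uses Proposition~\ref{naturaltransformationimpliesiso} together with the naturality statement that $P(\varphi\ltimes f)$ respects $2$-isomorphisms); (2) given a lifting diagram witnessing a lifting problem for $g$, replace $k$ by its composite with this essential equivalence and absorb the natural transformation $f\zeta\sim g\zeta$ into the $2$-isomorphism $\ev_0\circ(\Omega,K)\Rightarrow(\omega,k)\circ g$, obtaining a genuine lifting problem for $f$; (3) apply the hypothesis that $f$ is a fibration to get a lift $(\tilde\Omega,\tilde K)$ making the two requisite triangles commute up to $2$-isomorphism; (4) transport this solution back along the essential equivalence (which is invertible as a generalized map in $\MTrG$) and re-insert the natural transformation $f\zeta\sim g\zeta$ to verify that the same $(\tilde\Omega,\tilde K)$, suitably recomposed, solves the original lifting problem for $g$. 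The converse direction is identical with the roles of $f$ and $g$ exchanged, since $f\Rightarrow g$ is a symmetric relation.

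I expect the main obstacle to be bookkeeping step (2) and (4): keeping track of which $2$-isomorphisms and natural transformations are being composed, and confirming that the various essential equivalences can be threaded through the spans without disturbing the ``up to $2$-isomorphism'' commutativity. The conceptual content is small---being a fibration is manifestly a property of the $2$-isomorphism class because the defining diagram only involves $\ev_0$, the functor $f\mapsto f_*$, and commutativity up to $2$-isomorphism, all of which are $2$-isomorphism invariant by the results already established (functoriality and naturality of the path groupoid, Propositions~\ref{naturaltransformationimpliesiso},~\ref{2iso}, and~\ref{2isoimplies2iso}). So the proof is essentially a diagram chase in $\MTrG$, and the honest difficulty is simply writing it without drowning in labels; I would likely present it by saying ``a lifting problem for $g$ is, after precomposition with the essential equivalence supplied by Proposition~\ref{2iso}, a lifting problem for $f$, and a solution transports back,'' and leave the explicit $2$-isomorphism manipulations to the reader or to a single displayed diagram.
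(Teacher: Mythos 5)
Your proposal is correct and takes essentially the same route as the paper, which in fact offers no written proof at all beyond the one-sentence remark immediately preceding the proposition (``a $2$-isomorphism between strict maps induces a $2$-isomorphism between their path groupoids,'' hence the lifting condition, being stated only up to $2$-isomorphism, cannot distinguish $f$ from $g$). Your plan --- transporting a lifting problem for $g$ into one for $f$ along the essential equivalence furnished by Proposition~\ref{2iso}, solving it, and transporting back --- is a faithful and somewhat more explicit elaboration of exactly that observation.
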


We will see that for  $(\e, f)$ to be a groupoid fibration it is necessary and sufficient that the right leg of the span is a groupoid fibration (considered as a generalized map with identity as a left leg).

\begin{prop}
 A generalized map $G\ltimes X\overset{\e}{\gets}G'\ltimes X'\overset{f}{\to}H\ltimes Y$ is a groupoid fibration if and only if $ f :G'\ltimes X'\to H\ltimes Y $ is a groupoid fibration.

\end{prop}

\begin{proof} If the generalized map $(\e, f)$ is a groupoid fibration, we have that there exists $(\tau', \tilde H')$ that makes the diagram commute up to $2$-isomorphism:
$$
\begin{tikzcd}
L\ltimes U & & \cL \arrow[]{ddrr}{H}\arrow[]{ll}{\sigma}\\
& \tilde \cL \arrow[]{dr}{\tilde H'} \arrow[]{ul}{\tau'} & P  \arrow[dashed]{dr}{\tilde H}\arrow[dashed]{ull}{\tau}&  \\
&&G\ltimes X^I  \arrow{d}{\ev_0}   &G'\ltimes {X'}^I\arrow{r}{f_{*}} \arrow{l}{\e_*} \arrow{d}{\ev_0}& H\ltimes Y^I  \arrow{d}{\ev_0}   \\
&\ell \arrow{r}{\e H_0} \arrow[bend right]{rr}{H_0}\arrow[]{uuul}{\sigma_0}&G\ltimes X & G'\ltimes {X'} \arrow{l}{\e}\arrow{r}{f}&    H\ltimes Y
\end{tikzcd}
$$
Let $P$ be the following pullback 
$$
\begin{tikzcd}
P\arrow{r}{\tilde H''} \arrow{d}{\e'_*} & G'\ltimes X'^I  \arrow{d}{\e_*}   \\
 \tilde \cL\arrow{r}{\tilde H'}&    G\ltimes X^I
\end{tikzcd}
$$
Take $\tau=\tau'\e'_*$ and $\tilde H= \tilde H''$. Then $ f :G'\ltimes X'\to H\ltimes Y$ is a groupoid fibration.

Conversely, if $f$ is a groupoid fibration then we have this commutative diagram
$$
\begin{tikzcd}
L\ltimes U & & \cL \arrow[]{ddrr}{H}\arrow[]{ll}{\sigma}\\
&   & P \arrow[dashed]{d}{\tilde H} \arrow[]{dr}{\tilde H'}\arrow{ull}{\tau}&  \\
&\ell\arrow{dr}{H_0}\arrow[]{uul}{\sigma_0}&G\ltimes X^I  \arrow{d}{\ev_0}   &G'\ltimes {X'}^I\arrow{r}{f_{*}} \arrow{l}{\e_*} \arrow{d}{\ev_0}& H\ltimes Y^I  \arrow{d}{\ev_0}   \\
&\ell'  \arrow[bend right]{rr}{H'_0}\arrow[bend left]{uuul}{\sigma'_0}&G\ltimes X & G'\ltimes {X'} \arrow{l}{\e}\arrow{r}{f}&    H\ltimes Y
\end{tikzcd}
$$
where $(\sigma'_0, H'_0)=\e\circ (\sigma_0, H_0)$. Now take
$\tilde H=\e_*\tilde H'$ and $\tau=\tau'$. Therefore, $(\e, f)$ is a fibration.
\end{proof}
Then, the test to decide if a generalized map is a groupoid fibration amounts to check the definition of groupoid fibration with a strict map. Moreover,
 we know that any generalized map
 $L\ltimes U\overset{}{\gets}{\ell}\overset{}{\to}G\ltimes X$
is equivalent to a generalized map of the form
$L\ltimes U\overset{}{\gets}L'\ltimes U'\overset{}{\to}G\ltimes X$
where $L'$  may be chosen as $L\times G$ and the group homomorphisms  are the appropriate projections onto $L$ and $G$ \cite{Pronk:2010}.

The groupoid fibration definition specializes to the following

\begin{defn} A strict map $f:G\ltimes X\to H\ltimes Y $ is a groupoid fibration if for all translation groupoids $L\ltimes U$ with $\ev_0 \circ \;(\Omega, K)  \Rightarrow  f \circ (\omega, k)$ in the diagram

$$
\begin{tikzcd}
L\ltimes U \\
& \tilde \cL 
\arrow[dashed]{dr}{\tilde K} \arrow[dashed]{ul}{\tilde \Omega} 
& L''\ltimes U'' \arrow[bend left]{dr}{K}  \arrow[bend right]{ull}{\Omega}&  \\
&L'\ltimes U' \arrow[bend right]{dr}{k} \arrow[bend left]{uul}{\omega}&G\ltimes X^I  \arrow{d}{\ev_0}  \arrow{r}{f_{*}} & H\ltimes Y^I  \arrow{d}{\ev_0}   \\
&&G\ltimes X \arrow{r}{f}&     H\ltimes Y
\end{tikzcd}
$$
there exists $(\tilde \Omega, \tilde K)$ that makes the diagram commute up to $2$-isomorphism.
\end{defn}

In other words, $f$ is a groupoid fibration if for all commutative diagrams

$$
\xymatrix{ &
{ (L'')\ltimes U''}\ar[dr]^{\ev_{0}\circ K}="0" \ar[dl]_{\Omega }="2"&\\
{L\ltimes U}&{} \ar[u]_{\eta} \ar[d]^{\nu}&{H\ltimes Y}\\
&{(L\times G)\ltimes U'}\ar[ul]^{\omega}="3" \ar[ur]_{f\circ k}="1"&
\ar@{}"0";"1"|(.4){\,}="7"
\ar@{}"0";"1"|(.6){\,}="8"
\ar@{}"7" ;"8"_{\sim}
\ar@{}"2";"3"|(.4){\,}="5"
\ar@{}"2";"3"|(.6){\,}="6"
\ar@{}"5" ;"6"^{\sim}
}$$

there exists $(\tilde\Omega, \tilde K)$ such that the following diagrams commute:

$$
\xymatrix{ &
{\tilde \cL}\ar[dr]^{\ev_{0}\tilde K}="0" \ar[dl]_{\tilde\Omega }="2"&\\
{L''}&{} \ar[u]_{\eta'} \ar[d]^{\nu'}&{G\ltimes X}\\
&{(L\times G)\ltimes U'}\ar[ul]^{\omega}="3" \ar[ur]_{ k}="1"&
\ar@{}"0";"1"|(.4){\,}="7"
\ar@{}"0";"1"|(.6){\,}="8"
\ar@{}"7" ;"8"_{\sim}
\ar@{}"2";"3"|(.4){\,}="5"
\ar@{}"2";"3"|(.6){\,}="6"
\ar@{}"5" ;"6"^{\sim}
}\;\;\;\;\;\;\;
\xymatrix{ &
{\tilde \cL}\ar[dr]^{f_*\circ \tilde K}="0" \ar[dl]_{\tilde \Omega }="2"&\\
{L\ltimes U}&{} \ar[u]_{\eta''} \ar[d]^{\nu''}&{H\ltimes Y^I}\\
&{(L\times H)\ltimes U''}\ar[ul]^{\Omega}="3" \ar[ur]_{K}="1"&
\ar@{}"0";"1"|(.4){\,}="7"
\ar@{}"0";"1"|(.6){\,}="8"
\ar@{}"7" ;"8"_{\sim_{}}
\ar@{}"2";"3"|(.4){\,}="5"
\ar@{}"2";"3"|(.6){\,}="6"
\ar@{}"5" ;"6"^{\sim}
}
$$

\begin{prop}
The evaluation map $ev_0 : G \ltimes X^I  \rightarrow G  \ltimes X$ is a groupoid fibration.
\end{prop}
\begin{proof}
For all translation groupoids $L\ltimes U$ making the following diagram commutative up to $2$-isomorphism, we will construct the required generalized map $(\tilde \Omega, \tilde K)$.
$$
\begin{tikzcd}
L\ltimes U \\
& \tilde \cL 
\arrow[dashed]{dr}{\tilde K} \arrow[dashed]{ul}{\tilde \Omega} 
& L''\ltimes U'' \arrow[bend left]{dr}{K}  \arrow[bend right]{ull}{\Omega}&  \\
&L'\ltimes U' \arrow[bend right]{dr}{k} \arrow[bend left]{uul}{\omega}&G\ltimes {(X^I)}^I \arrow{d}{}  \arrow{r}{} & G\ltimes X^I  \arrow{d}{\ev_0}   \\
&&G\ltimes X^I \arrow{r}{ev_0}&     G\ltimes X
\end{tikzcd}
$$
Since there is a $2$-isomorphism between the generalized maps $(\Omega,ev_0 K)$ and $(\omega,ev_0 k)$, we know that there exists a groupoid $\tilde \cL$ and essential equivalences $\nu$ and $\eta$ such that the following diagram commutes up to natural transformations

$$
\xymatrix{ &
{ L'' \ltimes U''}\ar[dr]^{\ev_{0}\circ K}="0" \ar[dl]_{\Omega }="2"&\\
{L\ltimes U}&{} \ar[u]_{\eta} \tilde \cL \ar[d]^{\nu}&{G\ltimes X}\\
&{L'\ltimes U'}\ar[ul]^{\omega}="3" \ar[ur]_{ev_0 \circ k}="1"&
\ar@{}"0";"1"|(.4){\,}="7"
\ar@{}"0";"1"|(.6){\,}="8"
\ar@{}"7" ;"8"_{\sim}
\ar@{}"2";"3"|(.4){\,}="5"
\ar@{}"2";"3"|(.6){\,}="6"
\ar@{}"5" ;"6"^{\sim}
}$$
we take $\tilde\Omega= \Omega\eta$ and will construct a map $\tilde K :\tilde \cL \rightarrow G\ltimes {(X^I)}^I  $ such that the following diagram commutes up to natural transformations
$$
\begin{tikzcd}
L\ltimes U \\
& \tilde \cL  \arrow{ul}{\tilde\Omega} \arrow{r}{\eta} \arrow{d}{\nu} \arrow[dashed]{dr}{\tilde K} 
& L''\ltimes U'' \arrow[bend left]{dr}{K}  \arrow[bend right]{ull}{\Omega}&  \\
&L'\ltimes U' \arrow[bend right]{dr}{k} \arrow[bend left]{uul}{\omega}& G \ltimes {(X^I)}^I \arrow{r}{}\arrow{d}{} & G\ltimes X^I  \arrow{d}{\ev_0}   \\
&&G\ltimes X^I \arrow{r}{ev_0}&     G\ltimes X
\end{tikzcd}
$$

Consider the following groupoid pullback 
$$
\begin{tikzcd}
 P\arrow{r}{\pi_1} \arrow{d}{\pi_2}& G\ltimes X^I  \arrow{d}{ev_0}   \\
 G\ltimes X^I \arrow{r}{ev_0}&    G\ltimes X
\end{tikzcd}
$$

where $P$ is the translation groupoid $$P=(G \times G)  \ltimes (X^I \times_{X} X^I\times_{X} G)$$  with  $X^I \times_{X} X^I\times_{X} G= \{ (\alpha_1,\alpha_2,k) | k\alpha_1(0) =  \alpha_2 (0) \}$. We observe that in fact $P$ is equivalent to $\mmm(I_{S_2},G \ltimes X)$. To show this equivalence, we construct first a functor ${\overline K}: P\to G \ltimes (X^I \times_X X^I)$, where $X^I \times_X X^I = X^{I \vee I}$ is the pullback of the diagram
$$
\begin{tikzcd}
 &  X^I  \arrow{d}{ev_0}   \\
  X^I \arrow{r}{ev_0}&     X
\end{tikzcd}
$$
given by ${\overline K} (\alpha_1,\alpha_2,k) ) = (k\alpha_1,\alpha_2)$ on objects 
and  ${\overline K}(g_1,g_2)=g_2 $ on morphisms.

Since $(g_1,g_2) \cdot (\alpha_1,\alpha_2,k ) = (g_1 \alpha_1, g_2 \alpha_2,g_2 k g_1^{-1})$ and $${\overline K} (g_1 \alpha_1, g_2 \alpha_2,g_2 k g_1^{-1}) = (g_2 k g_1^{-1} g_1 \alpha_1,g_2 \alpha_2) = ( g_2 k \alpha_1, g_2 \alpha_2) = g_2 (k \alpha_1, \alpha_2) $$ we can see that this is just a special case of the equivalences of the path groupoid models from section \ref{path}.

$$
\mmm (I_{S_2}, G \ltimes X) \cong P \sim G \ltimes X^{I \vee I} \cong G \ltimes X^I
$$

We observe that the following diagram of functors commutes up to natural transformations

$$
\xymatrix{G \ltimes X^I&
{G \ltimes X^{I \vee I}}\ar[r]^{j_{2}^*}="0" \ar[l]_{j_1^* }="2"& G \ltimes X^I\\
&{P}\ar[ul]^{\pi_1}\ar[u]_{{\overline K}}\ar[ur]_{\pi_2}&
}
$$
since the RHS commutes on the nose and the LHS commutes up to a natural transformation. Here 
$j_1 :  I \rightarrow I \vee I$  and $j_2: I \rightarrow I \vee I$ are the natural maps for the coproduct of pointed spaces:
$$
\xymatrix{ I\ar[dr]_{j_1} \ar[r]^{i_1 }& I \times I\ar[d]^{\pi} & I \ar[dl]^{j_2} \ar[l]_{i_{2}} \\
&{I \vee I}&
}
$$
where $i_1(t)=(t,0), \; i_2(s)=(0,s)$ and $\pi :  I \times I \rightarrow I \vee I$ is a deformation retract.
Therefore, we have the following commutative diagram:
\begin{equation}\label{diagram1}
\begin{tikzcd}
P \arrow[dashed]{dr}{\overline{K}} \arrow[bend left]{ddrrr}{\pi_1} \arrow[bend right]{dddrr}{\pi_2} \\
& G \ltimes X^{I \vee I}\arrow[bend left]{drr}{j_1^*} \arrow[bend right]{ddr}{j_2^*}  
\ar[dashed]{dr}{\pi^*}  
  &  \\
& & G \ltimes X^{I \times I} \arrow{d}{i_2^*}  \arrow{r}{i_1^*} &  G \ltimes X^I  \arrow{d}{\ev_0}   \\
&& G \ltimes X^I \arrow{r}{ev_0}&  G \ltimes    X
\end{tikzcd}
\end{equation}

Now, by the universal property of the weak pullback, there exists a functor $\xi: \cL\to P$ such that the following diagram commutes up to natural transformation:

\begin{equation}\label{diagram2}
\begin{tikzcd}
\arrow[bend left]{drr}{K\eta}
\arrow[bend right]{ddr}{k\nu}
\cL
\arrow{rd}{\xi}&  & \\
&  P \arrow{r} \arrow{d}& G\ltimes X^I \arrow{d}{\ev_0}  \\
&  G\ltimes X^I \arrow{r}{ev_0} & X
\end{tikzcd}
\end{equation}

Now, we put together diagrams \ref{diagram1} and \ref{diagram2}

$$
\begin{tikzcd}
 \tilde \cL  \arrow{r}{\eta}   \arrow{d}{\nu} \arrow[dashed]{dr}{\xi}  & L''\ltimes U'' \arrow[bend left]{dddrrr}{K} &  &   & \\
L'\ltimes U' \arrow[bend right]{dddrrr}{k}  &P \arrow[dashed]{dr}{\overline{K}} \arrow[bend left]{ddrrr}{\pi_1} \arrow[bend right]{dddrr}{\pi_2} \\
&& G \ltimes X^{I \vee I} \arrow[drr,bend left=30,"{j_1^*}" pos=0.2]
 \arrow[bend right]{ddr}{j_2^*}  
\ar[dashed]{dr}{\pi^*}  
&  &  \\
&& & G \ltimes X^{I \times I} \arrow{d}{i_2^*}  \arrow{r}{i_1^*} &  G \ltimes X^I  \arrow{d}{\ev_0}   \\
&&& G \ltimes X^I \arrow{r}{ev_0}&  G \ltimes    X
\end{tikzcd}
$$

and define $\tilde K= \pi^* \circ \overline{K} \circ \xi$.

\end{proof}
\section*{Acknowledgments}
The first author acknowledges and thanks the financial support provided by the Max Planck Institute for Mathematics and 
the grant P12.160422.004/01 -FAPA ANDRES ANGEL from Vicedecanatura de Investigaciones de la Facultad de Ciencias de la Universidad de los Andes, Colombia.

The second author is partially supported by the Simons Foundation (\#278333). Collaboration between the co-authors was possible thanks to this financial support.

Part of this paper was written during the visit of the second author to the Universidad de los Andes and during the visit of the first author to Wright College. They gratefully acknowledge the host institutions for their hospitality. 

Both authors thank Gustavo Granja and  Behrang Noohi for their helpful insights on an earlier version of this manuscript. Special thanks to Dorette Pronk for valuable discussions.

\bibliography{AngelColman}
\bibliographystyle{plain}
\end{document}